\definecolor{darkgreen}{rgb}{0, .5, 0}
\definecolor{darkgreen}{rgb}{0, .5, 0}
\definecolor{darkred}{rgb}{.5, 0, 0}
\numberwithin{equation}{section}
\theoremstyle{plain}
\numberwithin{thm}{section}
\newtheorem{prop}{Proposition}
\numberwithin{prop}{section}
\newtheorem{lm}{Lemma}
\numberwithin{lm}{section}
\numberwithin{term}{section}
\numberwithin{claim}{section}
\numberwithin{cor}{section}
\numberwithin{problem}{section}
\theoremstyle{definition} 
\numberwithin{definition}{section}
\numberwithin{example}{section}
\numberwithin{conjecture}{section}
\numberwithin{condition}{section}
\theoremstyle{remark} 
\newtheorem{remark}{Remark}
\numberwithin{remark}{section}
\numberwithin{remark}{section}
\begin{document}

\title{Directed Chain Stochastic Differential Equations}

\vspace{1cm} 

{
\author{
NILS  DETERING 
\thanks{~Department of Statistics and Applied Probability, South Hall, University of California, Santa Barbara, CA 93106, USA (E-mail:    {\it detering@pstat.ucsb.edu})}
\and JEAN-PIERRE FOUQUE 
\thanks{~Department of Statistics and Applied Probability, South Hall, University of California, Santa Barbara, CA 93106, USA (E-mail:    {\it fouque@pstat.ucsb.edu}). }
\and TOMOYUKI ICHIBA   
\thanks{~Department of Statistics and Applied Probability, South Hall, University of California, Santa Barbara, CA 93106, USA (E-mail:    {\it ichiba@pstat.ucsb.edu}). }
}
}

\date{June 22, 2019}

\maketitle

\begin{abstract}
{We propose a particle system of diffusion processes coupled through a chain-like network structure described by an infinite-dimensional, nonlinear stochastic differential equation of McKean-Vlasov type. It has both (i) a local chain interaction and (ii)  a mean-field interaction.  It  can be approximated by a limit of finite particle systems, as the number of particles goes to infinity. Due to the local chain interaction, propagation of chaos does not necessarily hold. Furthermore, we exhibit a dichotomy of presence or absence of mean-field interaction, and we discuss the problem of detecting its presence from the observation of a single component process. }

\end{abstract} 

\medskip 

{{\it Key Words and Phrases:} Interacting stochastic processes, stochastic equation with constraints, law of large numbers, particle system approximation, detecting mean-field.}

\medskip 

\noindent{\it AMS 2010 Subject Classifications:} Primary 60H10; secondary 60K35

\section{Introduction}
\label{sec: Chain} 

Let us consider a directed graph (or oriented network) of vertices $\, \{1, \ldots , n\}\,$ on a circle in the sense that each vertex $\,i\,$ in the graph is the head of an arrow directed from its neighboring vertex $\, i+1\,$ for $\, i \, =\, 1, \ldots , n-1\,$, and the boundary vertex $\, n\,$ is the head of an arrow directed from the first vertex $\, 1\,$. On some probability space with independent Brownian motions $\, (W_{\cdot, i})\, \, $, $\, 1\le i \le n\,$, assigned to the vertices, we consider  a process $\, X_{\cdot,i}\,$ defined by the following system of equations which incarnates this graph structure through drifts:  
\begin{equation} \label{eq: chain1}
\begin{split}
{\mathrm d} X_{t,i} \, & =\,  h(X_{t, i}, X_{t, i+1}) {\mathrm d} t + {\mathrm d} W_{t,i} \, ; \quad t \ge 0 \, , \quad i \, =\, 1, \ldots , n-1 \,,  
\\
{\mathrm d} X_{t,n} \, &=\,  h(X_{t, n}, X_{t, 1}) {\mathrm d} t + {\mathrm d} W_{t,n} \, . 
\end{split}
\end{equation} 
The initial values $\, X_{0,i}\,$ are independent and  identically distributed random variables, independent of $\, (W_{\cdot, i})\,$, $\, 1 \le i \le n\,$. Furthermore, $\,h: \mathbb R^{2} \to \mathbb R \,$ is a Lipschitz function. 

We view $\, (X_{\cdot, 1}, \ldots , X_{\cdot, n}) \,$ as a particle system interacting through this particular directed graph. The system is invariant under a shift of the indexes of the particles. In particular, the law of $\,X_{\cdot, i}\,$ is the same as the law of $\, X_{\cdot, 1}\,$ for every $\, i\,$ and also the joint law of $\, (X_{\cdot, i}, X_{\cdot, i+1})\,$ is the same as the joint law of $\, (X_{\cdot, 1}, X_{\cdot, 2}) \,$ for every $\,i \,$. Let us call such interaction in (\ref{eq: chain1})  a {\it directed chain} interaction. Note that if $\,h(x_{1}, x_{2}) \, =\,  x_{2}-x_{1}\,$, $\, (x_{1}, x_{2}) \in \mathbb R^{2}\,$, it is a simple Ornstein-Uhlenbeck type system (or a Gaussian cascade). Intuitively, because of the mean-reverting feature of Ornstein-Uhlenbeck type drifts, the particle $\,X_{\cdot, i}\,$ at vertex $\,i\,$ in (\ref{eq: chain1}) tends to be close to the neighboring particle $\, X_{\cdot, i+1}\,$ {\it locally} under this particular choice of function $\,h\,$. 

\smallskip 

For comparison, on the same probability space, we also consider a typical mean-field interacting system where each particle is attracted towards the mean, defined by 
\begin{equation} \label{eq: MF1}
{\mathrm d} X_{t,i} \, =\, \frac{\,1\,}{\,n\,}  \sum_{\stackrel{j=1}{j\neq i}}^{n} h \big( X_{t,i},  X_{t,j} \big) {\mathrm d} t + {\mathrm d} W_{t,i} \, ; \quad t \ge 0 \, , \quad i \, =\, 1, \ldots , n \, .
\end{equation} 
This system (\ref{eq: MF1}) is invariant under permutations of indexes of particles, while the system (\ref{eq: chain1}) only possesses the shift invariance.  
Again, if $\,h(x_{1}, x_{2}) \, =\,  x_{2}-x_{1}\,$, $\, (x_{1}, x_{2}) \in \mathbb R^{2}\,$, the particle $\,X_{\cdot , i}\,$ at node $\,i\,$ is {\it directly} attracted towards the mean $\, (X_{\cdot, 1} + \cdots + X_{\cdot, n})/n\,$ of the system. This type of mean-field model has been considered in {Carmona, Fouque \& Sun} (2015) as a Nash equilibrium of a stochastic game in the context of financial systemic risk. The drift in this system in contrast incarnates the structure of a complete graph. 

\bigskip 

\noindent {\bf Questions}. What is the essential difference between the system (\ref{eq: chain1}) and (\ref{eq: MF1}) for large $\,n\,$? Can we detect the type of interaction from the single particle behavior at a vertex? 

\bigskip 

To answer these questions,  let us fix $\, u \in [0, 1]\,$ and introduce a mixed system: 
\begin{equation} \label{eq: NDMF}
\begin{split}
{\mathrm d} X_{t,i} \, &=\,  \Big( u \cdot h (X_{t,i}, X_{t,i+1} ) + (1-u) \cdot \frac{\,1\,}{\,n\,} \!\! \sum_{j=1, j \neq i}^{n}  \!\!  h \big( X_{t,i}, X_{t,j}\big) \Big)  {\mathrm d} t + {\mathrm d} W_{t, i} \, ,  \, \\
{\mathrm d} X_{t,n} \, &=\,  \Big( u \cdot h (X_{t,n}, X_{t,1})  + (1-u) \cdot \frac{\,1\,}{\,n\,} \!\! \sum_{j=1, j \neq n}^{n} \!\!   h \big(  X_{t,n}, X_{t,j} \big) \Big) {\mathrm d} t + {\mathrm d} W_{t, n} 
\end{split}
\end{equation}
for $\, t \ge 0 \, $, $\, i \, =\,  1, \ldots , n-1\, $ with the initial random variables $\, X_{0, i}\,$, $\, 1\le i \le n\,$. If $\,u \, =\,  1\,$, (\ref{eq: NDMF}) becomes (\ref{eq: chain1}), while if $\, u \, =\, 0\,$, (\ref{eq: NDMF}) becomes (\ref{eq: MF1}). 

\medskip 

The motivation of our study is to understand in a first instance the effect of the graph (network) structure on the stochastic system of interacting diffusions. Interacting diffusions have been studied in various contexts: nonlinear {McKean-Vlasov} equations, propagation of chaos results, large deviation results, stochastic control problems in large infinite particle systems, and their applications to Probability and Mathematical Physics, and more recently to Mathematical Economics and Finance in the context of the mean-field games.  One of the advantages of introducing the mean-field dependence (\ref{eq: MF1}) and the corresponding limits, as $\,n \to \infty\,$, is to obtain a clear description of the complicated system, in terms of a representative particle, by the law of large numbers. As a result of the invariance under permutations of the indexes of particles, it often comes with the propagation of chaos, and then consequently the local dependence in the original system disappears in the limit. The single representative particle is characterized by a non-linear single equation, and the limiting distribution of many particles can be represented as a product measure. See Remark \ref{rem: PoC} in section \ref{sec: InfParticle} below for a short list of references and related research on propagation of chaos. 

\medskip 

Here, in contrast, by breaking the invariance under permutation of particles, we consider the limit of the system (\ref{eq: NDMF}) (or its slight generalization in the next section) as $\,n \to \infty \,$ and attempt to describe the presence of both, mean-field and local directed chain dependence in the interacting particles. In our directed chain dependence, conceptually there is a pair of representative particles in the limit: a particle (say $\,X_{\cdot}\,$) which corresponds to the {\it head} of an arrow and another particle (say $\, \widetilde{X}_{\cdot}\,$) which corresponds to the {\it tail} of the same arrow, i.e., the arrow directs from the particle $\,\widetilde{X}_{\cdot}\,$ to the particle $\, X_{\cdot}\,$. The marginal laws of $\, X_{\cdot}\,$ and $\, \widetilde{X}_{\cdot}\,$ are the same as a consequence of construction,  and the dynamics of $\, X_{\cdot}\,$ is determined by its law, its position, the position of $\,\widetilde{X}_{\cdot}\,$ and a Brownian noise $\, B_{\cdot}\,$. As a result, our stochastic equation for the representative pair $\, ( X_{\cdot}, \widetilde{X}_{\cdot})\,$ is described in the limit by a weak solution to a single non-linear equation with constraints on the marginal law of particles (see (\ref{eq: NDu})-(\ref{eq: INITIAL}) below). The limiting distribution of a collection of particles is not necessarily a product measure, unless $\, u \, =\, 0\,$. When $\, u \in (0, 1]\,$, because of the local chain dependence, the single non-linear equation (\ref{eq: NDu})-(\ref{eq: Fu}) with distributional constraints (\ref{eq: NDlaw})-(\ref{eq: INITIAL}) has an infinite-dimensional nesting structure (see Remarks \ref{rem: 1.2} and \ref{rem: 3.1} below). Moreover, when $\,u \in (0, 1]\,$, essentially because of the violation of permutation invariance, the stochastic chaos does not propagate (see Remark \ref{rem: nPoC}). To our knowledge, our approach provides the first such instance in the context of particle approximation of the solution to a nonlinear stochastic equation of McKean-Vlasov type. 

\medskip

In section \ref{sec: MKV} we discuss existence and uniqueness of the solution to a directed chain 
stochastic differential equation (\ref{eq: NDu})-(\ref{eq: Fu}) for a representative pair $\, (X_{\cdot}, \widetilde{X}_{\cdot})\,$ of interacting stochastic processes  with distributional constraints (\ref{eq: NDlaw})-(\ref{eq: INITIAL}). In section \ref{sec: InfParticle} we propose a particle approximation of the solution to (\ref{eq: NDu})-(\ref{eq: Fu}), we study the convergence of joint empirical measures (\ref{eq: empMeas}) and an integral equation (\ref{eq: IntEq}) with (\ref{eq: Ainfinty}) for the limiting joint distribution in Propositions \ref{prop: 1.4}-\ref{prop: 1.5}. Moreover, we provide a simple fluctuation estimate in Proposition \ref{prop: 2.0}. We will see that the joint law of adjacent two particles in the limit of interacting particle systems of type (\ref{eq: NDMF}), as $\,n \to \infty\,$, can be described by the solution of the directed chain stochastic equation (\ref{eq: NDu})-(\ref{eq: Fu}) under some assumptions. In section \ref{sec: CaseStudy}, coming back to the above questions, we discuss the detection of the mean-field interaction as a filtering problem along with the systems of  equations of Zakai and Kushner-Stratnovich type in Propositions \ref{prop: Zakai2}-\ref{prop: KushnerStrat}. Then, we describe a connection to the infinite-dimensional Ornstein-Uhlenbeck process, and consequently,  examine the corresponding Gaussian processes under presence or absence of the mean-field interaction in section \ref{sec: iOU}. 
The appendix includes some more technical proofs.

\section{Directed chain stochastic equation with mean-field interaction} \label{sec: MKV}
On a filtered probability space $\, (\Omega, \mathcal F, (\mathcal F_{t}), \mathbb P) \,$, given a constant $\, u \in [0, 1]\,$ and a measurable functional $\, b: [0, \infty) \times \mathbb R \times {\mathcal M}(\mathbb R) \to \mathbb R\,$, let us consider a non-linear diffusion pair $\, (X_{t}^{(u)}, \widetilde{X}_{t}^{(u)})\, $, $\, t \ge 0\, $, described by the stochastic differential equation 
\begin{equation} \label{eq: NDu}
{\mathrm d} X^{(u)}_{t} \, =\, b(t, X_{t}^{(u)}, \, F^{(u)}_{t} ) \, {\mathrm d} t + {\mathrm d} B_{t} \, ; \quad t \ge 0 \, , 
\end{equation}
driven by a Brownian motion $\,(B_{t}, t \ge 0 )\,$, where $\, F^{(u)}_{\cdot}\,$  is the weighted probability measure 
\begin{equation} \label{eq: Fu}
F^{(u)}_{t} (\cdot) \, :=\,  u \cdot  \delta_{ \widetilde{X}^{(u)}_{t}} (\cdot) + (1-u) \cdot  \mathcal L_{X^{(u)}_{t}} (\cdot) 
\end{equation}
of the Dirac measure $\, \delta_{ \widetilde{X}^{(u)}_{t}} (\cdot) \,$ of $\, \widetilde{X}^{(u)}_{t}\,$ and the law $\, \mathcal L_{X^{(u)}_{t}} \, =\, \text{Law} ( X^{(u)}_{t}) \,$ of $\, X^{(u)}_{t}\,$ with corresponding weights $\, (u, 1-u) \,$ for $\, t \ge 0\,$. We shall assume that the law of $\, X^{(u)}_{\cdot}\,$ is identical to that of $\, \widetilde{X}^{(u)}_{\cdot}\,$,  and $\, \widetilde{X}^{(u)}_{\cdot}\,$ is independent of the Brownian motion, i.e., 
\begin{equation} \label{eq: NDlaw}
\, \text{Law}({(X^{(u)}_{t}, \, t \, \ge \, 0)})  \, \equiv\,  \text{Law}({( \widetilde{X}^{(u)}_{t} , \, t \, \ge \, 0 )})  \quad \text{ and }  \quad 
{\bm \sigma } (\widetilde{X}^{(u)}_{t}, \, t \, \ge\,  0 ) \, \perp \!\!\! \perp \, {\bm \sigma} (B_{t}\, , \, t \ge 0 ) \, . 
\end{equation}
Let us also assume that the Brownian motion $\,B_{\cdot}\,$ is independent of the initial value $\, (X_{0}^{(u)}, \widetilde{X}_{0}^{(u)}) \,$. We assume the joint and marginal initial distributions  of $\,(X_{0}^{(u)}, \widetilde{X}_{0}^{(u)}) \,$ are  given and denoted by 
\begin{equation} \label{eq: INITIAL} 
\,\Theta \, :=\, \text{Law} ( X_{0}^{(u)}, \widetilde{X}_{0}^{(u)}) 
\, =\,  \text{Law} (X_{0}^{(u)}) \otimes \text{Law} ( \widetilde{X}_{0}^{(u)}) \, =\,  \theta^{\otimes 2} \, , \quad \theta \, :=\,  \text{Law} (X_{0}^{(u)}) \, \equiv\,  \text{Law} ( \widetilde{X}_{0}^{(u)})\,. 
\end{equation}

Here we assume $\, \widetilde{X}_{\cdot}^{(u)}\,$ is a copy of $\, X_{\cdot}^{(u)}\,$ which has the same law (\ref{eq: NDlaw}) as a random element in the space of continuous functions, however it is not necessarily independent of $\, X_{\cdot}^{(u)}\,$. They can be independent when $\, u \, =\, 0\,$, as in Remark \ref{rem: 1} below. Rather, we are interested in the joint law of the pair $\, (X_{\cdot}^{(u)}, \widetilde{X}_{\cdot}^{(u)})\, $ which satisfies (\ref{eq: NDu}) and is generated from Brownian motion(s)  in a non-linear way through their probability law for each $\, u \in [0, 1]\,$. The description (\ref{eq: NDu}) with the constraints (\ref{eq: Fu})-(\ref{eq: NDlaw}) has an infinite-dimensional feature, because of non-trivial dependence between the unknown continuous processes $\, \widetilde{X}^{(u)}_{\cdot}\,$ and $\, X^{(u)}_{\cdot}\,$ in the space of continuous functions for every $\, u \in (0, 1]\,$. For a  precise description of the infinite-dimensional nesting structure, see remark \ref{rem: 1.2} below. 

When $\,u \in (0, 1)\,$ we shall call (\ref{eq: NDu}) with (\ref{eq: Fu})-(\ref{eq: INITIAL}) a {\it nonlinear, directed chain stochastic equation with mean-field interaction}. Let us denote by $\, \mathcal M (\mathbb R)\,$ (and $\, \mathcal M ( C([0, T], \mathbb R)) \,$, respectively) the family of probability measures on $\, \mathbb R\,$ (and the space $\, C([0, T], \mathbb R) \,$ of continuous functions equipped with the uniform topology on compact sets, respectively). Our following existence and uniqueness result relies on some standard assumptions to simplify the presentation.

\bigskip 

\begin{prop} \label{prop: 1} 
Suppose that $\,b: [0, \infty) \times \mathbb R \times \mathcal M (\mathbb R) \to \mathbb R\,$ is Lipschitz, in the sense that there exists a measurable function $\, \widetilde{b}: [0, \infty) \times \mathbb R \times \mathbb R \,$ such that $\,b\,$ is represented as 
\begin{equation} \label{eq: Lin}
b(t, x, \mu) \, =\, \int_{\mathbb R} \widetilde{b}(t, x, y) \mu({\mathrm d} y) \, ; \quad t \in [0, \infty) \, , \, \, x \in \mathbb R \, , \, \, \mu \in \mathcal M (\mathbb R) \, , 
\end{equation}
and for every $\, T > 0 \,$ there exists a constant $\,C_{T} > 0\,$ such that 
\begin{equation} \label{eq: Lip}
\lvert \widetilde{b}(t, x_{1}, y_{1}) - \widetilde{b}(t, x_{2}, y_{2}) \rvert \, \le\,  C_{T} ( \lvert x_{1} - x_{2} \rvert + \lvert y_{1} - y_{2} \rvert ) \, ; \quad 0 \le t \le T \, .
\end{equation}
With the same constant $\,C_{T}\,$, let us also assume that $\, \widetilde{b}\,$ is of linear growth, i.e., \begin{equation} \label{eq: 1.2a}
\sup_{0 \le s \le T} \lvert \widetilde{b}(s, x, y) \rvert \le C_{T} (1 + \lvert x \rvert + \lvert y\rvert) \, ; \quad x, y \in \mathbb R \, . 
\end{equation}
Then,  for each $\, u \in [0, 1]\,$ there exists a weak solution $\, (\Omega, \mathcal F, (\mathcal F_{t}), \mathbb P)\,$, $\, (X_{\cdot}^{(u)}, \widetilde{X}^{(u)}_{\cdot}, B_{\cdot}) \,$ to the stochastic equation $(\ref{eq: NDu})$ with $(\ref{eq: Fu})$-$(\ref{eq: INITIAL})$. This solution is  unique in law. 
\end{prop}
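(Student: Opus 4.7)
The plan is to recast the constraint system as a fixed-point problem on the space of path-space probability measures and to apply a Picard iteration with Lipschitz/Gronwall control. The linearity assumption (\ref{eq: Lin}) is essential: it lets the drift in (\ref{eq: NDu}) be written as
\begin{equation*}
b\big(t, X_t^{(u)}, F_t^{(u)}\big) \,=\, u\,\widetilde b\big(t, X_t^{(u)}, \widetilde X_t^{(u)}\big) + (1-u)\int_{\mathbb R}\widetilde b\big(t, X_t^{(u)}, y\big)\,\mathcal L_{X_t^{(u)}}(dy),
\end{equation*}
separating a \emph{pathwise} dependence on $\widetilde X^{(u)}$ from a \emph{distributional} McKean--Vlasov dependence on $\mathrm{Law}(X^{(u)}_t)$, both of which are amenable to coupling arguments.

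Fix $T>0$ and let $\mathcal M_2:=\mathcal M_2\big(C([0,T];\mathbb R)\big)$ be the space of Borel probability measures $\mu$ on path space with $\int \sup_{t\le T}|\omega(t)|^2\,\mu(d\omega)<\infty$, equipped with the Wasserstein-$2$ distance $\mathcal W_2^T$ built from the uniform norm. Define $\Phi:\mathcal M_2\to\mathcal M_2$ as follows: on an auxiliary space carrying $X_0\sim\theta$, a Brownian motion $B$, and a path-valued variable $\widetilde X\sim\mu$ with $\widetilde X_0\sim\theta$, all three mutually independent, let $X$ be the unique strong solution of
\begin{equation*}
dX_t \,=\, \Big[u\,\widetilde b(t, X_t, \widetilde X_t) + (1-u)\!\int_{\mathbb R}\widetilde b(t, X_t, y)\,\mu_t(dy)\Big]\,dt + dB_t,
\end{equation*}
and set $\Phi(\mu):=\mathrm{Law}(X)$. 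Conditional on $\widetilde X$, the drift is Lipschitz in $x$ with constant $\le C_T$ and of linear growth by (\ref{eq: 1.2a}), so a classical Picard argument gives strong existence and (via Gronwall) $\Phi(\mu)\in\mathcal M_2$.

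To obtain a fixed point, run the Picard iteration $\mu^{(n+1)}:=\Phi(\mu^{(n)})$ using a common Brownian motion and initial value across iterates. Couple $\widetilde X^{(n)}\sim\mu^{(n)}$ and $\widetilde X^{(n-1)}\sim\mu^{(n-1)}$ optimally for $\mathcal W_2^T$ on path space, and use the Kantorovich--Rubinstein bound $\mathcal W_1\big(\mu^{(n)}_s,\mu^{(n-1)}_s\big)\le\mathcal W_2^T\big(\mu^{(n)},\mu^{(n-1)}\big)$. Since the Brownian parts cancel, the Lipschitz estimate (\ref{eq: Lip}) together with Cauchy--Schwarz yields
\begin{equation*}
\E\Big[\sup_{r\le t}\big|X^{(n+1)}_r-X^{(n)}_r\big|^2\Big] \,\le\, C'_T\!\int_0^t\!\E\Big[\sup_{r\le s}\big|X^{(n+1)}_r-X^{(n)}_r\big|^2\Big]ds + C'_T\,t\,\mathcal W_2^T\big(\mu^{(n)},\mu^{(n-1)}\big)^2,
\end{equation*}
and Gronwall then propagates this into the standard Picard bound of order $(C''_T T)^n/n!$, proving that $(\mu^{(n)})$ is Cauchy in $\mathcal M_2$; its limit $\mu^\star$ is the unique fixed point of $\Phi$.

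The weak solution is then obtained by taking $\widetilde X^{(u)}\sim\mu^\star$ independent of $(X_0,B)$ with initial marginals $\theta$, and letting $X^{(u)}$ be the strong solution of (\ref{eq: NDu}) driven by $B$; the identity $\Phi(\mu^\star)=\mu^\star$ gives $\mathrm{Law}(X^{(u)})=\mathrm{Law}(\widetilde X^{(u)})$ so (\ref{eq: NDlaw}) holds, and (\ref{eq: INITIAL}) holds by construction. Uniqueness in law follows since any weak solution induces a fixed point of $\Phi$ via the common path law, which must coincide with $\mu^\star$; the joint law of $(X^{(u)},\widetilde X^{(u)},B)$ is then pinned down by the distributional inputs together with the measurability of $X^{(u)}$ as a pathwise functional of $(X_0,B,\widetilde X^{(u)})$. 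I expect the main technical hurdle to be the contraction estimate itself: one needs a \emph{path-space} Wasserstein coupling (not merely marginal couplings) in order to dominate both the pointwise term $|\widetilde X^{(n)}_s-\widetilde X^{(n-1)}_s|$ and the marginal term $\mathcal W_1(\mu^{(n)}_s,\mu^{(n-1)}_s)$ by the single quantity $\mathcal W_2^T(\mu^{(n)},\mu^{(n-1)})$, and it is precisely the representation (\ref{eq: Lin}) that reduces the measure-dependence of $b$ to a Kantorovich--Rubinstein-type bound on $\widetilde b$.
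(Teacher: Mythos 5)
Your proposal takes essentially the same route as the paper: both define a map $\Phi$ on path-space probability measures that (i) takes a candidate law $\mathrm m$ for the tilde-process, (ii) solves the SDE driven by a fixed Brownian motion with drift split into a pathwise term in $\widetilde X$ and a distributional term in $\mathrm m_t$, (iii) returns $\text{Law}(X^{\mathrm m})$, and then establish a Gronwall-type contraction in a Wasserstein distance on path space, yielding a unique fixed point that satisfies the distributional constraint (\ref{eq: NDlaw}) automatically.

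The one genuine discrepancy is your choice of metric. You work in $\mathcal W_2^T$ on $\mathcal M_2(C([0,T];\mathbb R))$, which requires $\int \sup_{t\le T}\lvert\omega(t)\rvert^2\,\mu(d\omega)<\infty$ and hence a second-moment assumption on the initial law $\theta$ to even get $\Phi(\mu)\in\mathcal M_2$. Proposition~\ref{prop: 1} as stated imposes no moment condition on $\theta$ (that appears only later, in Proposition~\ref{prop: 1.2}). The paper sidesteps this by using the truncated metric $D_T$ of (\ref{eq: Wdist}), built from $\sup_s\lvert X_s(\omega_1)-X_s(\omega_2)\rvert\wedge 1$, under which $\mathcal M(C([0,T];\mathbb R))$ is complete with no integrability hypothesis, running the contraction first under the bounded-difference condition (\ref{eq: Lip-a}) and then relaxing to (\ref{eq: Lip}) by localizing in time. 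So either add the $L^2$ moment assumption on $\theta$ to your setup (which suffices for a correct but slightly narrower statement), or replace $\mathcal W_2^T$ by a bounded Wasserstein-type metric as the paper does. Apart from this, your coupling step is correct: coupling $\widetilde X^{(n)}$ and $\widetilde X^{(n-1)}$ on path space and using (\ref{eq: Lin}) to reduce the measure-dependence of $b$ to a one-dimensional Kantorovich bound on $\widetilde b$ is exactly the mechanism in (\ref{ineq: Wd0})--(\ref{ineq: Wd}).
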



\begin{proof}
First, observe that it is reduced to the well-known existence and uniqueness results of {McKean-Vlasov} equation, when $\,u \, =\,  0\,$. In particular, because of (\ref{eq: NDlaw}), in this case the joint distribution of $\, (X_{\cdot}^{(u)} , \widetilde{X}_{\cdot}^{(u)} ) \,$ is a product measure.
Thus let us fix $\,u \in (0, 1]\,$ in the following, and also assume boundedness of the drift coefficients for the moment, i.e.,  
\begin{equation} \label{eq: Lip-a}
\lvert \widetilde{b}(t, x_{1}, y_{1}) - \widetilde{b}(t, x_{1}, y_{2}) \rvert \, \le\,  C_{T} ( ( \lvert x_{1} - x_{2} \rvert + \lvert y_{1} - y_{2} \rvert) \wedge 1 ) \, ; \quad t \ge 0 \, , 
\end{equation}
in order to simplify our proof. 
We shall evaluate the Wasserstein distance $\,D_{T} (\mu_{1}, \mu_{2})\,$ between two probability measures $\, \mu_{1}\,$ and $\, \mu_{2}\,$ on the space $\, C([0, T], \mathbb R) \,$ of continuous functions, namely    
\begin{equation} \label{eq: Wdist}
D_{t} (\mu_{1}, \mu_{2}) \, :=\, \inf \Big \{ \int ( \sup_{0 \le s \le t} \lvert  X_{s}(\omega_{1}) - X_{s}(\omega_{2}) \rvert   \wedge 1 ) {\mathrm d} \mu( \omega_{1}, \omega_{2}) \Big\} \, 
\end{equation}
for $\, 0 \le t \le T\,$, where the infimum is taken over all the joint distributions $\, \mu \in \mathcal M ( C([0, T], \mathbb R) \times C([0, T], \mathbb R) ) \,$ such that their marginal distributions are $\, \mu_{1} \,$ and $\, \mu_{2}\,$, respectively, and the initial joint and marginal distributions are $\,\Theta\,$ and $\, \theta \,$ in (\ref{eq: INITIAL}), that is, $\, \mu\vert_{\{t=0\}} \, =\,  \Theta\,$, $\, \mu_{i} \vert_{\{t=0\}} \, =\, \theta\,$, 
\[
\,\text{Law} ( X_{s} (\omega_{i}) , 0 \le s \le T ) \, =\, \mu_{i}\, \quad \text{ for } \, \, \,i \, =\, 1, 2\, \text{ and } \, \text{Law} ( X_{s}(\omega_{1}), X_{s}(\omega_{2}), 0 \le s \le T ) \, =\, \mu \,.
\]
Here $\, X_{s}(\omega) \, =\, \omega(s)\,$, $\, 0 \le s \le T\,$ is the coordinate map of $\, \omega \in C([0, T], \mathbb R)\,$. $\,D_{T}(\cdot , \cdot) \,$ defines a complete metric on $\, \mathcal M ( C([0, T], \mathbb R)) \,$, which gives the topology of weak convergence to it. 

Given a probability measure $\,  \mathrm m \in \mathcal M(C([0, T], \mathbb R)) \,$ with initial law $\, \mathrm m_{0} \, :=\,  \theta\,$ in (\ref{eq: INITIAL}) and the canonical process $\, \widetilde{X}^{\mathrm m}_{\cdot}\,$ of the law $\, \mathrm m \,$ with initial value $\, \widetilde{X}_{0}^{\mathrm m} \, :=\,  \widetilde{X}_{0}^{(u)}\,$, and the initial variables $\, (X_{0}^{(u)}, \widetilde{X}_{0}^{(u)}) \,$ from (\ref{eq: INITIAL}), let us consider a map $\,\Phi : \mathcal M (C([0, T], \mathbb R) ) \mapsto \mathcal M (C([0, T], \mathbb R) )  \,$ such that  
\begin{equation} \label{eq: Phi} 
\, 
\Phi (\mathrm m) \, :=\, \text{Law} ( X^{\mathrm m}_{t}, 0 \le t \le T) \, , 
\end{equation} 
where on a given filtered probability space $\, (\Omega, \mathcal F, \mathbb P)\,$ with filtration $\, (\mathcal F_{t})_{t\ge 0}\,$, given a fixed Brownian motion $\, B_{\cdot}\,$ on it, $\, X^{\mathrm m}_{\cdot}\,$ is defined from a solution $\, (X^{\mathrm m}_{\cdot}, \widetilde{X}^{\mathrm m}_{\cdot}) \,$ of the stochastic differential equation 
\begin{equation} \label{eq: NDu0} 
{\mathrm d} X_{t}^{\mathrm m} \, =\,  b( t, X_{t}^{\mathrm m} , u \, \delta_{ \widetilde{X}^{\mathrm m}_{t}} + ( 1- u) \mathrm m_{t}) {\mathrm d} t + {\mathrm d} B_{t} \, ; \quad 0 \le t \le T \, ,  
\end{equation}
with the initial values $\, (X_{0}^{\mathrm m} , \widetilde{X}_{0}^{\mathrm m}) \, =\,  (X_{0}^{(u)}, \widetilde{X}_{0}^{(u)}) \,$. That is, under the probability measure $\,\mathbb P\,$, $\,X_{\cdot}^{\mathrm m}\,$ is an $\, (\mathcal F_{t})\,$-adapted process and  the associated $\, (\mathcal F_{t})\,$- adapted process $\, \widetilde{X}_{\cdot}^{\mathrm m} \,$ has the law 
\[
\, \mathrm m \, =\,   \text{Law} ( \widetilde{X}_{t}^{\mathrm m}, 0 \le t \le T ) \, \quad \text{ with } \quad \, \text{Law} ( X_{0}^{\mathrm m}) \, =\, \text{Law} ( \widetilde{X}_{0}^{\mathrm m}) \, =\, \theta \,. 
\]
Here $\, \mathrm m_{t}\,$ in (\ref{eq: NDu0}) is the marginal distribution of $\, \widetilde{X}^{\mathrm m}_{t}\,$ at time $\, t \ge 0\,$. Assume  $\, B_{\cdot}\,$ is independent of the $\,\sigma\,$-field $\,{\bm \sigma} ( \widetilde{X}^{\mathrm m}_{t}, 0 \le t \le T) \vee {\bm \sigma} ( X_{0}^{\mathrm m}) \,$.  

Thanks to the theory (e.g., {Karatzas \& Shreve} (1991)) of stochastic differential equation with Lipschitz condition (\ref{eq: Lip}) and the growth condition (\ref{eq: 1.2a}), a solution $\, X_{\cdot}^{\mathrm m}\, $ of (\ref{eq: NDu0}) exists, given the probability measure $\, \mathrm m \in \mathcal M ( C([0, T], \mathbb R)) \,$, the initial values with the initial law (\ref{eq: INITIAL}) and the associated canonical process $\, \widetilde{X}_{\cdot}^{\mathrm m}\,$ of the law $\, \mathrm m\,$. Hence, the map $\, \Phi\, $ is defined. Indeed, the solution $\, X_{\cdot}^{\mathrm m}\, $ in (\ref{eq: NDu0}) can be given as a functional of $\, \mathrm m\,$, $\, \widetilde{X}_{\cdot}^{\mathrm m}\,$ and $\, B_{\cdot}\,$, i.e., there exists a functional $\, {\bm \Phi} : [0, T] \times \mathcal M ( C([0, T], \mathbb R)) \times C([0, T], \mathbb R) \times C([0, T], \mathbb R) \times \mathbb R \to \mathbb R \,$ such that 
\begin{equation} \label{eq: Phi0}
X_{t}^{\mathrm m} \, =\,  {\bm \Phi} (t,  ({\mathrm m}_{\cdot}) , (\widetilde{X}_{\cdot}^{\mathrm m}),  (B_{\cdot}), X_{0}^{(u)}) \, ; \quad 0 \le t \le T \, , 
\end{equation}
where the value $\,X_{t}^{\mathrm m}\,$ at $\,t\,$ is determined by the initial value $\, X_{0}^{\mathrm m} \, =\,  X_{0}^{(u)}\,$ with the law $\, \theta\,$ and the restrictions $\, (\mathrm m_{s})_{0 \le s \le t}\,$, $\, ( \widetilde{X}_{s}^{\mathrm m})_{0 \le s \le t}\,$, $\, (B_{s})_{0 \le s \le t}\,$ of elements on $\, [0, t]\,$ for $\, 0 \le t \le T\,$. 
Note that here the filtration generated by $\, \widetilde{X}_{\cdot}^{\mathrm m}\,$ is not the Brownian filtration $\, (\mathcal F_{t}^{B})_{t\ge 0}\,$ generated by the fixed Brownian motion $\,B_{\cdot}\,$ but we assume it is independent of $\, (\mathcal F_{t}^{B})_{t \ge 0}\,$. Thus, we cannot expect the solution pair $\, (X^{\mathrm m}_{\cdot}, \widetilde{X}^{\mathrm m}_{\cdot}) \,$ to be a strong solution with respect to the filtration $\, (\mathcal F_{t}^{B})_{t \ge 0}\,$, in general.  

\smallskip 

We shall find a fixed point $\, m^{\ast}\,$ of this map $\, \Phi\,$ in (\ref{eq: Phi}), i.e., $\, \Phi (m^{\ast}) \, =\,  m^{\ast} \, $  
to show the uniqueness of solution to (\ref{eq: NDu}) with (\ref{eq: Fu})-(\ref{eq: NDlaw}) in the sense of probability law. 

\smallskip

For $\, \mathrm m_{i} \in \mathcal M( C([0, T], \mathbb R))\,$ with the intial law $\, \mathrm m_{i}\vert _{\{t= 0\}} = \theta \,$, on a filtered probability space $\, (\Omega, \mathcal F, \mathbb P) \,$ with filtration $\, (\mathcal F_{t})_{t \ge 0}\,$, a fixed Brownian motion $\,B_{\cdot}\,$ on it, the initial values $\, (X_{0}^{(u)}, \widetilde{X}_{0}^{(u)}) \,$ with the joint law $\,\Theta \,$ in (\ref{eq: INITIAL}), and the canonical process $\, \widetilde{X}_{\cdot}^{\mathrm m_{i}}\,$ with the initial value $\, \widetilde{X}_{0}^{\mathrm m_{i}} \, :=\, \widetilde{X}_{0}^{(u)}\,$, let us consider $\, \Phi (\mathrm m_{i}) \, =\,  \text{Law} (X_{t}^{\mathrm m_{i}}, 0  \le t \le T) \,$ in (\ref{eq: Phi}), where $\, (X_{\cdot}^{\mathrm m_{i}},  \, \widetilde{X}_{\cdot}^{\mathrm m_{i}})\,$ satisfies $\, \mathrm m_{i} \, =\, \text{Law} ( \widetilde{X}_{t}^{\mathrm m_{i}}, 0 \le t \le T) \,$ and 
\[
X_{t}^{\mathrm m_{i}} \, =\, X_{0}^{(u)} + \int^{t}_{0}b( s, X_{s}^{\mathrm m_{i}} , u \, \delta_{ \widetilde{X}^{\mathrm m_{i}}_{s}} + ( 1- u) \mathrm m_{i,s}) {\mathrm d} t + {\mathrm d} B_{t} \, ; \quad 0 \le t \le T \, ,\quad i \, =\,  1, 2 \, . 
\]

Then, by the form (\ref{eq: Lin}) of $\, b\,$ with the Lipschitz property (\ref{eq: Lip}) and the standard technique (see e.g., {Sznitman} (1991)) we obtain the estimates 
\begin{equation} \label{ineq: Wd0}
\begin{split}
\lvert X_{s}^{\mathrm m_{1}} - X_{s}^{\mathrm m_{2}} \rvert
& \le \int^{s}_{0} \lvert b(v, X_{v}^{\mathrm m_{1}}, u \delta_{ \widetilde{X}_{v}^{\mathrm m_{1}}} + (1-u) \mathrm m_{1, v} ) - b(v, X_{v}^{\mathrm m_{2}}, u \delta_{ \widetilde{X}_{v}^{\mathrm m_{2}}} + (1-u) \mathrm m_{2, v} ) \rvert  {\mathrm d} v  \\
\, & =\,  \int^{s}_{0} \Big \lvert \int_{\mathbb R} \widetilde{b} ( v, X_{v}^{\mathrm m_{1}}, y) ( u \delta_{ \widetilde{X}_{v}^{\mathrm m_{1}}} ({\mathrm d} y)  + (1-u) \mathrm m_{1, v} ( {\mathrm d} y ) )  \\
& \qquad  \qquad \qquad - \int_{\mathbb R} \widetilde{b} ( v, X_{v}^{\mathrm m_{2}}, y) ( u \delta_{ \widetilde{X}_{v}^{\mathrm m_{2}}} ({\mathrm d} y)  + (1-u) \mathrm m_{2, v} ( {\mathrm d} y )  ) \Big \rvert  {\mathrm d} v \\
& \le u \int^{s}_{0} \Big \lvert  \widetilde{b} ( v, X_{v}^{\mathrm m_{1}}, \widetilde{X}_{v}^{\mathrm m_{1}}) - \widetilde{b} ( v, X_{v}^{\mathrm m_{2}}, \widetilde{X}_{v}^{\mathrm m_{2}}) \Big \rvert {\mathrm d} v  \\
& \qquad {} + (1-u) \int^{s}_{0}  \Big \lvert \int_{\mathbb R} \widetilde{b} ( v, X_{v}^{\mathrm m_{1}}, y) \mathrm m_{1, v}( {\mathrm d} y) - \int_{\mathbb R} \widetilde{b} ( v, X_{v}^{\mathrm m_{2}}, y) \mathrm m_{2, v}( {\mathrm d} y)\Big \rvert {\mathrm d} v \, , 
\end{split}
\end{equation}
where we evaluate the convex combination of the first term 
\begin{equation}  \label{ineq: Wd1}
\int^{s}_{0} \Big \lvert  \widetilde{b} ( v, X_{v}^{\mathrm m_{1}}, \widetilde{X}_{v}^{\mathrm m_{1}}) - \widetilde{b} ( v, X_{v}^{\mathrm m_{2}}, \widetilde{X}_{v}^{\mathrm m_{2}}) \Big \rvert {\mathrm d} v  \le C_{T} \Big ( \int^{s}_{0} ( (\lvert X_{v}^{\mathrm m_{1}} - X_{v}^{\mathrm m_{2}} \rvert  + \lvert \widetilde{X}_{v}^{\mathrm m_{1}} - \widetilde{X}_{v}^{\mathrm m_{2}} \rvert) \wedge 1) {\mathrm d} v \Big) \, 
\end{equation}
for every $\, 0 \le s \le T\,$, and the second term with the integrand  
\begin{equation} \label{ineq: Wd}
\begin{split}
& \Big \lvert \int_{\mathbb R} \widetilde{b} ( v, X_{v}^{\mathrm m_{1}}, y) \mathrm m_{1, v}( {\mathrm d} y) - \int_{\mathbb R} \widetilde{b} ( v, X_{v}^{\mathrm m_{2}}, y) \mathrm m_{2, v}( {\mathrm d} y)\Big \rvert  \\
& \, \le\,   \Big \lvert \int_{\mathbb R} ( \widetilde{b} ( v, X_{v}^{\mathrm m_{1}}, y) - \widetilde{b} ( v, X_{v}^{\mathrm m_{2}}, y)) \mathrm m_{1, v}( {\mathrm d} y) \Big \rvert \\
& \qquad {} +  \Big \lvert  \int_{\mathbb R} \widetilde{b} ( v, X_{v}^{\mathrm m_{2}}, z) \mathrm m_{1, v}( {\mathrm d} z) - \int_{\mathbb R} \widetilde{b} ( v, X_{v}^{\mathrm m_{2}}, y) \mathrm m_{2, v}( {\mathrm d} y)\Big \rvert \\
& \, \le  \, C_{T} ( \lvert X_{v}^{\mathrm m_{1}} - X_{v}^{\mathrm m_{2}}\rvert \wedge 1 )  + C_{T} \, D_{v}( \mathrm m_{1}, \mathrm m_{2}) \, , 
\end{split}
\end{equation}
where $\,D_{v}(\mathrm m_{1}, \mathrm m_{2}) \,$ is the Wasserstein distance in (\ref{eq: Wdist}) between $\, \mathrm m_{1}\,$ and $\, \mathrm m_{2}\,$ in $\, [0, v]\,$ for $\, 0 \le v \le T\,$. Here note that in the last equality of (\ref{ineq: Wd}), we used (\ref{eq: Lip-a}) and an almost-sure inequality 
\[
\Big \lvert  \int_{\mathbb R} \widetilde{b} ( v, X_{v}^{\mathrm m_{2}}, z) \mathrm m_{1, v}( {\mathrm d} z) - \int_{\mathbb R} \widetilde{b} ( v, X_{v}^{\mathrm m_{2}}, y) \mathrm m_{2, v}( {\mathrm d} y)\Big \rvert 
\] 
\[
\, =\,  \Big \lvert  \mathbb E^{1,2} \big [ \widetilde{b} ( v, x, \omega_{1}) - \widetilde{b} ( v, x, \omega_{2})  \big ] \vert_{\{x = X_{v}^{\mathrm m_{2}}\}} \Big \rvert \, \le  \, C_{T} \mathbb E^{1,2} \big [ \lvert \omega_{1,v} - \omega_{2,v} \rvert \wedge 1 \big] \, , 
\]
where $\, \mathbb E^{1,2}\,$ is an expectation under a joint distribution of $\, (\omega_{1,v}, \omega_{2,v})\,$ (the value of $\, (\omega_{1}, \omega_{2}) \in \Omega_{2} \, :=\, C( [0, T], \mathbb R^{2}) \,$ at time $\,v\,$) with fixed marginals $\, \mathrm m_{1,v}\,$ and $\, \mathrm m_{2,v}\,$ for every $\, 0 \le v \le T\,$. Here, since the expectation on the left of $\leq$ only depends on the marginals, taking the infimum on the right of $\leq$ over all the joint distributions with fixed marginals $\, \mathrm m_{1,v}\,$ and $\, \mathrm m_{2,v}\,$, we obtained the last inequality in (\ref{ineq: Wd}) from    
\[
\Big \lvert  \int_{\mathbb R} \widetilde{b} ( v, X_{v}^{\mathrm m_{2}}, z) \mathrm m_{1, v}( {\mathrm d} z) - \int_{\mathbb R} \widetilde{b} ( v, X_{v}^{\mathrm m_{2}}, y) \mathrm m_{2, v}( {\mathrm d} y)\Big \rvert \le C_{T} D_{v} (\mathrm m_{1}, \mathrm m_{2}) \, ; \quad 0 \le v \le T \, . 
\]

Combining (\ref{ineq: Wd0})-(\ref{ineq: Wd}) and  taking the supremum over $\, s \in [0, t]\,$, we obtain 
\[
  \sup_{0 \le s \le t} \lvert X_{s}^{\mathrm m_{1}} - X_{s}^{\mathrm m_{2}} \rvert \wedge 1   \le C_{T} \int^{t}_{0} (\lvert X_{v}^{\mathrm m_{1}} - X_{v}^{\mathrm m_{2}} \rvert \wedge 1){\mathrm d} v +  C_{T}  \int^{t}_{0} (u ( \lvert \widetilde{X}_{v}^{\mathrm m_{1}} -  \widetilde{X}_{v}^{\mathrm m_{2}} \rvert \wedge 1) + (1-u) D_{v}(\mathrm m_{1}, \mathrm m_{2})) {\mathrm d} v  
\]
\[
\le C_{T} \int^{t}_{0} (\sup_{0\le s \le v}\lvert X_{s}^{\mathrm m_{1}} - X_{s}^{\mathrm m_{2}} \rvert \wedge 1){\mathrm d} v +  C_{T}  \int^{t}_{0} (u ( \lvert \widetilde{X}_{v}^{\mathrm m_{1}} -  \widetilde{X}_{v}^{\mathrm m_{2}} \rvert \wedge 1) + (1-u) D_{v}(\mathrm m_{1}, \mathrm m_{2})) {\mathrm d} v  
\]
\noindent for every $\, 0 \le t \le T\,$. Applying  {Gronwall}'s lemma, we obtain 
\[
  \sup_{0 \le s \le t} \lvert X_{s}^{\mathrm m_{1}} - X_{s}^{\mathrm m_{2}} \rvert \wedge 1   \le C_{T} e^{C_{T} T} \int^{t}_{0} (u ( \lvert \widetilde{X}_{v}^{\mathrm m_{1}} -  \widetilde{X}_{v}^{\mathrm m_{2}} \rvert \wedge 1) + (1-u) D_{v}(\mathrm m_{1}, \mathrm m_{2})) {\mathrm d} v 
\]
\[
 \le C_{T} e^{C_{T} T} \int^{t}_{0} (u ( \sup_{0 \le s \le v}\lvert \widetilde{X}_{s}^{\mathrm m_{1}} -  \widetilde{X}_{s}^{\mathrm m_{2}} \rvert \wedge 1) + (1-u) D_{v}(\mathrm m_{1}, \mathrm m_{2})) {\mathrm d} v 
\]
for every $\, 0 \le t \le T\,$. Taking expectations of both sides and taking the infimum over all the joint measures with marginals $\, (\mathrm m_{1}, \mathrm m_{2})\,$ and initial law $\, \Theta\,$ in (\ref{eq: INITIAL}), we obtain 
\begin{equation} \label{eq: est}
\begin{split}
D_{t}( \Phi (\mathrm m_{1}), \Phi (\mathrm m_{2})) \, & \le \, C_{T} e^{C_{T} T} \int^{t}_{0} (u D_{v}(\mathrm m_{1}, \mathrm m_{2}) + (1-u) D_{v}( \mathrm m_{1}, \mathrm m_{2})) {\mathrm d} v \\
& \qquad \qquad \, =\,  C_{T} e^{C_{T} T} \int^{t}_{0} D_{v}(\mathrm m_{1}, \mathrm m_{2}) {\mathrm d} v 
\end{split}
\end{equation}
for every $\, 0 \le t \le T\,$. Note that the upper bound in (\ref{eq: est}) is uniform over $\, u \in [0, 1]\,$. 

For every $\, \mathrm m \in C([0, T], \mathbb R)\,$ with initial marginal law $\, {\mathrm m}\vert_{\{t = 0\}} \, =\,  \theta\,$, iterating (\ref{eq: est}) and the map $\, \Phi\,$, $\,k \,$ times, we observe the inequality 
\begin{equation}
D_{T} ( \Phi^{(k+1)} (\mathrm m ) , \Phi^{(k)}(\mathrm m) ) \le \frac{\,(C_{T}Te^{C_{T} T})^{k}\,}{\,k!\,} \cdot D_{T} ( \Phi (\mathrm m), \mathrm m) \, ; \quad k \in \mathbb N_{0} \, , 
\end{equation}
and hence, we claim $\, \{\Phi ^{(k)}(\mathrm m) , k \in \mathbb N_{0}\} \,$ forms a Cauchy sequence converging to a fixed point $\, \mathrm m^{\ast} \, =\,  \Phi ( \mathrm m ^{\ast}) \,$ of $\, \Phi \,$ on $\, \mathcal M ( C([0, T], \mathbb R) )\,$. This fixed point $\, \mathrm m^{\ast} (\cdot) \, =\, \mathbb P ( X_{\cdot} \in \cdot) \,$ is a weak solution to (\ref{eq: NDu}) with (\ref{eq: Fu})-(\ref{eq: NDlaw}). It is unique in the sense of probability distribution. To relax the condition (\ref{eq: Lip-a}) and to show the result under the weaker condition (\ref{eq: Lip}), we divide the time interval $\,[0, T]\,$ into time-intervals of short length and establish the uniqueness in the short time intervals, and then piece the unique solution together to get the global uniqueness by the standard method.   
\end{proof}

\begin{prop}\label{prop: 1.2} 
{In addition to the assumptions required in Proposition~\ref{prop: 1}, let  $\, \mathbb E [ \lvert X_{0} \rvert] < \infty\,$.}
Then, the solution $\,(X_{\cdot}, \widetilde{X}_{\cdot})\,$, given in Proposition \ref{prop: 1}, satisfies for every $\, T > 0\,$ 
\begin{equation} \label{eq: 1.2b}
\mathbb E [ \sup_{0 \le t \le T} \lvert X_{t}\rvert ] \le \big( \mathbb E[ \lvert X_{0}\rvert ] + \mathbb E [ \sup_{0 \le s \le T} \lvert B_{s} \rvert ]  + C_{T}T\big ) e^{2C_{T} \, T} \,. 
\end{equation}
\end{prop}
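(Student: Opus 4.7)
The plan is to integrate the SDE (\ref{eq: NDu}), bound the drift using the linear growth condition (\ref{eq: 1.2a}), take suprema and expectations, exploit the distributional identity $\mathcal{L}_{X_t} \equiv \mathcal{L}_{\widetilde{X}_t}$ to collapse the mean-field and chain contributions into a single moment, and then close the estimate with Gronwall's lemma.

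First I would write
\[
X_t \, = \, X_0 + \int_0^t b(s, X_s, F_s^{(u)}) \, \mathrm d s + B_t, \qquad 0 \le t \le T,
\]
so that $\sup_{0\le s \le t} |X_s| \le |X_0| + \sup_{0\le s \le T}|B_s| + \int_0^t |b(s, X_s, F_s^{(u)})| \, \mathrm d s$. Using the representation (\ref{eq: Lin}) together with the linear growth (\ref{eq: 1.2a}) and the explicit form (\ref{eq: Fu}) of $F_s^{(u)}$, I get
\[
|b(s, X_s, F_s^{(u)})| \,\le\, C_T \int_{\mathbb R} (1 + |X_s| + |y|) F_s^{(u)}(\mathrm d y) \,=\, C_T \bigl(1 + |X_s| + u |\widetilde{X}_s| + (1-u)\, \mathbb E[|X_s|]\bigr).
\]

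Next I take expectations. Because of the law identity in (\ref{eq: NDlaw}) we have $\mathbb E [|\widetilde{X}_s|] = \mathbb E[|X_s|]$, so the bracketed term has expectation $1 + 2\mathbb E[|X_s|]$. Writing $\phi(t) := \mathbb E[\sup_{0\le s \le t}|X_s|]$ and using $\mathbb E[|X_s|] \le \phi(s)$, this yields
\[
\phi(t) \,\le\, \mathbb E[|X_0|] + \mathbb E\Bigl[\sup_{0\le s \le T}|B_s|\Bigr] + C_T T + 2C_T \int_0^t \phi(s) \, \mathrm d s, \qquad 0 \le t \le T.
\]
Applying Gronwall's lemma to $\phi$ gives the desired bound with factor $e^{2C_T T}$.

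The one technical point I would need to handle carefully is that Gronwall's lemma requires $\phi$ to be finite a priori. The standard fix is a localization: introduce $\tau_n := \inf\{t \ge 0 : |X_t| \ge n\} \wedge T$, run the same argument for the stopped process $X_{\cdot \wedge \tau_n}$ (whose supremum is bounded by $n$, hence integrable), obtain the bound with $\phi$ replaced by $\phi_n(t) := \mathbb E[\sup_{0\le s \le t}|X_{s \wedge \tau_n}|]$ and a right-hand side independent of $n$, and then pass to the limit $n \to \infty$ by monotone convergence, using $\tau_n \uparrow T$ almost surely, which follows from non-explosion of the weak solution constructed in Proposition~\ref{prop: 1}. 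This localization step is the only non-routine element; the rest is the standard McKean--Vlasov moment estimate adapted to the weighted measure $F_t^{(u)}$.
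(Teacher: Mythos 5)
Your proposal is correct and follows essentially the same route as the paper: integrate the SDE, bound the drift via the linear growth condition \eqref{eq: 1.2a} and the representation \eqref{eq: Lin}, use the law identity in \eqref{eq: NDlaw} to identify $\mathbb E[|\widetilde X_s|]$ with $\mathbb E[|X_s|]$ (which is how the $u$-dependence cancels and the factor $2C_T$ emerges), and close with Gronwall. The only addition relative to the paper's argument is your explicit localization step to guarantee a priori finiteness of $\phi$; the paper applies Gronwall directly and leaves that standard technicality implicit, so your version is marginally more careful but not a different proof.
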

\begin{proof} Suppose that $\, (X_{\cdot}, \widetilde{X}_{\cdot})\,$ is the solution to (\ref{eq: NDu}) with (\ref{eq: Fu})-(\ref{eq: NDlaw}) for a fixed $\, u \in [0, 1]\,$. Thanks to (\ref{eq: 1.2a}) and $\,\text{Law} (X_{t}) \, =\, \text{Law} ( \widetilde{X}_{t})\,$, $\, t \ge 0\,$, we have 
\[
\lvert b(s, X_{s}, F_{s}^{(u)}) \rvert \, =\,  \Big \lvert \int_{\mathbb R} \widetilde{b}(s, X_{s}, y ) {\mathrm d} F_{s}^{(u)}(y) \Big \rvert 
\le C_{T}\big ( 1 + u (\lvert X_{s} \rvert + \lvert \widetilde{X}_{s}\rvert) + (1-u) ( \lvert X_{s}\rvert + \mathbb E [ \lvert X_{s}\rvert ] ) \big ) \,  
\]
for $\,0 \le s \le T  \,$. Then we verify (\ref{eq: 1.2b}) by an application of {Gronwall}'s lemma to 
\[
\mathbb E [ \sup_{0\le s \le t} \lvert X_{s} \rvert] \le \mathbb E [ \lvert X_{0}\rvert ]  + \mathbb E [ \sup_{0 \le s \le t} \lvert B_{s}\rvert ]  + C_{T} \mathbb E \Big[ \int^{t}_{0} ( 1 + u  ( \lvert X_{s}\rvert + \lvert \widetilde{X}_{s} \rvert ) + (1 - u)  ( \lvert X_{s}\rvert + \mathbb E [ \lvert X_{s}\rvert ] ) ) {\mathrm d} s \Big] 
\]
\[
\le \mathbb E [ \lvert X_{0}\rvert ]  + \mathbb E [ \sup_{0 \le s \le t} \lvert B_{s}\rvert ]  + C_{T}T+ \int^{t}_{0} 2C_{T} \mathbb E [ \sup_{0 \le u \le s} \lvert X_{u}\rvert ] {\mathrm d} s 
\, ; \quad 0 \le t \le T .  
\]
\end{proof}

\begin{remark}[$\,L^{p}\,$ estimates] \label{rem: 2.4} We may extend  Proposition \ref{prop: 1.2} for the estimates of $\, \mathbb E [ \sup_{0 \le t \le T} \lvert X_{t}\rvert^{p}] \,$, assuming $\, \mathbb E [ \lvert X_{0}\rvert^{p} ] < + \infty \,$ for $\, p \ge 1 \,$. \hfill $\,\square\,$
\end{remark}

\begin{remark}[Extreme cases] \label{rem: 1}
In Proposition \ref{prop: 1} the processes $\, (X^{(u)}_{\cdot}, \widetilde{X}^{(u)}_{\cdot})\,$, $\, u \in [0, 1]\,$ form a class of diffusions which contains two {\it extreme} cases $\, u \, =\, 0, 1\,$: 

\medskip 

\noindent $\,\bullet\,$ When $\, u \, =\, 0\,$, we set $\, (X_{\cdot}^{\bullet}, \widetilde{X}_{\cdot}^{\bullet}) \, :=\, (X^{(0)}_{\cdot}, \widetilde{X}_{\cdot}^{(0)}) \,$ and distinguish it from other cases. $\,X_{\cdot}^{\bullet}\,$ satisfies a {McKean-Vlasov} diffusion equation 
\begin{equation} \label{eq: NDMK}
{\mathrm d} X^{\bullet}_{t} \, =\, b(t, X_{t}^{\bullet}, \, \mathcal L_{ X_{t}^{\bullet}} ) \, {\mathrm d} t + {\mathrm d} B_{t} \, ; \quad t \ge 0 \, , 
\end{equation}
and the corresponding copy $\, \widetilde{X}^{\bullet}_{\cdot}\,$ does not appear, that is, we may take $\, \widetilde{X}_{\cdot}^{\bullet} \,$ independent of $\, X_{\cdot}^{\bullet} \,$ because of the solvability of (\ref{eq: NDMK}) and the restriction (\ref{eq: NDlaw}). 

\medskip 

\noindent $\,\bullet\,$ When $\,u \, =\,  1\,$, we set $\, (X_{\cdot}^{\dagger}, \widetilde{X}_{\cdot}^{\dagger}) \, :=\, (X^{(1)}_{\cdot}, \widetilde{X}_{\cdot}^{(1)}) \,$. The pair satisfies a stochastic equation  
\begin{equation} \label{eq: NDC}
{\mathrm d} X^{\dagger}_{t} \, =\, b(t, X_{t}^{\dagger}, \, \delta_{ \widetilde{X}_{t}^{\dagger}} ) \, {\mathrm d} t + {\mathrm d} B_{t} \, ; \quad t \ge 0 \, ,  
\end{equation}
where $\,\widetilde{X}^{\dagger}_{\cdot}\,$ has the same law as $\, X^{\dagger}_{\cdot}\,$, independent of Brownian motion, i.e., $\, \text{Law}{(X^{\dagger}_{\cdot})}  \, =\, \text{Law}{( \widetilde{X}^{\dagger}_{\cdot})} \,$ and $\, {\bm \sigma} (\widetilde{X}^{\dagger}_{t}, \, t \, \ge\,  0 ) \, \perp \!\!\! \perp \, {\bm \sigma} (B_{t}\, , \, t \ge 0 ) \, \,$. The corresponding non-linear contribution from the law $\, \text{Law}({X^{\dagger}_{\cdot}})\,$ of $\,{X^{\dagger}_{\cdot}}\,$ disappears from (\ref{eq: NDC}). \hfill $\,\square\,$
\end{remark} 

\begin{remark}[Non-uniqueness]  
When $\,u \in (0, 1] \,$, it is simple to observe that the stochastic equation (\ref{eq: NDu}) with (\ref{eq: Fu}) but {\it without} the distributional constraints in (\ref{eq: NDlaw}) does not determine uniquely the joint law  of $\,(X_{\cdot}^{(u)}, \widetilde{X}^{(u)})\,$. For example, take a two-dimensional Brownian motion $\, (B_{\cdot}, W_{\cdot})\,$ and take $\, W_{\cdot}\,$ for $\, \widetilde{X}^{(u)}_{\cdot}\,$, i.e., $\, \widetilde{X}^{(u)}_{\cdot} \equiv W_{\cdot}\,$, then by the standard theory of stochastic differential equations we may construct a weak solution $\, (X^{(u)}_{\cdot}, \widetilde{X}^{(u)}) \,$ for (\ref{eq: NDu}) with (\ref{eq: Fu}), in addition to the solution in Proposition \ref{prop: 1}. In this case, if $\, B_{\cdot}\,$ and $\, W_{\cdot}\,$ are independent, then the independence condition $\, {\bm \sigma } (\widetilde{X}^{(u)}_{t} \equiv W_{t}, \, t \, \ge\,  0 ) \, \perp \!\!\! \perp \, {\bm \sigma} (B_{t}\, , \, t \ge 0 )\, $ holds but $\,\text{Law} ( X_{\cdot}^{(u)}) \, \neq \, \text{Law} ( \widetilde{X}_{\cdot}^{(u)}) \,$, in general. Thus the requirement (\ref{eq: NDlaw}) is crucial for the uniqueness of the solution.  
A recent work of {Bayraktar, Cosso \& Pham} (2018) introduces a pair of continuous stochastic processes coupled through the distribution of initial values without distributional constraints (\ref{eq: NDlaw}) for the study of randomized dynamic programming principle. 
 \hfill $\,\square\,$
\end{remark}

\begin{remark}[Russian nesting doll structure] \label{rem: 1.2} When $\, u \in (0, 1] \,$, since $\, \widetilde{X}_{\cdot}^{(u)}\,$ has the same law as $\, X_{\cdot}^{(u)}\,$ in (\ref{eq: NDlaw}), the dynamics of  $\, \widetilde{X}_{\cdot}\,$ is described by a similar equation as in (\ref{eq: NDu}), i.e., 
\begin{equation}
{\mathrm d} \widetilde{X}^{(u)}_{t} \, =\,  b( t, \widetilde{X}_{t}^{(u)}, \widetilde{F}_{t}^{(u)}) {\mathrm d} t + {\mathrm d} \widetilde{B}_{t} \, ; \quad t \ge 0 \, , 
\end{equation}
where $\,\widetilde{B}\,$ is another Brownian motion but $\, \widetilde{F}_{t}^{(u)}\,$ is defined from another (unknown) copy $\, \widehat{X}_{\cdot}\,$ of $\, X_{\cdot}\,$,
\begin{equation}
\, \widetilde{F}_{t}^{(u)} \, =\, u \cdot \delta_{ \widehat{X}_{t}^{(u)}} + ( 1- u) \cdot \mathcal L_{ \widetilde{X}_{t}^{(u)}} \, ; \quad t \ge 0 \, , 
\end{equation} 
with $\, \text{Law} (X_{\cdot}^{(u)}) \, =\,  \text{Law} ( \widetilde{X}_{\cdot}^{(u)}) \, =\,  \text{Law} ( \widehat{X}_{\cdot}^{(u)})  \,$, and $\, {\bm \sigma} ( \widehat{X}_{t}^{(u)}, t \ge 0 ) \perp \!\!\! \perp \, {\bm \sigma} ( \widetilde{B}_{t}\, , \, t \ge 0 ) \, $. Thus it follows from Proposition \ref{prop: 1} that the dynamics of $\, \widetilde{X}_{\cdot}^{(u)}\,$ depends on $\, \widehat{X}^{(u)}_{\cdot}\,$ and $\, \widetilde{B}_{\cdot}\,$. 

Repeating this argument, we see that the dynamics of $\, \widehat{X}^{(u)}_{\cdot}\,$ may depend on yet another copy and a Brownian motion, and then another copy and a Brownian motion, and so on. This dependence continues, and thus the dynamics of $\, X_{\cdot}^{(u)}\,$ may  depend on the dynamics of infinitely many copies, as if we open  infinitely many layers of Russian nesting doll  ``matryoshka''. Thus when $\,u \in (0, 1] \,$, the infinite-dimensional nesting structure naturally arises in the system (\ref{eq: NDu})-(\ref{eq: Fu}).  
\hfill $\,\square\,$
\end{remark}





\begin{remark} [Generalization and Application] The set-up and conditions on the tamed drift function $\, b \,$ in (\ref{eq: NDu}) can certainly be generalized and relaxed. Also, a Lipschitz continuous diffusion coefficient can be introduced in (\ref{eq: NDu}), instead of the unit diffusion coefficient. For such generalization in the McKean-Vlasov equation, see e.g., Funaki (1984). In a more realistic problem of large network objects (financial networks associated with blockchains, biological networks, neural networks, data networks etc.), it is of interest to analyze a more complicated infinite (random) {\it tree}  structures rather than the simple local interaction of the infinite directed chains considered here. With these generalizations, it may also be  natural to replace the current state space $\, \mathbb R\,$ of each particle by a locally compact, separable metric space $\,E\,$. Here, we take the simplest form (\ref{eq: NDu})-(\ref{eq: INITIAL}) for the presentation of the essential idea of the infinite directed chain interaction. It can be seen as the sparse counterpart of a complete graph (as arising in the mean field setting) among the set of connected graphs. In the setup of unimodular Galton-Watson trees and related large sparse (but undirected) networks, we refer the reader to the interesting work of Lacker, Ramanan \& Wu (2019) which came out after this work had been completed. 

\smallskip 

An interesting application of such generalized models in financial markets is modeling of stochastic volatility structures among financial asset price processes, that is, each $\, {X}_{\cdot} \,$  is a volatility process of a financial asset, so that the volatility processes of the financial assets have both a network structure and a mean-field interaction. The local network structure in this case could tie together companies from similar industries, with similar investments or operating under the same jurisdiction. Similar in the study of systemic risk, particle systems with coupled diffusions generated by sparse network structures are of particular importance. Another interesting direction of research is to identify and explore the directed chain stochastic equations (\ref{eq: NDu})-(\ref{eq: INITIAL}) or their variants, as Nash equilibria of stochastic games, where the representative pair of players interact optimally in the presence of both mean-field and network structure. This program was introduced for the mean-field games in Carmona, Fouque \& Sun (2015) and  substantial work has followed in the context of mean-field games and systemic risk analysis. 
The corresponding problem on chain-like networks is the object of a manuscript under preparation. 
\hfill $\,\square\,$

\end{remark} 

\section{Particle system approximation} \label{sec: InfParticle} 

We interpret the solution pair $\,(X_{\cdot}^{(u)}, \widetilde{X}_{\cdot}^{(u)}) \,$ in Proposition \ref{prop: 1} as a representative pair in the limits of the directed chain particle system (\ref{eq: NDMF}) we introduced in section \ref{sec: Chain}, as $\, n \to \infty\,$. We view $\,X_{\cdot}^{(u)}\,$ as a particle  which corresponds to the {\it head} of an arrow and $\, \widetilde{X}_{\cdot}^{(u)}\,$ as another particle which corresponds to the {\it tail} of the same arrow. Here $\,u\,$ represents the strength of the directed chain dependence, comparative to the mean-field interaction. In this section we shall discuss this interpretation precisely by showing the limiting results in Propositions \ref{prop: 1.4}-\ref{prop: 1.5}, as an extension from the  stochastic chaos  of M. Kac (1956) (or propagation of chaos) towards a local dependence structure with mean-field interaction, and then discuss a fluctuation estimate in Proposition \ref{prop: 2.0}.  

Let us consider a sequence of finite systems of particles $\, (X_{t, i}^{(u)}\, $, $\, t\ge 0 \,$, $\, i \, =\, 1, \ldots , n)\,$, $\, n \in \mathbb N\,$ defined by the system of stochastic differential equations  
\begin{equation}  \label{eq: PAu}
{\mathrm d} X_{t, i}^{(u)} \, =\, b\big( t, X_{t, i}^{(u)}, \widehat{F}_{t, i}^{(u)} \big) {\mathrm d} t  + {\mathrm d} W_{t, i} \, ; \quad t \ge 0 \, , \quad i \, =\, 1, \ldots , n-1 \, , 
\end{equation}
where $\,b: [0, \infty) \times \mathbb R \times \mathcal M (\mathbb R) \to \mathbb R \,$ is defined in (\ref{eq: Lin}) with the same assumptions (\ref{eq: Lip})-(\ref{eq: 1.2a}) as in Proposition \ref{prop: 1}, 
\[
\widehat{F}_{t, i}^{(u)} (\cdot) \, :=\, u \cdot \delta_{ X_{t, i+1}^{(u)} 
} (\cdot) + (1-u) \cdot \frac{1}{n} \sum_{j=1}^{n} \delta_{X_{t, j}^{(u)}} (\cdot) \,,  \quad i \, =\, 1, \ldots , n-1 \, 
\]
with the boundary particle 
\begin{equation} \label{eq: PAub}
{\mathrm d} X_{t, n}^{(u)} \, =\, b \Big( t, X_{t, n}^{(u)},  u \cdot \delta_{X_{t, 1}^{(u)} 
}  + (1-u) \cdot \frac{1}{n} \sum_{j=1}^{n} \delta_{X_{t, j}^{(u)}}  \Big) {\mathrm d} t  + {\mathrm d} W_{t, n} \, . 
\end{equation}

Here $\, W_{\cdot, i}\,$, $\,i \in \mathbb N\,$ are standard independent Brownian motions on a filtered probability space, independent of the initial values $\, X_{0,i}^{(u)}\,$, $\, i \, =\,  1, \ldots , n\,$ and of $\,B_{\cdot}\,$ in (\ref{eq: NDu}). We assume the distribution of $\, X_{0, i}\,$ is {\it common} with $\, \mathbb E [ \lvert X_{0, 1} \rvert^{2} ] < + \infty\,$ for $\, i \, =\, 1, \ldots , n\,$ and {\it independent} of each other. 

Thanks to the assumption on $\,b\,$, the resulting particle system (\ref{eq: PAu})-(\ref{eq: PAub}) is well-defined, and in particular, we have the law invariance  $\,\text{Law} ( X_{\cdot, i}^{(u)}) \, =\,  \text{Law} ( X_{\cdot, 1}^{(u)})  \,$, $\, i \, =\, 1, \ldots , n \,$,  
\begin{equation}\label{eq: symmetry}
 \text{Law} ( X_{\cdot, i}^{(u)}, X_{\cdot, i+1}^{(u)}) \, =\,  \text{Law} ( X_{\cdot, 1}^{(u)}, X_{\cdot, 2}^{(u)}) \, ; \quad i \, =\, 1 \ldots ,  n-1 \, , 
\end{equation}
and more generally, the invariance under the shifts in one direction, i.e., for every fixed $\,k \le n -1 \,$,  
\begin{equation}  \label{eq: symmetry2}
\text{Law} ( X_{\cdot, i}^{(u)}, X_{\cdot, i+1}^{(u)}, \ldots , X_{\cdot, i+k-1}^{(u)}) \, =\,  \text{Law} ( X_{\cdot, 1}^{(u)}, X_{\cdot, 2}^{(u)}, \ldots , X_{\cdot, k}^{(u)}) \, ; \quad i=	1, \ldots , n-k +1\, . 
\end{equation}
Thus, it is natural to write $\, X_{\cdot, n+j}^{(u)} \equiv X_{\cdot, j}^{(u)}\,$, $\,j \, =\,  1, 2, \ldots \,$, so that (\ref{eq: PAu}) and (\ref{eq: symmetry})-(\ref{eq: symmetry2}) hold for $\, i \, =\, 1, \ldots , n\,$. The system  (\ref{eq: NDMF}) in section \ref{sec: Chain} is a time-homogeneous special case of (\ref{eq: PAu}). 

\medskip

Under the setup of Proposition \ref{prop: 1.2} we shall also consider a sequence of finite particle systems $\, \overline{X}_{t, i}\,$, $\, t \ge 0\,$, $\, i \, =\, 1, \ldots , n+1\,$, $\, n \ge 1\,$, defined recursively from the pair $\, (\overline{X}_{\cdot, n}, \overline{X}_{\cdot, n+1}) \, :=\, (X_{\cdot}^{(u)}, \widetilde{X}_{\cdot}^{(u)}) \,$  of the solution to (\ref{eq: NDu}) with (\ref{eq: Fu})-(\ref{eq: INITIAL}), that is, the corresponding stochastic equation 
\begin{equation} \label{eq: 3.4} 
{\mathrm d} \overline{X}_{t, n} \, =\,  b(t, \overline{X}_{t, n}, u \cdot \delta_{ \overline{X}_{t,n+1}} + (1 - u) \cdot \mathcal L_{ \overline{X}_{t,n}} ) {\mathrm d} t + {\mathrm d} W_{t, n} \, ; \quad t \ge 0 \, , 
\end{equation}
and then for $\, j \, =\, n-1, n-2, \ldots , 1\,$,  given $\, \overline{X}_{\cdot, j+1}\,$, we solve 
\begin{equation} \label{eq: 3.5}
{\mathrm d} \overline{X}_{t, j} \, =\,  b( t, \overline{X}_{t, j}, u \cdot \delta_{ \overline{X}_{t, j+1}} + ( 1- u) \cdot \mathcal L_{ \overline{X}_{t, j}} ) {\mathrm d} t + {\mathrm d} W_{t, j} \, ; \quad t \ge 0 \, 
\end{equation}
with the restrictions for each pair $\, ( \overline{X}_{\cdot, j}, \overline{X}_{\cdot, j+1}) \,$, corresponding to (\ref{eq: NDlaw}). As a consequence of the proof of Proposition \ref{prop: 1.2}, we set the common law $\, {\mathrm m}^{\ast} \, =\, \text{Law} ( \overline{X}_{\cdot, i})\,$ for $\, i \, =\, 1, \ldots , n+1\,$, and we also assume the initial values are the same as $\, X_{0, i}^{(u)} \, =\, \overline{X}_{0, i}\,$, $\, i \, =\, 1, \ldots , n\,$ almost surely. Note that when $\,u \, =\,  0\,$, the particle system $\,\overline{X}_{t,i}\,$, $\, i \, =\,  1, \ldots , n+1\,$ induces a product measure; When $\, u \in (0, 1] \,$, the particle system forms a Russian doll nesting structure (see Remark \ref{rem: 1.2} after the proof of Proposition \ref{prop: 1})

For $\, n \ge 1\,$ with $\, X_{\cdot, n+1}^{(u)} \equiv X_{\cdot, 1}^{(u)}\,$ let us assign the weight $\,1/ n\,$ to the Dirac measure at $\, (X_{t,i}^{(u)}, X_{t,i+1}^{(u)})\,$ for $\,i \, =\,  1, \ldots , n\,$, and consider the law of the joint empirical measure process 
\begin{equation} \label{eq: empMeas}
\mathrm M_{t,n} \, :=\,  \frac{\,1\,}{\,n\,} \sum_{i=1}^{n} \delta_{(X_{t,i}^{(u)}, X_{t,i+1}^{(u)})} \,, \quad \text{ with the marginal } 
 \quad  \mathrm m_{t, n} \, :=\, \frac{1}{\, n \, }\sum_{i=1}^{n} \delta_{X_{t,i}^{(u)}} \,, \quad  0 \le t \le T\, , 
\end{equation}
in the space $\mathcal M ( \Omega_{2}) $ of probability measures on the topological space $\, \Omega_{2} \, :=\, D ([0, T], ( \mathcal M ( \mathbb R^{2}) , \lVert \cdot \rVert_{1}))  \,$ of c\`adl\`ag functions on $\,[0, T]\,$ equipped with the Skorokhod topology, where $ (\mathcal M ( \mathbb R^{2}), \lVert \cdot \rVert_{1}) $ is the space of probability measures on $\,\mathbb R^{2}\,$ equipped with the metric $\, \lVert \mu - \nu \rVert_{1} \, :=\,  \sup_{f } \int_{\mathbb R^{2}} f(x) {\mathrm d} (\mu - \nu)(x)  \,$. Here the supremum is taken over the bounded Lipschitz functions $\,f: \mathbb R^{2} \to \mathbb R\,$  with $\,\sup_{x \in \mathbb R^{2}} \lvert f(x) \rvert \le 1\,$ and $\, \sup_{x,y \in \mathbb R^{2}} \lvert f(x) - f(y) \rvert/ \lVert x - y\rVert \le 1\,$. By the construction the sequence of the law of the initial empirical measure converges to the Dirac measure concentrated in $\, \mathrm M_{0}\,$ (say), i.e., 
\begin{equation} \label{eq: convInit}
\text{Law} ( \mathrm M_{0, n}) \xrightarrow[n\to \infty]{} \delta_{\mathrm M_{0}} \quad \text{ weakly in } \quad \mathcal M ( (\mathcal M(\mathbb R^{2}), \lVert \cdot \rVert_{1} )) \, . 
\end{equation}
We denote by $\, \mathrm m_{0} ({\mathrm d} y) \, :=\, \mathrm M_{0} ( \mathbb R \times {\mathrm d} y) \, =\, \mathrm M_{0}  (  {\mathrm d} y \times \mathbb R) \,$ the marginal of $\, \mathrm M_{0} \, =\, \mathrm m_{0} \otimes \mathrm m_{0}\,$. 

\begin{prop} \label{prop: 1.4} Fix $\,u \in [0, 1]\,$. Under the same assumptions for the functional $\,b\,$ as in Proposition \ref{prop: 1.2}, the law of joint empirical measure process $\, \mathrm M_{\cdot , n }\,$, defined in $(\ref{eq: empMeas})$, of the finite particle system $(\ref{eq: PAu})$ with $\, X_{\cdot, n+1}^{(u)} \equiv X_{\cdot, 1}^{(u)}\,$ converges in $\, \mathcal M ( \Omega_{2}) \,$ to the Dirac measure concentrated in the deterministic measure-valued process $\, \mathrm M_{t}, 0 \le t \le T\,$, as $\,n \to \infty\,$, i.e., 
\begin{equation} \label{eq: LLN1}
\lim_{n\to \infty} \text{\rm Law} \big( \mathrm M_{t,n}, 0 \le t \le T \big) \, =\,  \delta_{(\mathrm M_{t}, 0 \le t \le T)} \quad \text{ in } \quad \mathcal M (\Omega_{2}) \, . 
\end{equation}
The marginal laws of $\, \mathrm M_{\cdot}\,$ are the same, i.e., 
\begin{equation} \label{eq: CT-I}
\, \mathrm M_{t}( \mathbb R \times  {\mathrm d} y) \, =\, \mathrm M_{t} ( {\mathrm d} y \times \mathbb R) \, =:\, \mathrm m_{t} ({\mathrm d} y) \,; \quad \, 0 \le t \le T\,, 
\end{equation}
and the joint $\,\mathrm M_{\cdot}\,$ and its marginal $\, \mathrm m_{\cdot}\,$ satisfy the integral equation 
\begin{equation} \label{eq: IntEq}
\int_{\mathbb R} g(x)  \mathrm m_{t}( {\mathrm d} x) \, =\,   \int_{\mathbb R} g(x) \mathrm m_{0}( {\mathrm d} x) + \int^{t}_{0}[ \mathcal A_{s}(\mathrm M) g] \, {\mathrm d}s \, ; \quad 0 \le t \le T 
\end{equation}
for every test function $\,g \in C^{2}_{c}(\mathbb R)\,$, where 
\[
\mathcal A_{s}(\mathrm M) g \, :=\, u \int_{\mathbb R^{2}} \widetilde{b}(s, y_{1}, y_{2}) g^{\prime}(y_{1}) \mathrm M_{s}( {\mathrm d} y_{1} {\mathrm d} y_{2}) + (1-u) \int_{\mathbb R^{2}} \widetilde{b}(s, y_{1}, y_{2}) g^{\prime}(y_{1}) \mathrm m_{s}( {\mathrm d} y_{1}) \mathrm m_{s} ( {\mathrm d} y_{2}) 
\]
\begin{equation} \label{eq: Ainfinty}
\quad {} + \frac{\,1\,}{\,2\,} \int_{\mathbb R} g^{\prime\prime}(y_{1}) \mathrm m_{s}( {\mathrm d} y_{1}) \, ; \quad 0 \le s \le T \, .  
\end{equation}

Moreover, $\, \mathrm M_{\cdot}\,$ is the joint distribution of the solution pair $\, (X_{\cdot}^{(u)}, \widetilde{X}_{\cdot}^{(u)})\,$ of $(\ref{eq: NDu})$ with $(\ref{eq: Fu})$-$(\ref{eq: INITIAL})$, uniquely characterized by $(\ref{eq: CT-I})$-$(\ref{eq: Ainfinty})$  in $\,\Omega_{1}\,$ with the common marginal $\, \mathrm m_{\cdot} \, =\,  \text{Law} ( X_{\cdot}) \, =\, \text{Law} ( \widetilde{X}_{\cdot}) \,$ 
in Proposition \ref{prop: 1}. 
\end{prop}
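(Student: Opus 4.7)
The plan follows the classical three-step program for empirical-measure limits: first, establish tightness of the laws of $(\mathrm M_{\cdot, n})_{n}$ in $\mathcal M(\Omega_{2})$; second, show that every weak subsequential limit is concentrated on a deterministic measure-valued path satisfying the symmetry (\ref{eq: CT-I}) together with the integral equation (\ref{eq: IntEq})-(\ref{eq: Ainfinty}); third, identify this limit with the joint law of the representative pair $(X_\cdot^{(u)}, \widetilde X_\cdot^{(u)})$ of Proposition \ref{prop: 1}, so that the uniqueness in law established there forces the full sequence to converge to the Dirac mass at $\mathrm M_\cdot$.

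Tightness begins with uniform moment bounds: by an argument parallel to Proposition \ref{prop: 1.2}, using the linear growth (\ref{eq: 1.2a}) of $\widetilde b$ and Gronwall's lemma, one obtains $\sup_{n}\sup_{i}\mathbb E[\sup_{0\le t\le T}|X_{t,i}^{(u)}|^{2}]<\infty$, where the bound is independent of $i$ thanks to the shift invariance (\ref{eq: symmetry})-(\ref{eq: symmetry2}). Tightness in the Skorokhod space $\Omega_{2}$ then follows by testing against a countable convergence-determining family of bounded Lipschitz functions $f:\mathbb R^{2}\to\mathbb R$: for each such $f$, Itô's formula applied to $f(X_{t,i}^{(u)}, X_{t,i+1}^{(u)})$ and averaged over $i$ yields a real-valued semimartingale $t\mapsto \langle \mathrm M_{t,n}, f\rangle$ whose drift and quadratic variation are uniformly controlled, so that Aldous's criterion applies.

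Now let $\mathrm M_{\cdot}^{\ast}$ denote any weak subsequential limit. To derive (\ref{eq: IntEq})-(\ref{eq: Ainfinty}), I would apply Itô's formula to $g(X_{t,i}^{(u)})$ for $g\in C_c^{2}(\mathbb R)$, sum over $i=1,\ldots,n$ and divide by $n$; using the representation (\ref{eq: Lin}) and the decomposition $b(s,x,\widehat F_{s,i}^{(u)}) = u\,\widetilde b(s,x,X_{s,i+1}^{(u)}) + (1-u)\,\frac{1}{n}\sum_{j}\widetilde b(s,x,X_{s,j}^{(u)})$, the drift becomes precisely $\mathcal A_{s}(\mathrm M_{\cdot,n})g$ in the notation of (\ref{eq: Ainfinty}), while the Brownian contribution $\frac{1}{n}\sum_{i}\int_0^t g'(X_{s,i}^{(u)})\,dW_{s,i}$ has $L^{2}$-norm $O(n^{-1/2})$ and vanishes. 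Passing to the limit along the subsequence and invoking the continuity of $\widetilde b$, $g$, $g'$ against bounded Lipschitz pairings then yields (\ref{eq: IntEq}) for $\mathrm M_{\cdot}^{\ast}$. The identity (\ref{eq: CT-I}) descends from its pre-limit counterpart: for any bounded continuous $f:\mathbb R\to \mathbb R$, $\int f(y)\,\mathrm M_{s,n}(\mathbb R\times dy)=\frac{1}{n}\sum_{i}f(X_{s,i+1}^{(u)}) = \int f(y)\,\mathrm m_{s,n}(dy)$ by relabeling, and analogously for the other marginal.

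The remaining task is to pin down $\mathrm M_{\cdot}^{\ast}$ uniquely, and this I expect to be the main obstacle, since the marginal integral equation (\ref{eq: IntEq})-(\ref{eq: Ainfinty}) does not by itself determine the joint $\mathrm M_{\cdot}^{\ast}$. To close the argument, I would lift the Itô-averaging of the previous paragraph to two-variable test functions $\varphi\in C_c^{2}(\mathbb R^{2})$ applied to the pair $(X_{t,i}^{(u)}, X_{t,i+1}^{(u)})$; by the same summation and cancellation of the martingale parts, this produces a martingale problem on $\mathbb R^{2}$ which forces $\mathrm M_{\cdot}^{\ast}$ to be the joint law of a pair $(Y_\cdot, \widetilde Y_\cdot)$ of continuous processes solving the directed chain equation (\ref{eq: NDu}) with (\ref{eq: Fu})-(\ref{eq: INITIAL}). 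The uniqueness in law of this pair, guaranteed by Proposition \ref{prop: 1}, then identifies $\mathrm M_{\cdot}^{\ast}$ with $\mathrm M_{\cdot}:=\text{Law}(X_\cdot^{(u)}, \widetilde X_\cdot^{(u)})$; since the limit is deterministic and independent of the chosen subsequence, tightness upgrades to full convergence (\ref{eq: LLN1}).
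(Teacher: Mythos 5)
Your overall three-step plan (tightness, martingale characterization of limit points, identification via uniqueness in Proposition~\ref{prop: 1}) is the same as the paper's, which uses the martingale approach of Oelschl\"ager and relative compactness via Ethier~\& Kurtz; your substitution of Aldous's criterion and a direct It\^o-averaging argument is an acceptable variant, the $O(n^{-1/2})$ bound on the averaged stochastic integral is correct, and the relabeling derivation of (\ref{eq: CT-I}) from the circular boundary condition is clean.

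However, there is a genuine gap in your final identification step. You propose to ``lift'' the argument to two-variable test functions $\varphi\in C^2_c(\mathbb R^2)$ applied to $(X^{(u)}_{t,i}, X^{(u)}_{t,i+1})$ and claim this produces a closed martingale problem on $\mathbb R^2$ identifying $\mathrm M_\cdot^\ast$. It does not close: when you expand ${\mathrm d}\varphi(X^{(u)}_{t,i}, X^{(u)}_{t,i+1})$ the drift contribution from the second coordinate is $\partial_2\varphi\,(X^{(u)}_{t,i}, X^{(u)}_{t,i+1})\,b\big(t, X^{(u)}_{t,i+1}, \widehat F^{(u)}_{t,i+1}\big)$, and $\widehat F^{(u)}_{t,i+1}$ contains $\delta_{X^{(u)}_{t,i+2}}$. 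After averaging and passing to the limit, the resulting generator for $\mathrm M_\cdot^\ast$ involves the three-point joint distribution, not just $\mathrm M_\cdot^\ast$ itself. This is precisely the ``Russian nesting doll'' hierarchy the paper makes explicit in Remarks~\ref{rem: 1.2} and~\ref{rem: 3.1} and in Proposition~\ref{prop: 1.5}: the integral equation for the $j$-point empirical measure involves the $(j+1)$-point measure for every $j$ when $u\in(0,1]$. Thus the two-variable martingale problem alone cannot ``force'' $\mathrm M_\cdot^\ast$ to be the law of a pair solving (\ref{eq: NDu}); you need an additional mechanism to control the infinite hierarchy. The paper closes this by invoking the construction (\ref{eq: 3.4})--(\ref{eq: 3.5}) together with the fixed-point uniqueness of Proposition~\ref{prop: 1} and by referring to Lemmas~8--10 of Oelschl\"ager (1984), which show how a weak limit of the empirical process can be represented as the law of a solution to the nonlinear equation; that representation step, not a self-contained $\mathbb R^2$ martingale problem, is what makes the uniqueness in Proposition~\ref{prop: 1} applicable. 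To repair your argument you would either need to carry the full hierarchy of $k$-point empirical measures simultaneously (essentially proving Proposition~\ref{prop: 1.5} en route) or adopt the representation argument the paper cites.
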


\begin{remark} \label{rem: 3.1} 
$\,\bullet\,$ When $\,u \, =\, 0\,$, the integral equation (\ref{eq: IntEq}) for $\, \mathrm M_{\cdot}\,$ reduces to the \textsc{McKean-Vlasov} nonlinear integral equation only for the marginal $\, \mathrm m_{\cdot}\,$, i.e., for $\, 0 \le t \le T\,$ and $\, g \in C^{2}_{c} (\mathbb R)\,$ 
\[
\int_{\mathbb R} g(x) \mathrm m_{t} ({\mathrm d} x ) \, =\, \int_{\mathbb R} g(x) \mathrm m_{0} ( {\mathrm d} x) + \int^{t}_{0} {\mathrm d} s \Big[ \int_{\mathbb R^{2}} \widetilde{b}(s, y_{1}, y_{2}) g^{\prime}(y_{1}) \mathrm m_{s}({\mathrm d} y_{1}) \mathrm m_{s} ({\mathrm d} y_{2}) + \frac{\,1\,}{\,2\,} \int_{\mathbb R} g^{\prime\prime} (y_{1}) \mathrm m_{s}({\mathrm d} y_{1}) \Big] \, .  
\]

\noindent $\,\bullet\,$ When $\, u \in ( 0, 1] \,$, the integral equation (\ref{eq: IntEq}) has an infinite-dimensional feature, because of marginal distributional constraints (\ref{eq: CT-I}), as we discussed in Remark \ref{rem: 1.2}, i.e., the joint distribution $\, \mathrm M_{\cdot}\,$ appears in the infinitesimal generator (\ref{eq: Ainfinty}) for the marginal distribution.  \hfill $\,\square\,$
\end{remark}

\begin{proof}
The idea of the proof utilizes the assumptions on the coefficient $\,b\,$ as in Proposition \ref{prop: 1.2} and the law invariance (\ref{eq: symmetry}) of the finite particle system (\ref{eq: PAu}). We take the martingale approach discussed in {Oelschl\"ager} (1984). By the standard argument with  {Gronwall}'s lemma we claim 

\begin{lm} \label{lm: 1} $(a)$ 
For the joint empirical measure processes $\,\mathrm M_{\cdot, n}  \,$ and its marginal $\, \mathrm m_{\cdot, n} \,$ there exist constants $\,c_{k} (> 0) \,$, $\,k \, =\, 1, \ldots, 4 \,$ such that 
\[
e^{- c_{1}t} \int_{\mathbb R} \lvert x \rvert^{2} {\mathrm d} \mathrm m_{t,n}(x)  - c_{2} t  \,, 0 \le t \le T \quad \Big( e^{ c_{3}t} \int_{\mathbb R} \lvert x \rvert^{2} {\mathrm d} \mathrm m_{t,n}(x) + c_{4} t \,, 0 \le t \le T , \text{\it respectively } \Big) 
\]
is a supermartingale (submartingale, respectively), and hence, so is 
\[
e^{- c_{1} t} \int_{\mathbb R^{2}} \lVert y \rVert^{2} {\mathrm d} \mathrm M_{t,n}(y)  - 2 c_{2} t  \,, 0 \le t \le T \quad \Big( e^{ c_{3} t}   \int_{\mathbb R^{2}} \lVert y \rVert^{2} {\mathrm d} \mathrm M_{t,n}(y) + 2 c_{4} t    \,, 0 \le t \le T , \text{ respectively } \Big) \, , 
\] 
because $\, \sum_{i=1}^{n} \lvert X_{\cdot,i}^{(u)}\rvert^{2} \, =\,  \sum_{i=1}^{n} \lvert X_{\cdot, i+1}^{(u)} \rvert^{2} \, =\, (1/2) \sum_{i=1}^{n} (\lvert X_{\cdot,i}^{(u)}\rvert^{2} +  \lvert X_{\cdot, i+1}^{(u)} \rvert^{2} )\,$.  

$(b)$ Moreover, there exist constants $\,c_{k}\,$, $\, k \, =\,  5, 6\,$, such that for every $\, t \le s \le T\,$
\[
\frac{\,1\,}{\,n\,} \sum_{i=1}^{n}\mathbb E \big[ 2 b(s, X_{s,i}^{(u)}, \widehat{F}_{t,i}^{(u)} ) (X_{s, i}- X_{t, i}) \vert \mathcal F_{t}] \le c_{5} \int_{\mathbb R} \lvert x\rvert^{2} {\mathrm d} m_{t, n}(x) + c_{6} \, . 
\]
\end{lm}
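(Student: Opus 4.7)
The plan is to apply It\^o's formula to $\lvert X_{t,i}^{(u)}\rvert^{2}$ for each $i$, average in $i$, and exploit the shift invariance $(\ref{eq: symmetry})$ to bound the drift of $Y_{t} := \int_{\mathbb{R}} \lvert x\rvert^{2}\,d\mathrm{m}_{t,n}(x) = n^{-1}\sum_{i=1}^{n} \lvert X_{t,i}^{(u)}\rvert^{2}$ above and below by an affine function of $Y_{t}$ alone. The (super/sub)martingale statements in (a) then follow by premultiplying with an appropriate exponential to absorb the linear-in-$Y_{t}$ contribution; part (b) reduces to (a) via Young's inequality together with the linear growth of $\widetilde b$.

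For (a), It\^o's formula applied to $(\ref{eq: PAu})$--$(\ref{eq: PAub})$ gives $dY_{t} = \bigl(1 + \tfrac{2}{n}\sum_{i} X_{t,i}^{(u)}\,b(t, X_{t,i}^{(u)},\widehat{F}_{t,i}^{(u)})\bigr)\,dt + dM_{t}$ with $dM_{t} := \tfrac{2}{n}\sum_{i} X_{t,i}^{(u)}\,dW_{t,i}$. The linear growth $(\ref{eq: 1.2a})$ yields $\lvert b(t,x,\widehat{F}_{t,i}^{(u)})\rvert \le C_{T}(1 + \lvert x\rvert + u\lvert X_{t,i+1}^{(u)}\rvert + (1-u)\tfrac{1}{n}\sum_{j} \lvert X_{t,j}^{(u)}\rvert)$. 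Multiplying by $\lvert X_{t,i}^{(u)}\rvert$, using Young's inequality $2ab\le a^{2}+b^{2}$, Cauchy--Schwarz $(\tfrac{1}{n}\sum_{j}\lvert X_{t,j}^{(u)}\rvert)^{2} \le Y_{t}$, and the shift invariance $\tfrac{1}{n}\sum_{i}\lvert X_{t,i+1}^{(u)}\rvert^{2} = Y_{t}$, I obtain $\bigl\lvert \tfrac{2}{n}\sum_{i} X_{t,i}^{(u)}\,b(\cdots) \bigr\rvert \le \kappa_{1}Y_{t} + \kappa_{2}$ with constants uniform in $n$ and $u$. Hence $d(e^{-c_{1}t}Y_{t}) \le e^{-c_{1}t}(\kappa_{2}+1)\,dt + e^{-c_{1}t}\,dM_{t}$ for $c_{1} := \kappa_{1}$, so $e^{-c_{1}t}Y_{t} - c_{2}t$ is a local supermartingale for any $c_{2} \ge \kappa_{2}+1$. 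Symmetrically, $e^{c_{3}t}Y_{t} + c_{4}t$ is a local submartingale for $c_{3} := \kappa_{1}$ and $c_{4} := (\kappa_{2}+1)\,e^{c_{3}T}$. Taking expectations in the It\^o identity and invoking Gronwall's lemma gives $\sup_{t\le T}\mathbb{E}[Y_{t}]<\infty$, so $\mathbb{E}[\langle M\rangle_{T}]<\infty$ upgrades the local property to a genuine (super/sub)martingale. The $\mathrm{M}_{t,n}$-version is immediate from $\int_{\mathbb{R}^{2}} \lVert y\rVert^{2}\,d\mathrm{M}_{t,n}(y) = 2Y_{t}$, which follows from the identity $\sum_{i}\lvert X_{t,i}^{(u)}\rvert^{2} = \sum_{i}\lvert X_{t,i+1}^{(u)}\rvert^{2}$ noted in the statement.

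For (b), I would split $2b(s,X_{s,i}^{(u)},\widehat{F}_{t,i}^{(u)})(X_{s,i}^{(u)}-X_{t,i}^{(u)}) \le b(\cdots)^{2} + (X_{s,i}^{(u)}-X_{t,i}^{(u)})^{2}$ by Young's inequality. Linear growth bounds $b(\cdots)^{2}$ by $C\bigl(1 + \lvert X_{s,i}^{(u)}\rvert^{2} + \lvert X_{t,i+1}^{(u)}\rvert^{2} + Y_{t}\bigr)$ (Cauchy--Schwarz absorbs the mean-field term into $Y_{t}$), while $(X_{s,i}^{(u)}-X_{t,i}^{(u)})^{2} \le 2(\lvert X_{s,i}^{(u)}\rvert^{2} + \lvert X_{t,i}^{(u)}\rvert^{2})$. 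Averaging over $i$, taking $\mathbb{E}[\,\cdot\mid\mathcal{F}_{t}]$, and applying shift invariance gives a bound of the form $C(1 + \mathbb{E}[Y_{s}\mid\mathcal{F}_{t}] + Y_{t})$. The remaining estimate $\mathbb{E}[Y_{s}\mid\mathcal{F}_{t}] \le C'Y_{t} + C''$ for $t\le s\le T$ is extracted directly from the supermartingale part of (a): $\mathbb{E}[e^{-c_{1}s}Y_{s}-c_{2}s\mid\mathcal{F}_{t}] \le e^{-c_{1}t}Y_{t}-c_{2}t$ rearranges immediately. Combining the two bounds produces $c_{5},c_{6}$ as claimed.

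The principal obstacle I expect is the bookkeeping needed to keep all constants uniform in $n$ and in $u\in[0,1]$; it is precisely the shift invariance $(\ref{eq: symmetry})$ that places the ``directed chain'' contribution $u\,\delta_{X_{t,i+1}^{(u)}}$ on the same footing as the mean-field term once second moments are summed, so that the pairs $(c_{1},c_{2})$ and $(c_{3},c_{4})$ can indeed be chosen independently of $u$.
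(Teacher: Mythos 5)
Your proposal is correct, and it takes the route the paper itself invokes but does not write out: the paper's proof of this lemma consists of the single line ``by the standard argument with Gronwall's lemma,'' which is precisely the It\^o-formula-plus-linear-growth-plus-Gronwall computation you carry through. Your supporting observations are all valid: the cyclic convention $X_{\cdot,n+1}^{(u)}\equiv X_{\cdot,1}^{(u)}$ is what makes $\frac{1}{n}\sum_i|X_{t,i+1}^{(u)}|^2=Y_t$ exact, the constants you extract are indeed uniform in $n$ and $u\in[0,1]$, and deducing $\mathbb E[Y_s\mid\mathcal F_t]\le C'Y_t+C''$ from the supermartingale in part (a) to close part (b) is a clean way to organize the argument.
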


Using this lemma and the {Cauchy-Schwarz}  inequality, we claim that 
\begin{equation*}
\begin{split}
\frac{\,1\,}{\,n\,} \sum_{i=1}^{n} \mathbb E [ \lvert X_{t+\delta, i}^{(u)} - X_{t, i}^{(u)} \rvert \vert \mathcal F_{t} ] &\le \Big( \frac{\,1\,}{\,n\,} \sum_{i=1}^{n}  \mathbb E [ \lvert X_{t+\delta, i}^{(u)} - X_{t, i}^{(u)} \rvert^{2} \vert \mathcal F_{t} ]  \Big)^{1/2}
\\
\, &=\,  \Big(  \int^{t+\delta}_{t} \frac{\,1\,}{\,n\,} \sum_{i=1}^{n} \Big( \mathbb E \Big[  2 (b, s, X_{s, i}, \widehat{F}_{s}^{(u)})(X_{s, i} - X_{t, i}) \Big \vert \mathcal F_{t}\Big] + 1 \Big) {\mathrm d} s \Big)^{1/2}
\\
& \le \Big( c_{5} \int_{\mathbb R} \lvert x\rvert^{2} {\mathrm d} m_{t, n}(x) + c_{6} + 1 \Big) ^{1/2} \sqrt{\delta} 
\end{split}
\end{equation*} 
for $\, 0 \le t \le T-\delta\,$. 
Thus, using these inequalities again with the super martingale property, we claim that there exists an $\,\mathcal F_{T}\,$-measurable random variable $\,\, \mathfrak f(\delta) \,$, such that 
\begin{equation} \label{eq: diffbnd}
\frac{\,1\,}{\,n\,} \sum_{i=1}^{n} \mathbb E [ (\lvert X_{t+\delta, i}^{(u)} - X_{t, i}^{(u)} \rvert^{2}+ \lvert X_{t+\delta, i+1}^{(u)} - X_{t, i+1}^{(u)} \rvert^{2})^{1/2}  \vert \mathcal F_{t} ] \le \mathbb E \big [ \, \mathfrak f(\delta) \, \vert \, \mathcal F_{t} \big ] \, ; \quad 0 \le t \le T-\delta \, , 
\end{equation}
with $\, \lim_{\delta \to 0} \sup_{n \ge 1}\mathbb E [ \mathfrak f(\delta) ] \, =\,  0 \,$. Here we set $\, X_{\cdot, n+1}^{(u)} \equiv X_{\cdot, 1}^{(u)}\,$. 

Moreover, by the super/submartingale properties in Lemma \ref{lm: 1} (a) we may evaluate the total variation $\,\mathrm  \lVert \mathrm M_{t,n} \vert_{B_{\lambda}^{c}} \rVert_{TV}\,$ of $\, \mathrm M_{\cdot, n}\,$ restricted outside the ball $\,B_{\lambda} \, :=\, \{x \in \mathbb R^{2} : \lVert x \rVert \le \lambda \} \,$ of radius $\, \lambda ( > 0) \,$, i.e., for every $\, \varepsilon > 0 \,$ 
\begin{equation*}
\begin{split}
\mathbb P ( \sup_{0 \le t \le T} \lVert \mathrm M_{t,n} \vert_{ B_{\lambda}^{c} }\rVert_{TV} > \varepsilon) & \le  \mathbb P \Big( \sup_{0 \le t \le T} \int_{\mathbb R^{2}} \lVert y\rVert^{2} {\mathrm d} \mathrm M_{t, n} > \lambda^{2} \varepsilon \Big) \\
&  \le \mathbb P \Big( \sup_{0 \le t \le T}   e^{c_{1} t} \Big( \int_{\mathbb R^{2}} \lVert y\rVert^{2} {\mathrm d} \mathrm M_{t, n}  +2 c_{2} t \Big) > \lambda^{2} \varepsilon \Big) \\
& \le \frac{\,1\,}{\,\lambda^{2}\varepsilon\,} \mathbb E \Big  [ e^{c_{1}T} \Big( \int_{\mathbb R^{2}} \lVert y\rVert^{2} {\mathrm d} \mathrm M_{T, n}  + 2c_{2} T \Big)  \Big]  \\
& \le \frac{\,1\,}{\,\lambda^{2}\varepsilon\,} \mathbb E \Big  [  \Big( \int_{\mathbb R^{2}} \lVert y\rVert^{2} {\mathrm d} \mathrm M_{0, n}  + 2c_{4} T \Big) e^{(c_{1} + c_{3})T} + 2c_{2} T  \Big] \, . 
\end{split}
\end{equation*}
Taking sufficiently large $\, \lambda\,$, using {Prohorov}'s theorem, we claim that $\, (\mathrm M_{t, n}\,$, $\, 0 \le t \le T)\,$, $\,n \ge 1\,$ of the empirical measures is tight in $\, (\mathcal M ( \mathbb R^{2}), \lvert \cdot \rvert_{1})\,$. Then combining this observation with (\ref{eq: diffbnd}), we claim by Theorem 8.6 (b)  of {Ethier \& Kurtz} (1986) that the sequence $\,( \mathrm M_{t, n}\,$, $\, 0 \le t \le T)\,$, $\,n \ge 1\,$ is relatively compact in the space $\, \mathcal M ( \Omega_{2}) \,$, where $\, \mathcal M (\Omega_{2}) \,$ is equipped with the weak topology.

We shall characterize the limit points of $\, (\mathrm M_{t, n}, 0 \le t \le T)_{n \ge 1} \,$ as $\,n \to \infty\,$. Let us call a limit law $\, \mathrm M_{t}, 0 \le t \le T\,$. Thanks to the law invariance in the construction of (\ref{eq: PAu}), its marginals must be the same for every limit point, i.e., $\, \mathrm M_{t} ( \mathbb R \times {\mathrm d} y) \, =\,  \mathrm M_{t}( {\mathrm d} y \times \mathbb R ) \, =: \, \mathrm m_{t}( {\mathrm d} y)  \,$, $\, y \in \mathbb R\,$ with the initial marginal measure $\,  \mathrm m_{0} ( {\mathrm d} y) \,$.  Applying  {It\^o}'s formula to the system (\ref{eq: PAu}), we see 
\[
f ( \langle \mathrm m_{t, n}, g \rangle) \, -\,  f( \langle \mathrm m_{0, n}, g \rangle)  -  \int^{t}_{0} f^{\prime}( \langle \mathrm m_{s, n}, g \rangle) \Big(\frac{\,1\,}{\,n\,} \sum_{i=1}^{n} g^{\prime} ( X_{s, i}^{(u)}) b ( s, X_{s, i}^{(u)}, \widehat{F}_{s, i}^{(u)} ) + \frac{\,1\,}{\,2\,} \langle \mathrm m_{s, n}, g^{\prime\prime} \rangle\Big) {\mathrm d} s 
\]
\[
- \frac{\,1\,}{\,2\,}\int^{t}_{0} f^{\prime\prime} ( \langle \mathrm m_{s, n}, g  \rangle ) \frac{1}{\, n^{2}\, } \sum_{i=1}^{n} \lvert g^{\prime} ( X_{s, i}^{(u)})\rvert^{2} {\mathrm d} s  \, =\,  \int^{t}_{0} f^{\prime} ( \langle \mathrm m_{s, n}, g \rangle) \frac{\,1\,}{\,n\,} \sum_{i=1}^{n} g^{\prime}( X_{s,i}^{(u)}) {\mathrm d} W_{s, i}  \, , 
\]
is a martingale for every $\, f \in C^{2}_{b}( \mathbb R) \,$, $\, g \in C^{2}_{c}(\mathbb R)\,$, where we use the notation $\, \langle \mu, g \rangle \, :=\, \int_{\mathbb R} g(x) {\mathrm d} \mu(x) \,$ for $\mu \in \mathcal M (\mathbb R) $. Taking the limits with (\ref{eq: convInit}) and using the equivalence of certain martingales, we observe that $\,\exp (\sqrt{-1}\,  \theta \, \eta_{t} )\,$, $\, 0 \le t \le T\,$ is a martingale for every $\, \theta \in \mathbb R\,$, where we define 
\[
\eta_{t}\, :=\,  \int_{\mathbb R} g (x) {\mathrm d} \mathrm m_{t}(x) - \int^{t}_{0} [\mathcal A_{s} ( \mathrm M_{s}) g] {\mathrm d} s \, 
\]
and $\, \mathcal A_{t} ( \mathrm M_{t}) g\,$ as in (\ref{eq: Ainfinty}) for $\, 0 \le t \le T\,$. This implies that the characteristic function of $\,\eta_{t}\,$ satisfies $\, 
\mathbb E [ e^{\sqrt{ -1} \theta \eta_{t}} ] \, =\,  \mathbb E [ e^{\sqrt{ -1} \theta \eta_{0}} ] \, =\,  e^{\sqrt{ -1} \theta \langle \mathrm m_{0}, g \rangle} \, $ for $\, 0 \le t \le T\,$, $\, \theta \in \mathbb R\,$, and hence, $\, \eta_{t} \, =\, \langle \mathrm m_{0}, g \rangle \,$ for every $\,t\,$ in any countable subset of $\, [ 0, T]\,$ and for every $\,g \,$ in any countable subset of $\, C^{2}_{c} ( \mathbb R)\,$. Because of the separability of $\,C^{2}_{c} (\mathbb R) \,$ and right continuity of $\, t \mapsto \mathrm M_{t}\,$, we obtain 
\[
\int_{\mathbb R} g (x) {\mathrm d} \mathrm m_{t}(x) - \int^{t}_{0} [\mathcal A_{s} ( \mathrm M_{s}) g] {\mathrm d} s  \, = \, \eta_{t} \, =\, \langle \mathrm m_{0}, g \rangle \,  \, =\, \int_{\mathbb R} g (x) {\mathrm d} \mathrm m_{0}(x)
\]
 for every $\, 0 \le t \le T\,$ and $\, g \in C^{2}_{c} (\mathbb R)\,$. Thus we claim $\, \mathrm M_{\cdot}\,$ satisfies the integral equation (\ref{eq: IntEq}). With the uniqueness in Proposition \ref{prop: 1} the last part of Proposition \ref{prop: 1.4} can be shown as in Lemmas 8-10 of {Oelschl\"ager} (1984). 
\end{proof}

Proposition \ref{prop: 1.4} describes the limiting system of (\ref{eq: PAu})-(\ref{eq: PAub}), in terms of the joint distribution of two adjacent particles of the directed chain structure (\ref{eq: NDu})-(\ref{eq: Fu}). 
Now let us fix $\,k (\ge 2)\,$ and define the empirical measure process $\, \mathrm M_{t,n}^{(j)}\,$, $\, t \ge 0\,$ of $\,j\,$ consecutive particles from (\ref{eq: PAu})-(\ref{eq: PAub}) 
\begin{equation} \label{eq: empMeasure2}
\mathrm M_{t,n}^{(j)} \, :=\, \frac{1}{\, n \, }\sum_{i=1}^{n} \delta_{(X_{t,i}^{(u)}, \, \ldots , \,X_{t,i+j-1}^{(u)})}  \, ; \quad j \, =\, 2, \ldots , k \, 
\end{equation} 
with $\, \mathrm M_{t,n}^{(1)} \, :=\, \mathrm m_{t,n}\,$ and $\, \mathrm M_{t,n}^{(2)} \equiv  \mathrm M_{t,n} \,$ as in (\ref{eq: empMeas}) in the space $\, \mathcal M (\Omega_{j})\,$ of probability measures on the topological space $\, \Omega_{j} \, :=\, D([0, T], (\mathcal M(\mathbb R^{j}), \lVert \cdot \rVert_{1}) )\,$ of c\'adl\'ag functions on $\,[0, T]\,$, equipped with the Skorokhod topology, where $\, (\mathcal M (\mathbb R^{j}), \lvert \cdot \rvert_{1}) \,$ is the space of probability measures on $\, \mathbb R^{j}\,$, a natural extension of  $\, (\mathcal M (\mathbb R^{2}), \lvert \cdot \rvert_{1}) \,$ defined in the above for $\, j \, =\, 2, \ldots ,  k \,$. We shall consider their limits.

By the construction and the law of large numbers for the initial empirical measure, as in (\ref{eq: convInit}),  
\begin{equation}
\text{Law} ( \mathrm M_{0,n}^{(k)}) \xrightarrow[n\to \infty]{} \delta_{\mathrm M_{0}^{(k)}} \quad \text{ weakly in }  \quad \mathcal M ( (\mathcal M (\mathbb R^{k}), \lVert \cdot \rVert_{1} ) ) \, ,  
\end{equation} 
where $\, \mathrm M_{0}^{(k)} \, :=\, \mathrm m_{0}^{\otimes k} \,$ is the $\,k\,$-tuple product measure of $\,\mathrm m_{0} \, =\,  \text{Law} (X_{0,1}^{(u)})\,$. For $\, j \, =\,  1, \ldots , k+1 \, $ let us denote by $\,\mathrm M_{\cdot}^{(j)}\,$ the joint probability measure induced by $\, (\overline{X}_{\cdot,1}, \ldots , \overline{X}_{\cdot, j}) \,$ in (\ref{eq: 3.4})-(\ref{eq: 3.5}), i.e., 
\begin{equation} \label{eq: j-induced}
(\mathrm M_{t}^{(j)}, t \ge 0)   \, :=\, \text{Law} (  (\overline{X}_{t,1}, \ldots , \overline{X}_{t, j}), t \ge 0 ) \, ; \quad j \, =\,  1, \ldots , k +1\, . 
\end{equation}

\begin{prop} \label{prop: 1.5} Fix $\, u \in [0, 1]\,$. Under the same assumptions as in Proposition \ref{prop: 1.4}, the law of the joint empirical measure process $\, {\mathrm M}_{\cdot, n}^{(k)}\,$ defined in $(\ref{eq: empMeasure2})$ converges in $\, \mathcal M (\Omega_{k}) \,$ to the Dirac measure concentrated in the deterministic measure-valued process $\, {\mathrm M}_{\cdot}^{(k)}\,$ in $($\ref{eq: j-induced}$)$, i.e.,  
\begin{equation} \label{eq: LLN2}
\lim_{n \to \infty} \text{\rm Law} ( {\mathrm{M}}_{t,n}^{(k)}, 0 \le t \le T) \, =\,  \delta_{( {\mathrm{M}}_{t}^{(k)}, \,\, 0 \, \le \, t \, \le \, T)} \quad \text{ in } \quad \mathcal M (\Omega_{k}) \, . 
\end{equation} 
All the consecutive marginals of $\, \mathrm{M}_{\cdot}^{(k)}\,$ are the same, i.e., 
\begin{equation} \label{eq: CT-I2}
\begin{split}
\mathrm{M}_{t}^{(j)}(\mathbb R \times  {\mathrm d} y_{1} \times \cdots \times {\mathrm d} y_{j-1} ) \, &=\,  {\mathrm{M}}_{t}^{(j)}({\mathrm d} y_{1} \times \cdots \times {\mathrm d} y_{j-1} \times \mathbb R) \, , 
\\
\mathrm{M}_{t}^{(j)}(\mathbb R^{2} \times  {\mathrm d} y_{1} \times \cdots \times {\mathrm d} y_{j-2} ) \, &=\,   {\mathrm{M}}_{t}^{(j)}(\mathbb R \times {\mathrm d} y_{1} \times \cdots \times {\mathrm d} y_{j-2} \times \mathbb R) \, =\, {\mathrm{M}}_{t}^{(j)}({\mathrm d} y_{1} \times \cdots \times {\mathrm d} y_{j-2} \times \mathbb R^{2}) \,,  
\\
 \ldots \, ,\quad \mathrm{M}_{t}^{(j)}(\mathbb R^{j-1} \times  {\mathrm d} y_{1} ) \, &=\, \mathrm{M}_{t}^{(j)} ( \mathbb R^{j-2} \times {\mathrm d} y_{1} \times \mathbb R) \, =\,  \cdots \, =\,  {\mathrm{M}}_{t}^{(j)}({\mathrm d} y_{1} \times \mathbb R^{j-1}) \,  
\end{split}
\end{equation} 
for $\, j \, =\, 2, \ldots , k \,$, $\, 0 \le t \le T \,$, and they also satisfy the system of integral equations  
\begin{equation} \label{eq: IntEq2}
\int_{\mathbb R^{j}} g(x) {\mathrm M}_{t}^{(j)} ({\mathrm d} x) \, =\,  \int_{\mathbb R^{j}} g(x) {\mathrm M}_{0}^{(j)}( {\mathrm d} x ) + \int^{t}_{0} [ \mathcal A_{s}^{(j)}( {\mathrm M}^{(j+1)} ) g] {\mathrm d} s \, ; \quad 0 \le t \le T\, , \, \,  j \, =\, 2,  \ldots , k-1 \, 
\end{equation} 
for every test function $\, g \in C^{2}_{c}(\mathbb R^{j}) \,$, where 
\begin{equation} \label{eq: Ainfinity2}
\begin{split}
\mathcal A_{s}^{(j)}({\mathrm M}^{(j+1)}) g \, :&=\, u \int_{\mathbb R^{j+1}} \sum_{\ell=1}^{j} \widetilde{b}(s, y_{\ell}, y_{\ell+1}) \frac{\partial g}{\partial x_{\ell}} ( y_{1}, \dots , y_{j}) {\mathrm M}^{(j+1)}_{s}( {\mathrm d} y_{1} \cdots {\mathrm d} y_{j+1}) 
\\
& \, \, {} + (1-u) \int_{\mathbb R^{j+1}}  \sum_{\ell=1}^{j} \widetilde{b} (s, y_{\ell}, y_{\ell+1})  \frac{\partial g}{\partial x_{\ell}} ( y_{1}, \dots , y_{j}) {\mathrm M}_{s}^{(j)}({\mathrm d}y_{1} \cdots {\mathrm d} y_{j}) {\mathrm m}_{s}( {\mathrm d} y_{j+1}) \, 
\\
& \, \, {} + \frac{1}{\, 2 \, } \int_{\mathbb R^{j}} \sum_{\ell=1}^{j} \frac{\partial^{2} g}{\partial x_{\ell}^{2}} ( y_{1}, \dots , y_{j}) {\mathrm M}_{s}^{(j)}({\mathrm d}y_{1} \cdots {\mathrm d} y_{j}) \, ; \quad 0 \le s \le T \, , \, \, j \, =\,  2, \ldots , k-1 \, .  
\end{split}
\end{equation} 
\end{prop}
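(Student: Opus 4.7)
The plan is to follow the blueprint of the proof of Proposition \ref{prop: 1.4} with straightforward modifications to handle $\,j\,$ consecutive particles instead of adjacent pairs. First I would extend the second-moment bounds of Lemma \ref{lm: 1}(a) from the marginal $\,\mathrm m_{t,n}\,$ and the pair measure $\,\mathrm M_{t,n}\,$ to the $\,k\,$-tuple measure $\,\mathrm M^{(k)}_{t,n}\,$: using the cyclic shift invariance (\ref{eq: symmetry2}) and the convention $\,X^{(u)}_{\cdot,n+j} \equiv X^{(u)}_{\cdot,j}\,$, one has $\,\sum_{i=1}^{n} \lVert (X^{(u)}_{\cdot,i}, \ldots , X^{(u)}_{\cdot, i+k-1})\rVert^{2} = k \sum_{i=1}^{n} \lvert X^{(u)}_{\cdot,i}\rvert^{2}\,$, so that the super- and sub-martingale bounds together with the conditional increment estimate (\ref{eq: diffbnd}) lift immediately to the $\,k\,$-tuple level. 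Prohorov's theorem combined with Theorem 8.6(b) of Ethier \& Kurtz (1986) then yields tightness of $\,(\mathrm M^{(k)}_{t,n}, 0 \le t \le T)_{n \ge 1}\,$ in $\,\mathcal M(\Omega_{k})\,$. The marginal consistency (\ref{eq: CT-I2}) follows for free from the shift invariance (\ref{eq: symmetry2}) of the finite system by passing to the limit.

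The heart of the argument is the martingale characterization. I would apply It\^o's formula to $\,g(X^{(u)}_{t,i}, X^{(u)}_{t,i+1}, \ldots , X^{(u)}_{t,i+j-1})\,$ for $\,g \in C^{2}_{c}(\mathbb R^{j})\,$, sum over $\,i \, =\, 1, \ldots , n\,$ and divide by $\,n\,$. The drift contribution along coordinate $\,\ell\,$ splits into the directed-chain piece $\,u\widetilde{b}(s, X^{(u)}_{s,i+\ell-1}, X^{(u)}_{s,i+\ell})\,$ and the mean-field piece $\,(1-u)n^{-1}\sum_{q=1}^{n}\widetilde{b}(s, X^{(u)}_{s,i+\ell-1}, X^{(u)}_{s,q})\,$. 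After averaging over $\,i\,$, the chain piece produces integration against the shifted empirical measure $\,\mathrm M^{(j+1)}_{s,n}\,$ (the extra coordinate $\,X^{(u)}_{s,i+j}\,$ arises from the $\,\ell = j\,$ term), while the mean-field piece factorizes into $\,\mathrm M^{(j)}_{s,n} \otimes \mathrm m_{s,n}\,$, yielding the two drift terms of $\,\mathcal A^{(j)}\,$ in (\ref{eq: Ainfinity2}); the quadratic variation gives the second-derivative term. Repeating the characteristic-function argument from the proof of Proposition \ref{prop: 1.4}, any limit point satisfies the system (\ref{eq: IntEq2}) together with the consistency (\ref{eq: CT-I2}).

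Finally I would upgrade relative compactness to the convergence (\ref{eq: LLN2}) via uniqueness of the limit, established through the explicit backward construction (\ref{eq: 3.4})-(\ref{eq: 3.5}). Starting from the pair $\,(\overline X_{\cdot, n}, \overline X_{\cdot, n+1}) \, =\, (X^{(u)}_{\cdot}, \widetilde X^{(u)}_{\cdot})\,$ given by Proposition \ref{prop: 1}, each backward step $\,j+1 \to j\,$ is a McKean-Vlasov equation in which $\,\overline X_{\cdot, j+1}\,$ plays the role of a known driving process, uniquely solvable in law by the Picard-iteration argument of Proposition \ref{prop: 1}. The joint law $\,\mathrm M^{(k)}_{\cdot}\,$ defined in (\ref{eq: j-induced}) is therefore uniquely determined, and by construction it satisfies (\ref{eq: IntEq2})-(\ref{eq: Ainfinity2}); combined with the pair-level uniqueness already obtained in Proposition \ref{prop: 1.4}, this pins down every limit point. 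The main obstacle is precisely the non-closed nature of (\ref{eq: IntEq2}): the generator $\,\mathcal A^{(j)}_{s}\,$ depends on $\,\mathrm M^{(j+1)}\,$, so the equation for the $\,j\,$-tuple marginal is never self-contained and uniqueness cannot be extracted from (\ref{eq: IntEq2}) alone. Resolution must rely on the infinite-dimensional nesting embodied in (\ref{eq: 3.4})-(\ref{eq: 3.5}) together with the base case $\,j = 2\,$ from Proposition \ref{prop: 1.4}.
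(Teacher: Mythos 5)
Your proposal follows the same route as the paper: extend the super/sub-martingale second-moment bounds of Lemma \ref{lm: 1}(a) from pairs to $k$-tuples (your identity $\sum_{i=1}^{n} \lVert (X^{(u)}_{\cdot,i}, \ldots , X^{(u)}_{\cdot, i+k-1})\rVert^{2} = k \sum_{i=1}^{n} \lvert X^{(u)}_{\cdot,i}\rvert^{2}$ is exactly the $k$-tuple analogue of the paper's observation $\sum_i \lvert X_{\cdot,i}\rvert^{2} = \tfrac12\sum_i(\lvert X_{\cdot,i}\rvert^{2}+\lvert X_{\cdot,i+1}\rvert^{2})$), invoke Prohorov and Ethier--Kurtz Theorem 8.6(b) for relative compactness, apply It\^o's formula to $g(X^{(u)}_{t,i},\ldots,X^{(u)}_{t,i+j-1})$, sum and average to identify the generator $\mathcal A^{(j)}_s(\mathrm M^{(j+1)})$, and use the characteristic-function martingale argument from Proposition \ref{prop: 1.4}. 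The only cosmetic difference is that you deduce the marginal consistency \eqref{eq: CT-I2} from the cyclic structure of the finite empirical measure $\mathrm M^{(j)}_{t,n}$ (its first and last $(j-1)$-coordinate marginals are both $\mathrm M^{(j-1)}_{t,n}$ by re-indexing), which then passes to the limit, whereas the paper derives it directly from the constraint structure of the backward construction \eqref{eq: 3.4}--\eqref{eq: 3.5}; the two are equivalent. Your closing discussion correctly identifies the delicacy of the limit identification — \eqref{eq: IntEq2} is not self-contained since $\mathcal A^{(j)}$ involves $\mathrm M^{(j+1)}$, so uniqueness must be pulled from the recursive construction and the $j=2$ base case — which is precisely what the paper defers to with its brief reference to the martingale argument of Proposition \ref{prop: 1.4} and Lemmas 8--10 of Oelschl\"ager (1984).
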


\begin{proof} We have shown (\ref{eq: LLN2}) in the case $\,k \, =\, 2\,$ in Proposition \ref{prop: 1.4}. The relative compactness proof of $\, (\mathrm M_{t, n}^{(j)}, 0 \le t \le T)\, $, $\, n \ge 1\,$ in $\, \mathcal M (\Omega_{j}) \,$ follows as in Proposition \ref{prop: 1.4} {\it mutatis mutandis} for $\,j \, =\, 1, \ldots, k \,$. The limit points of $\, (M_{t, n}^{(k)}, 0 \le t \le T)\,$ in (\ref{eq: LLN2}) as $\, n \to \infty\,$ are characterized by (\ref{eq: IntEq2}), because for every test function $\, g \in C^{2}_{c}(\mathbb R^{j})\,$, thanks to {It\^o}'s formula,  it follows from 
\[
\hspace{-4cm} {\mathrm d} g ( X_{t, i}^{(u)}, \ldots , X_{t, i+j-1}^{(u)}) \, =\, \sum_{\ell=1}^{j} \frac{\partial g}{\partial x_{\ell}} ( X_{t,i}^{(u)}, \ldots , X_{t, i+j-1}^{(u)}) {\mathrm d} X_{t, \ell + i - 1}^{(u)} 
\]
\[
\hspace{4cm} {} + \frac{1}{\, 2\, }\sum_{\ell=1}^{j} \sum_{m=1}^{j} \frac{\partial^{2} g}{\partial x_{\ell} \partial x_{m}} ( X_{t,i}^{(u)}, \ldots , X_{t, i+j-1}^{(u)}) {\mathrm d} \langle X_{\cdot,\ell+i-1}^{(u)}, X_{\cdot, m+i-1}^{(u)}\rangle_{t} 
\]
that  $\, \langle \mathrm M^{(j)}_{t, n} , g \rangle \, :=\, \int_{\mathbb R^{j}} g(x) {\mathrm d} \mathrm  M^{(j)}_{t, n}( {\mathrm d} x) \,$, $\,j \, =\,  1, \ldots , k\,$ satisfy that for $\, 0 \le t \le T\,$, $\,j \, =\, 1, \ldots , k\,$, 
\[
\hspace{-5cm} f( \langle \mathrm  M_{t,n}^{(j)}, g \rangle) - f( \langle \mathrm  M_{0, n}^{(j)}, g \rangle - \int^{t}_{0} f^{\prime} ( \langle \mathrm  M_{s, n}^{(j)}, g \rangle ) [\mathcal A^{(j)} \mathrm  M_{\cdot, n}^{(j+1)} g ] {\mathrm d} s 
\]
\[
{} -  \frac{1}{2 n^{2}} \int^{t}_{0} f^{\prime\prime} ( \langle \mathrm M^{(j)}_{s, n} , g \rangle ) \sum_{i=1}^{n} \sum_{\ell=1}^{j} \Big( \frac{ \partial g }{\partial x_{\ell}}\Big)^{2}{\mathrm d} s \, =\,  \int^{t}_{0} f^{\prime}( \langle \mathrm M_{s, n}^{(j)}, g \rangle ) \frac{1}{n}\sum_{i=1}^{n} \sum_{\ell=1}^{j} \Big( \frac{\partial g}{\partial x_{\ell}} \Big) {\mathrm d} W_{s, \ell}\, ,  
\]
for $\,f \in C^{2}(\mathbb R)\,$, where $\, [ \mathcal A^{(j)}_{s} \mathrm M_{\cdot, n}^{(j+1)} g ]\,$ is defined as in (\ref{eq: Ainfinity2}). The condition (\ref{eq: CT-I2}) follows from the construction of (\ref{eq: 3.4})-(\ref{eq: 3.5}). Applying the martingale argument again as in the proof of Proposition \ref{prop: 1.4}, we conclude the proof. 
\end{proof}


\begin{remark}[Propagation of chaos] \label{rem: PoC} If $\, u \, =\,  0\,$ and if the initial law of $\, (X_{0}^{(u)}, \widetilde{X}_{0}^{(u)})\,$ is a product measure, then $\,X_{\cdot}^{(u)}\,$ and $\, \widetilde{X}_{\cdot}^{(u)}\,$ in (\ref{eq: NDu})-(\ref{eq: Fu}) are independent, as in Remark \ref{rem: 1}, and hence, the joint law of $\,( \overline{X}_{\cdot, 1}, \ldots , \overline{X}_{\cdot, k}) \,$ in (\ref{eq: 3.4})-(\ref{eq: 3.5}) is the product measure. Thus in this case of $\,u \, =\,  0\,$, Proposition \ref{prop: 1.5} corresponds to the classic propagation of chaos result (see {Kac} (1958) for the original result for Boltzmann equation in Kinetic Theory, {McKean} (1967), {Tanaka} (1978), {Sznitman} (1984, 1991), {Graham} (1992), {M\'el\'eard} (1995), {Graham \& M\'el\'eard} (1997) for the advancement of  theory for McKean-Vlasov and Boltzmann equations, {Bolley, Guillin \& Malrieu} (2010), {Kolokoltsov} (2010), {Mischler \& Mouhot} (2013) and {Mischler, Mouhot \& Wennberg}  (2015) for recent developments of quantitative approach in propagation of chaos and references within them), where the limiting joint law  takes the product form. This means the dependence between each  particle $\,X_{\cdot, i}^{(u)}\,$ and another particle $\, X_{\cdot, j}^{(u)}\,$, $\, j \neq i \,$ diminishes in the limit, as $\, n \to \infty\,$. {Chong} \& {Kl\"uppelberg} (2017) investigate linear partial mean field systems based on fairly general network structures in which both, propagation of chaos and local dependency arises jointly. 
\hfill $\,\square\,$
\end{remark} 
\begin{remark}[Breaking invariance under permutations]  \label{rem: nPoC} When $\, u \in (0, 1] \,$, Proposition \ref{prop: 1.5} implies that the local directed chain dependence among consecutive particles is preserved even in the limit as the number of particles go to infinity, in general. Thus if $\, u \in (0, 1] \,$, the limiting system (\ref{eq: 3.4})-(\ref{eq: 3.5}) of (\ref{eq: PAu})-(\ref{eq: PAub}) does not propagate the stochastic chaos, in contrast to the case $\,u \, =\,  0\,$. 

This phenomenon can be seen as  a consequence of breaking the invariance under permutations in the finite particle system (\ref{eq: PAu})-(\ref{eq: PAub}), that is, the consecutive particles are invariant only under the shifts in one direction as in (\ref{eq: symmetry})-(\ref{eq: symmetry2}), and the finite particle system is not invariant under permutations, for example,  
\[
\text{Law} (X_{\cdot, 1}^{(u)}, X_{\cdot, 2}^{(u)}, \ldots , X_{\cdot, n}^{(u)}) \neq \text{Law} (X_{\cdot, n}^{(u)}, X_{\cdot, n-1}^{(u)}, \ldots , X_{\cdot, 1}^{(u)}) \,  
\]
unless $\, \widetilde{b}(\cdot, \cdot, \cdot) \equiv 0\,$. To our knowledge, our approach of breaking the invariance under permutations provides the first such instance of describing the dependence of the limiting system in the context of a particle system approximation to the solution of a nonlinear stochastic McKean-Vlasov equation. The simple case of a directed chain with its recursive structure sets itself apart from other network structures by allowing for a description by representative particles which solve a nonlinear McKean-Vlasov equation with distributional constraint. The analysis of this kind of ``matryoshka'' McKean-Vlasov equations might be of independent interest. 
\hfill $\,\square\,$
\end{remark}

\begin{prop}\label{prop: 2.0} In addition to the same assumptions for the functional $\,b\,$ as in Proposition \ref{prop: 1.2}, we assume that the marginal distribution $\, \mathrm m_{t}( {\mathrm d} y ) \, =\, \mathrm m^{\ast}_{t} ( {\mathrm d} y) \,$ of $\, ( X^{(u)}_{t},  t \ge 0) $ has the density $\, m_{t} (\cdot)\,$ $($i.e., $\, \mathrm m_{t}( {\mathrm d} y ) \, =\, m_{t}(y) {\mathrm d} y \,$, $\, y \in \mathbb R\,$ $)$  with $\, \int_{\mathbb R} \lvert y \rvert^{2} \mathrm m_{0}( {\mathrm d} y )  < \infty \,$ and assume there exists a constant $\,C_{T}\,$ such that 
\begin{equation}  \label{ineq: bm} 
\Big \lvert \widetilde{b}(t, x_{1}, y_{1}) \cdot \frac{\,m_{t}(x_{1})\,}{\,m_{t} (y_{1}) \,} - \widetilde{b} (t, x_{2}, y_{2}) \cdot \frac{\, m_{t}(x_{2}) \,}{\,m_{t}(y_{2})\,} \Big \rvert \le C_{T} ( \lvert x_{1} - x_{2}\rvert + \lvert y_{1} - y_{2}\rvert )  \, 
\end{equation} 
for every $\, (x_{i}, y_{i}) \in \mathbb R^{2} \,$, $\,i \, =\, 1, 2\,$, $\,0 \le t \le T\,$ and 
\begin{equation} \label{ineq: gc} 
\Big \lvert \widetilde{b}(t, x, y) \cdot \frac{\,m_{t}(x)\,}{\,m_{t} (y) \,}  \Big \rvert \le C_{T}( 1+ \lvert x \rvert + \lvert y \rvert ) \, 
\end{equation}
for every $\, (x, y) \in \mathbb R^{2}\,$, $\, 0 \le t \le T \,$. 
Then for the difference between $(\ref{eq: PAu})-(\ref{eq: PAub})$ and $(\ref{eq: 3.4})$-$(\ref{eq: 3.5})$ we have the estimate 
\begin{equation} \label{eq: prop2.0}
\sup_{n \ge 1}\frac{\,1\,}{\,\sqrt{ n}\,} \sum_{i=1}^{n} \mathbb E [ \sup_{0 \le s \le t} \lvert X_{s,i}^{(u)} - \overline{X}_{s,i} \rvert] < \infty \, . 
\end{equation}
\end{prop}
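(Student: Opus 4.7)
The plan is to compare the two systems via a synchronous coupling: on the same filtered space, drive $(X_{\cdot,i}^{(u)})$ and $(\overline{X}_{\cdot,i})$ with the \emph{same} Brownian motions $W_{\cdot,i}$ and take $X_{0,i}^{(u)}=\overline{X}_{0,i}$ for $i=1,\ldots,n$. Setting $\Delta_{s,i}:=X_{s,i}^{(u)}-\overline{X}_{s,i}$ and using the linear representation (\ref{eq: Lin}), the drift difference driving $\Delta_{\cdot,i}$ decomposes into three pieces: the chain-Lipschitz piece $u[\widetilde{b}(r,X^{(u)}_{r,i},X^{(u)}_{r,i+1})-\widetilde{b}(r,\overline{X}_{r,i},\overline{X}_{r,i+1})]$, the mean-field-Lipschitz piece $\tfrac{1-u}{n}\sum_{j=1}^n[\widetilde{b}(r,X^{(u)}_{r,i},X^{(u)}_{r,j})-\widetilde{b}(r,\overline{X}_{r,i},\overline{X}_{r,j})]$, and the mean-field fluctuation
\[
(1-u)\,\Psi_r^{n,i},\qquad \Psi_r^{n,i}:=\tfrac{1}{n}\sum_{j=1}^n \widetilde{b}(r,\overline{X}_{r,i},\overline{X}_{r,j})-\int_{\mathbb R}\widetilde{b}(r,\overline{X}_{r,i},y)\,m_r(y)\,dy.
\]

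Writing $A_s^i:=\mathbb E[\sup_{0\le r\le s}|\Delta_{r,i}|]$, the Lipschitz hypothesis (\ref{eq: Lip}) applied to the first two pieces yields
\[
A_s^i\le C_T\int_0^s\Bigl[u(A_v^i+A_v^{i+1})+(1-u)\bigl(A_v^i+\tfrac{1}{n}\textstyle\sum_j A_v^j\bigr)+\mathbb E|\Psi_v^{n,i}|\Bigr]dv.
\]
Summing over $i=1,\ldots,n$ (with $X_{\cdot,n+1}^{(u)}=X_{\cdot,1}^{(u)}$ and $\overline{X}_{\cdot,n+1}=\widetilde{X}_\cdot^{(u)}$; the boundary discrepancy at $i=n$ contributes only an $O(1)$ term absorbed after division by $\sqrt n$) would collapse both neighbour-averages into $\sum_i A_v^i$, producing $\sum_i A_s^i\le 2C_T\int_0^s\sum_i A_v^i\,dv+C_T\int_0^s\sum_i\mathbb E|\Psi_v^{n,i}|\,dv$. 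Gronwall's lemma then reduces the claim (\ref{eq: prop2.0}) to the single requirement
\[
\sup_{n\ge 1}\frac{1}{\sqrt n}\sum_{i=1}^n \mathbb E|\Psi_v^{n,i}|<\infty\qquad\text{uniformly in } v\in[0,T],
\]
that is, $\mathbb E|\Psi_v^{n,i}|=O(n^{-1/2})$ on average in $i$.

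This fluctuation estimate is the main obstacle, and is precisely what forces the extra hypotheses (\ref{ineq: bm})-(\ref{ineq: gc}) beyond those of Proposition~\ref{prop: 1.2}. For $u\in(0,1]$ the chain variables $(\overline{X}_{v,j})_{j=1}^n$ are \emph{not} independent: they are coupled through the backward recursion (\ref{eq: 3.5}), so the naive $L^2$ variance bound for i.i.d.\ empirical averages does not apply. The role of the modified drift $\widetilde{b}(t,x,y)\,m_t(x)/m_t(y)$ is to serve as the Girsanov weight that decouples $\overline{X}_{\cdot,j}$ from $\overline{X}_{\cdot,j+1}$: under a suitable change of measure it converts the chain-coupled dynamics into an i.i.d.\ sample from the common marginal $\mathrm m_v$, with the ratio $m_t(x)/m_t(y)$ arising as the Radon--Nikodym density between the two conditional laws at adjacent indices. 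Conditions (\ref{ineq: bm})-(\ref{ineq: gc}), combined with the $L^2$ moment bound from the $p=2$ extension of Proposition~\ref{prop: 1.2} (Remark~\ref{rem: 2.4}), ensure precisely that the associated exponential martingale has uniformly bounded $L^2$ moments. A Cauchy--Schwarz / variance bound on the decoupled empirical average then produces the required $\mathbb E|\Psi_v^{n,i}|=O(n^{-1/2})$ decay, which together with the Gronwall reduction above yields (\ref{eq: prop2.0}).
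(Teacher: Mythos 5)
Your synchronous coupling, drift decomposition, and Gronwall reduction to the fluctuation bound
\[
\sup_{n\ge 1}\frac{1}{\sqrt n}\sum_{i=1}^{n}\mathbb E\Big[\int_0^T \frac{1}{n}\Big|\sum_{j=1}^n \overline{b}(s,\overline X_{s,i},\overline X_{s,j})\Big|\,{\mathrm d}s\Big]<\infty
\]
match the paper's argument up to and including (\ref{eq: 3.17a}) and the subsequent Gronwall step, and your treatment of the boundary discrepancy as an $O(1)$ term is also right. The gap is in how you propose to prove the fluctuation bound. You attribute the ratio $m_t(x)/m_t(y)$ to a Girsanov change of measure that ``converts the chain-coupled dynamics into an i.i.d.\ sample from the common marginal,'' with an exponential martingale of bounded $L^2$ moment, and then a variance bound on the decoupled average. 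That mechanism does not work: the Radon--Nikodym derivative between the law of the $n$-particle chain $(\overline X_{\cdot,1},\dots,\overline X_{\cdot,n})$ and the $n$-fold product of the marginal law would be a product of $n$ one-step likelihood ratios, whose $L^2$ norm generically grows exponentially in $n$ (it is uniformly bounded only if the joint law already factorizes, i.e.\ $u=0$). Moreover $m_t(x)/m_t(y)$ is a ratio of \emph{marginal} densities, not the Radon--Nikodym derivative of the two adjacent conditional laws — the latter would involve the joint density.

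What the paper actually does in Appendix~\ref{sec: proof1} is quite different: it exploits the discrete-index Markov chain structure $\overline X_{\cdot,i}={\bm\Phi}(\cdot,\mathrm m^\ast,\overline X_{\cdot,i+1},W_{\cdot,i})$ and shows the covariances $\mathbb E[\overline b(s,\overline X_{s,i},\overline X_{s,j})\,\overline b(s,\overline X_{s,i},\overline X_{s,k})]$ decay like $[c^{|k-j|}/(|k-j|)!]^{1/2}$, uniformly in $n$, which makes $\sum_{j,k}\mathbb E[\cdot]=O(n)$ and hence $\|\tfrac1n\sum_j\overline b(s,\overline X_{s,i},\overline X_{s,j})\|_{L^2}=O(n^{-1/2})$. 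The chain decay (\ref{eq: 5.2}) covers $j<k<i$. To handle the other orderings one needs decay in the reversed index direction, and that is where (\ref{ineq: bm})--(\ref{ineq: gc}) enter: they guarantee that the index-reversed chain, with modified drift $\widetilde b(t,y,x)\,m_t(y)/m_t(x)$ as in (\ref{eq: sNDu}), is a well-posed directed chain SDE with Lipschitz, linear-growth coefficients, and that its two-point law is the time-reversal $\,m_s(y_1)\widehat{\mathrm M}_s({\mathrm d}y_1{\mathrm d}y_2)=m_s(y_2)\mathrm M_s({\mathrm d}y_1{\mathrm d}y_2)\,$ of $\mathrm M_\cdot$. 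So the ratio of marginals appears as the familiar transition-reversal factor for a Markov chain, not as a Girsanov weight. You should replace the Girsanov-decoupling heuristic with this covariance-decay / time-reversal argument to make the fluctuation estimate rigorous.
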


\begin{proof}
Substituting 
\begin{equation*}
\begin{split}
\widetilde{b}(s, X_{s,i}^{(u)}, X_{s,j}^{(u)}) \, & =\,   (\widetilde{b}(s, X_{s,i}^{(u)}, X_{s,j}^{(u)}) - \widetilde{b}(s, \overline{X}_{s,i}, X_{s,j}^{(u)})) 
\\
& {} \quad + (\widetilde{b}(s, \overline{X}_{s,i}, X_{s,j}^{(u)}) - \widetilde{b}(s, \overline{X}_{s,i}, \overline{X}_{s,j})) + 
\widetilde{b}(s,  \overline{X}_{s,i},  \overline{X}_{s,j}) 
\end{split}
\end{equation*}
into the differences 
\begin{equation}
\begin{split}
X_{t,i}^{(u)} - \overline{X}_{t,i} \, &=\,  u \int^{t}_{0} ( \widetilde{b} (s, {X}_{s,i}^{(u)}, {X}_{s,i+1}^{(u)}) - \widetilde{b} (s, \overline{X}_{s,i}, \overline{X}_{s,i+1})) {\mathrm d} s 
\\
& {} \quad + (1-u) \int^{t}_{0} \Big( \frac{\,1\,}{\,n\,}\sum_{j=1}^{n} \widetilde{b}(s, X_{s,i}^{(u)} , X_{s,j}^{(u)}) - \int_{\mathbb R} \widetilde{b}(s, \overline{X}_{s,i}, y) {\mathrm m}^{\ast}( {\mathrm d} y) \Big) {\mathrm d} s 
\end{split}
\end{equation}
for $\, i \, =\, 1, \ldots , n-1\,$, and the difference  
\begin{equation}
\begin{split}
X_{t,n}^{(u)} - \overline{X}_{t,n} \, &=\,  u \int^{t}_{0} ( \widetilde{b} (s, {X}_{s,n}^{(u)}, {X}_{s,1}^{(u)}) - \widetilde{b} (s, \overline{X}_{s,n}, \overline{X}_{s,n+1})) {\mathrm d} s 
\\
& {} \quad + (1-u) \int^{t}_{0} \Big( \frac{\,1\,}{\,n\,}\sum_{j=1}^{n} \widetilde{b}(s, X_{s,n}^{(u)} , X_{s,j}^{(u)}) - \int_{\mathbb R} \widetilde{b}(s, \overline{X}_{s,n}, y) {\mathrm m}^{\ast}( {\mathrm d} y) \Big) {\mathrm d} s 
\end{split} 
\end{equation}
at the boundary for $\, 0 \le t \le T\,$, applying the triangle inequality and (\ref{eq: Lip}), and then taking the supremum, we obtain   
\begin{equation} \label{eq: 3.17a} 
\begin{split} 
& \sum_{i=1}^{n} \sup_{0 \le t \le T} \lvert X_{t, i}^{(u)} - \overline{X}_{t,i}\rvert \\
  \le \, & 2 C_{T} \int^{T}_{0}  \sum_{i=1}^{n} \sup_{0 \le t \le s} \lvert X_{t, i}^{(u)} - \overline{X}_{t,i}\rvert {\mathrm d} s + 2 C_{T} u \int^{T}_{0} ( \lvert X_{s,1}^{(u)} - \overline{X}_{s, n+1} \rvert - \lvert X_{s,1}^{(u)} - \overline{X}_{s, 1}\rvert) {\mathrm d} s \\
& {} \quad + (1 - u) \int^{T}_{0} \frac{1}{\, n \, } \sum_{i=1}^{n} \Big \lvert  \sum_{j=1}^{n} \overline{b} ( s, \overline{X}_{s, i}, \overline{X}_{s, j}) \Big \rvert {\mathrm d} s 
\end{split}
\end{equation}
\[
 \le \,  2 C_{T} \int^{T}_{0} \sum_{i=1}^{n} \sup_{0 \le t \le s} \lvert X_{t, i}^{(u)} - \overline{X}_{t,i}\rvert {\mathrm d} s + 2 C_{T} u \int^{T}_{0}  \lvert  \overline{X}_{s, n+1}  - \overline{X}_{s, 1} \rvert {\mathrm d} s 
\]
\[
{} \quad + (1 - u) \int^{T}_{0} \frac{1}{\, n \, } \sum_{i=1}^{n} \Big \lvert  \sum_{j=1}^{n} \overline{b} ( s, \overline{X}_{s, i}, \overline{X}_{s, j}) \Big \rvert {\mathrm d} s\, , 
\]
where we set $\, \overline{b}(s, x, z) \, :=\,  \widetilde{b}(s, x, z) - \int_{\mathbb R} \widetilde{b}(s, x, y) {\mathrm m}^{\ast} ( {\mathrm d} y) \, $ for $\, x, z \in \mathbb R\,$, $\, 0 \le s \le T\,$. Here we used $\, \lvert x \rvert - \lvert y \rvert \le \lvert x - y \rvert\,$, $\, x, y \in \mathbb R\,$ in the last inequality, and this way we take care of the boundary particle. Note that $\, X_{\cdot, 1}^{(u)} \equiv X_{\cdot, n+1}^{(u)}\,$ but $\, \overline{X}_{\cdot,1} \not \equiv \overline{X}_{\cdot, n+1}\,$. 

After using {Gronwall}'s lemma, taking  expectation, we obtain 
\[
\sum_{i=1}^{n} \mathbb E [ \sup_{0 \le t \le T} \lvert X_{t, i}^{(u)} - \overline{X}_{t,i}\rvert] \le 2C_{T} e^{2C_{T} T} \mathbb E \Big[ \int^{T}_{0} \lvert  \overline{X}_{s, n+1}  - \overline{X}_{s, 1} \rvert {\mathrm d} s + \int^{T}_{0} \frac{1}{\, n \, } \sum_{i=1}^{n} \Big \lvert  \sum_{j=1}^{n} \overline{b} ( s, \overline{X}_{s, i}, \overline{X}_{s, j}) \Big \rvert {\mathrm d} s\Big]  \, , 
\]
where there exists some constant $\, c > 0\,$ such that we evaluate the first term 
\begin{equation}
\mathbb E \Big [ \int^{T}_{0}  \lvert  \overline{X}_{s, n+1}  - \overline{X}_{s, 1} \rvert {\mathrm d} s \Big]  \le \mathbb E \Big [ \int^{T}_{0} (  \sup_{0 \le u \le T} \lvert  \overline{X}_{u, n+1} \rvert  + \sup_{ 0 \le u \le T} \lvert \overline{X}_{u, 1} \rvert ) {\mathrm d} s \Big] \le 2 T( \mathbb E [  \lvert \overline{X}_{0,1}\rvert]  + c) e^{c T} \, , 
\end{equation}
by (\ref{eq: 1.2b}) in Proposition \ref{prop: 1.2} and then with (\ref{ineq: bm})-(\ref{ineq: gc}) we evaluate the second term 
\begin{equation} \label{eq: 3.17}
\sum_{i=1}^{n}\mathbb E \Big [ \int^{T}_{0}\frac{\,1\,}{\,n\,}  \Big \lvert \sum_{j=1}^{n} \overline{b} ( s, \overline{X}_{s, i}, \overline{X}_{s, j}) \Big \rvert {\mathrm d} s  \Big] \le \sum_{i=1} ^{n} \int^{T}_{0} \!\! \Big( \mathbb E \Big [\frac{\,1\,}{\,n^{2}\,}  \Big \lvert \sum_{j=1}^{n} \overline{b} ( s, \overline{X}_{s, i}, \overline{X}_{s, j}) \Big \rvert^{2} \Big] \Big)^{1/2}   {\mathrm d} s  \le {\,c \sqrt{n}\,} 
\end{equation}
by the {Cauchy-Schwarz} inequality and the (Markov) chain structure of the particle system $\, \overline{X}_{\cdot, i}\,$, $i \, =\, 1, \ldots , n$, that is, by the map $\, {\bm \Phi}\,$ in (\ref{eq: Phi0}), $\, \overline{X}_{\cdot, i} \, =\, {\bm \Phi} ( \cdot , ({\mathrm m}^{\ast}_{s})_{0 \le s \le \cdot}, ( \overline{X}_{s, i+1})_{0 \le s \le \cdot} , (W_{s,i})_{0 \le s \le \cdot}) \,$ for $\, i \, =\, n-1, \ldots , 1\,$. Note that when $\,u \in (0, 1] \,$, $\, \overline{X}_{\cdot, i}\,$ and $\, \overline{X}_{\cdot, j}\,$ are dependent for $\, i \neq j\,$, while $\, \overline{X}_{\cdot, i+1}\,$ and $\, W_{\cdot, i}\,$ are independent for $\, i \, =\, n-1, \ldots , 1\,$. An intuitive  interpretation of the last inequality in (\ref{eq: 3.17}) is that the dependence between $\, \overline{X}_{s , i}\,$ and $\, \overline{X}_{s, j}\,$ decays sufficiently fast, as $\, \lvert i - j \rvert \to \infty\,$. Its precise statement and some technical details are given in Appendix \ref{sec: proof1}. 

Finally, combining these inequalities, we conclude the proof of (\ref{eq: prop2.0}) by 
\[
\sup_{n \ge 1}\frac{\,1\,}{\,\sqrt{n}\,} \sum_{i=1}^{n} \mathbb E [ \sup_{0 \le t \le T} \lvert X_{t, i}^{(u)} - \overline{X}_{t,i}\rvert] \le2C_{T} e^{2C_{T} T}  \sup_{n \ge 1} \big( \frac{\, 2 T\, }{\sqrt{n}}( \mathbb E [  \lvert \overline{X}_{0,1}\rvert]  + c) e^{c T} + c \big) < \infty \, . 
\]
\end{proof}

\begin{remark} The fluctuation results (central limit theorem and large deviations) suggested from Proposition \ref{prop: 2.0} are ongoing research topics. We conjecture that Propositions \ref{prop: 1.4}-\ref{prop: 1.5} still hold if we replace (\ref{eq: PAub}) by another process,  e.g., a standard Brownian motion, as long as the effect of the boundary process on the first two (or $\,k\,$) components in (\ref{eq: PAu}) diminishes sufficiently fast in the limit. The additional conditions (\ref{ineq: bm})-(\ref{ineq: gc}) are used to evaluate the decay of asymptotic covariance between $\, \overline{X}_{\cdot, i}\,$ and $\,\overline{X}_{\cdot, i + n}\,$ as $\, n \to \infty\,$ (see Appendix \ref{sec: proof1}). In particular, the dependence between the first particle $\, \overline{X}_{\cdot, 1}\,$ and the last particle $\, \overline{X}_{\cdot, n}\,$ of the directed chain diminishes in the limit. It is an ongoing project to see whether one may relax these conditions (\ref{ineq: bm})-(\ref{ineq: gc}). 
\hfill $\,\square\,$
\end{remark}


\section{Detecting mean-field in the presence of directed chain interaction} \label{sec: CaseStudy}

In the weak solution $\,(\Omega, \mathcal F, (\mathcal F_{t}), \mathbb P) \,$, $\, (X_{\cdot}, \widetilde{X}_{\cdot}) \, :=\, (X_{\cdot}^{(u)}, \widetilde{X}_{\cdot}^{(u)}) \,$, $\,B_{\cdot}\,$ from Proposition \ref{prop: 1}, the parametric value $\,u\,$ in (\ref{eq: NDu}) indicates how much the particle $\,X_{\cdot}\,$ depends on the neighborhood particle $\, \widetilde{X}_{\cdot}\,$ in the directed chain, and $\, (1-u)\,$ indicates how much it depends on its law $\, \text{Law} ( X_{t}) \,$ for every $\,t \ge 0\,$. Let us consider the following detection problem of a single observer. 

\medskip 

\noindent {\bf Detection Problem.} Suppose that an observer only observes the single path $\,X_{t}\,$, $\,t \ge 0\,$ but does neither  know the values $\,u \in [0, 1]\,$ nor $\, \widetilde{X}_{t} \,$, $\,t \ge 0\,$ in (\ref{eq: NDu})-(\ref{eq: INITIAL}) under the same assumptions as Proposition \ref{prop: 1}. Only given the filtration $\, \mathcal F^{X}_{t} \, :=\, {\bm \sigma} ( X_{s}, 0 \le s \le t ) \vee \mathcal N\,$, $\, t\ge 0\,$, augmented by the null sets $\,\mathcal N\,$, can the observer detect the value $\,u \in [0, 1]\,$ (and hence, the effect $\, 1 - u \,$ of mean-field)? 

\medskip 

In order to discuss this problem, it is natural to extend our consideration to the solution $\, ( \overline{X}_{t, 1},  \ldots , \overline{ X}_{t, n+1}) \,$, $\, t \ge 0 \,$ of the system of the directed chain stochastic differential equations  
\begin{equation} \label{eq: 4.1}
\begin{split}	
{\mathrm d} \overline{X}_{t, i} \, =\, b(t, \overline{X}_{t, i}, \, F_{t, i} ) \, {\mathrm d} t + {\mathrm d} B_{t, i} \,  ; \quad i \, =\,  1, \ldots , n\, , \, \, t \ge 0  \, , 
\end{split}
\end{equation} 
as the solution to the system of the directed chain stochastic equations in (\ref{eq: 3.4})-(\ref{eq: 3.5}) in section \ref{sec: InfParticle}, for arbitrary $\,n \in \mathbb N\,$, where $\, F_{s, i} \,$ is the random measure similar to (\ref{eq: Fu}), i.e., 
\begin{equation} \label{eq: Fu4} 
{F}_{s, i} \, :=\, u \cdot \delta_{ \overline{X}_{s, i+1} } + (1-u) \cdot \mathcal L_{ \overline{X}_{s, i}} \, , \quad i \, =\,  1, \ldots , n \, , \, \, s \ge 0 \, 
\end{equation} 
with the distributional constraints, such that the initial values $\, ( \overline{X}_{0, 1},  \ldots  , \overline{X}_{0, n+1})\,$ are independently, identically distributed with finite second moments; The marginal law is identical  
\begin{equation} \label{eq: NDlaw4}
\text{Law} ( \{ \overline{X}_{t, i}, t \ge 0 \}) \, =\, \text{Law} (\{ \overline{X}_{t, 1}, t \ge 0 \})  \, ; \quad i \, =\,   1, 2, \ldots , n+1 \, , 
\end{equation}
and the following independence relationships hold for independent standard Brownian motions $\, (B_{t, 1}, \ldots , B_{t, n})\,$, $\, t \ge 0\,$
\begin{equation} \label{eq: NDlaw5}
{\bm \sigma} (\{ ( \overline{X}_{t, n+1}, \ldots , \overline{X}_{t, i+1}), t \ge 0 \}, \overline{X}_{0, i}) \perp\!\!\!\perp {\bm \sigma} ( \{B_{t, i}, t \ge 0 \}) \, ; \quad i \, =\,   n, \ldots , 1 \, . 
\end{equation}
The weak solution $\, ( \overline{X}_{\cdot, 1}, \ldots , \overline{ X}_{\cdot, n+1}) \,$ can be constructed as we considered in section \ref{sec: InfParticle}.  Namely, we solve for $\, ( \overline{X}_{\cdot, n}, \overline{X}_{\cdot, n+1})\,$ in (\ref{eq: 4.1}) first as in Proposition \ref{prop: 1} and then solve recursively for the directed chain system $\, ( \overline{X}_{\cdot, k}, \overline{X}_{\cdot, k+1}, \ldots , \overline{X}_{\cdot, n+1}) \,$ for $\, k \, =\,  n, \ldots , 1\,$. We redefine 
\begin{equation} \label{eq: 4.1pair}
\, (X_{\cdot}, \widetilde{X}_{\cdot})  \, =\, ( X_{\cdot}^{(u)}, \widetilde{X}_{\cdot}^{(u)})\, :=\, (\overline{X}_{\cdot, 1}, \overline{X}_{\cdot , 2}) \,
\end{equation}
from the first two elements of $\,( \overline{X}_{\cdot, 1}, \overline{X}_{\cdot,2 },  \ldots , \overline{ X}_{\cdot, n+1}) \,$, and the observer only observes $\,   X_{\cdot} \, =\, \overline{X}_{\cdot, 1}\,$. 

\begin{remark}[Variations of the detection problem] The setup of the detection problem would be different, if the observer observes the whole $\,(n+1)\,$ particles $\, (\overline{X}_{\cdot, 1}, \ldots , \overline{X}_{\cdot, n+1} )\,$ in (\ref{eq: 4.1}) or if the observer observes the pre-limit system (\ref{eq: NDMF}) of $\,n\,$ particles. It would be interesting, yet out of scope of this current paper, to study the detection methods and to quantify the information gain/loss for different setups, and moreover to detect the presence of the propagation of chaos. These interesting open problems were  suggested by the  editors and the reviewers, while this paper was revised. \hfill $\,\square\,$
\end{remark}

Let us define the stochastic exponential 
\begin{equation} \label{eq: SEZ}
Z_{t} \, :=\,  \exp \Big ( - \int^{t}_{0} b(s, X_{s}, F_{s}) {\mathrm d} B_{s} - \frac{1}{\, 2\, }\int^{t}_{0} \lvert b(s, X_{s}, F_{s})\rvert^{2} {\mathrm d} s \Big) \, ; \quad t \ge 0 \, , 
\end{equation}
where $\, F_{s} \, :=\, F_{s, 1} \, =\, F_{s}^{(u)}\,$ from (\ref{eq: Fu}) and $\, B_{\cdot}\, =\,  B_{\cdot, 1}\,$, which satisfies $\, {\mathrm d} Z_{t} \, =\,  - Z_{t} b(t, X_{t}, F_{t}) {\mathrm d} B_{t}$, $\,t \ge 0\,$. 
Here $\,\{Z_{\cdot}; \mathcal F_{\cdot}\}\,$ is a nonnegative, local martingale and hence, is a supermartingale with $\, \mathbb E [ Z_{t} ] \le 1\,$, $\, t\ge 0 \,$. 

  If the Novikov condition (e.g., Corollary 3.5.13 of Karatzas \& Shreve (1991)) for $\, Z_{\cdot}\,$ holds, i.e., 
\begin{equation} \label{eq: Novikov} 
\mathbb E \Big[ \exp \Big( \frac{1}{\, 2 \, } \int^{t}_{0} \lvert b(s, X_{s}, F_{s})\rvert^{2} {\mathrm d} s \Big) \Big] < \infty \, ; \quad t \ge 0 \, , 
\end{equation}
then $\,Z_{\cdot}\,$ is a martingale. Since it is not always easy to verify the Novikov condition directly except for the Gaussian case (e.g., see section \ref{sec: iOU} below) or for the bounded functional case (i.e., the functional $\, \widetilde{b}\,$ in (\ref{eq: Lin}) is bounded), we shall discuss the martingale property of $\,Z_{\cdot}\,$. 

Let us assume the finite moment condition $\, \mathbb E [ \lvert X_{0}\rvert^{2}] < \infty\,$ for the initial distribution $\, \theta\,$ in (\ref{eq: INITIAL}). Then under the linear growth condition (\ref{eq: 1.2a}) and this finite second moment condition, 
as in Proposition \ref{prop: 1.2}, we have $\, 
\mathbb E [ \sup_{0 \le t \le T} \lvert X_{t}\rvert^{2}] < + \infty \, $ (see Remark \ref{rem: 2.4}), and hence, combining with the inequalities $\, \lvert b(s, X_{s}, F_{s}) \rvert \le C_{T} ( 1 + \lvert X_{s}\rvert + u \lvert \widetilde{ X}_{s}\rvert + ( 1- u ) \mathbb E [ \lvert X_{s}\rvert]) \,$, $\, 0 \le s \le T \,$ and $\, (a_{1}+a_{2}+a_{3}+a_{4})^{2} \le 4 (a_{1}^{2} + a_{2}^{2} + a_{3}^{2}+a_{4}^{2})\,$ for nonnegative reals $\,a_{i} \ge 0\,$, we obtain 
\begin{equation} \label{eq: Eb2}
\mathbb E \Big[ \int^{T}_{0} \lvert b(s, X_{s}, F_{s}) \rvert^{2} {\mathrm d} s \Big] \le 4 C_{T}^{2} T ( 1 + 3 \mathbb E [ \sup_{0 \le s \le T} \lvert X_{s}\rvert^{2} ] ) < \infty \, . 
\end{equation}

Following the proof of Lemma 3.9 (and see also Exercise 3.11) of Bain \& Crisan (2009), in order to show $\, \mathbb E [ Z_{t} ] \, =\, 1	\,$, $\, t \ge 0 \,$, we consider for $\,\varepsilon > 0\,$, 
\begin{equation}  \label{eq: 4.2}
\frac{Z_{t}}{\, 1 + \varepsilon Z_{t}\, } \, =\,  \frac{1}{\, 1  + \varepsilon \, } + \int^{t}_{0} \frac{Z_{s} b(s, X_{s}, F_{s}) }{(1+\varepsilon Z_{s})^{2}}  {\mathrm d} B_{s} - \int^{t}_{0} \frac{ \varepsilon Z_{s}^{2} \lvert b(s, X_{s}, F_{s}) \rvert ^{2}}{\, ( 1 + \varepsilon Z_{s})^{3}\, }  {\mathrm d} s  \, ; \quad t \ge 0 \, , 
\end{equation}
and its expectation for $\, 0 \le t \le T\,$
\begin{equation} \label{eq: EZ1}
\begin{split}
1 \ge \mathbb E [ Z_{t}] \ge \mathbb E \Big[ \frac{Z_{t}}{\, 1 + \varepsilon Z_{t}\, } \Big] \, & =\,  \frac{1}{\, 1 + \varepsilon\, } - \mathbb E \Big[ \int^{t}_{0} \frac{ \varepsilon Z_{s}^{2}\lvert b(s, X_{s}, F_{s}) \rvert  ^{2}  }{\, ( 1 + \varepsilon Z_{s})^{3}\, } {\mathrm d} s \Big] \, \\
\end{split}
\end{equation}
where we used (\ref{eq: Eb2}) to show that the stochastic integral in (\ref{eq: 4.2}) is indeed a martingale and hence its expectation is zero. Thus, in order to verify $\,\mathbb E [ Z_{t} ] \, =\,  1\,$, $\, t\ge 0 \,$, by letting $\,\varepsilon \downarrow 0\,$ in (\ref{eq: EZ1}) and by the dominated convergence theorem, it suffices to check 
\begin{equation} \label{eq: EZb2}
\mathbb E \Big[ \int^{T}_{0}Z_{s} \lvert b(s, X_{s}, F_{s}) \rvert^{2}  {\mathrm d} s \Big]  < \infty \, ; \quad T > 0 \, . 
\end{equation}

Note that since $\,F_{s} \, =\, F_{s}^{(u)}\,$ in (\ref{eq: Fu}) depends on $\, \widetilde{X}_{\cdot}\,$, under the linear growth condition (\ref{eq: 1.2a}) on the functional $\,b\,$, the condition (\ref{eq: EZb2}) is reduced to estimates for both 
\begin{equation} \label{eq: EZX} 
\, \mathbb E \Big[ \int^{T}_{0} Z_{s} \lvert X_{s}\rvert ^{2} {\mathrm d}s  \Big] \, < \infty \quad  \text{ and } \quad \, \mathbb E \Big[ \int^{T}_{0} Z_{s} \lvert \widetilde{X}_{s}\rvert ^{2} {\mathrm d} s \Big] \, < \infty \, ; \quad T > 0 \, ,  
\end{equation} 
where the joint distribution of $\, (X_{\cdot}, Z_{\cdot}) \,$ is not the same as that of $\, (\widetilde{X}_{\cdot}, Z_{\cdot}) \,$.

\begin{prop} \label{eq: Girsanov}
In addition to the assumptions in Proposition \ref{prop: 1}, let us assume $\,\mathbb E [ \lvert X_{0} \rvert^{2}] < + \infty \,$. Then the first inequality in $(\ref{eq: EZX})$ holds. Moreover, for $\, i \, =\,  2, \ldots , n \,$ and for every $\, T > 0\,$, 
\[
\mathbb E \Big[ \int^{T}_{0} Z_{s} \lvert X_{s, i+1}\rvert^{2} {\mathrm d} s \Big] < \infty 
\quad \text{ implies }  \quad 
\mathbb E \Big[ \int^{T}_{0} Z_{s} \lvert X_{s, i}\rvert^{2} {\mathrm d} s \Big ] < \infty \, , 
\]
where $\, (X_{\cdot, 1}, \ldots , X_{\cdot, n+1}) \,$ is defined from $(\ref{eq: 4.1})$-$(\ref{eq: NDlaw5})$ with $(\ref{eq: 4.1pair})$. In particular, if for every $\, T > 0\,$, 
$\, \mathbb E [ \int^{T}_{0} Z_{s} \lvert X_{s, n+1}\rvert^{2} {\mathrm d} s ] < \infty\,$, 
then the second inequality in $(\ref{eq: EZX})$ holds. 
\end{prop}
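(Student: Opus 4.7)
The plan is to apply It\^o's formula to $Z_t X_{t, i}^2$ for each $i$, combined with a localization--Fatou argument and \textsc{Gronwall}'s inequality. Since $\{Z_\cdot, \mathcal F_\cdot\}$ is a nonnegative supermartingale with $\E[Z_t] \le 1$, and $X_{\cdot, i}$, $Z_\cdot$, and $\int_0^\cdot \lvert b(s, X_s, F_s)\rvert^2 \, {\mathrm d} s$ are almost surely finite (by Proposition~\ref{prop: 1.2}, Remark~\ref{rem: 2.4}, and (\ref{eq: Eb2})), one can choose a localizing sequence $\sigma_N \uparrow \infty$, e.g.\ the first time any of these quantities exceeds $N$, for which the stopped local martingales arising below are true martingales.

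For the first inequality in (\ref{eq: EZX}), the key observation is that both $Z_\cdot$ and $X_\cdot = \overline X_{\cdot, 1}$ are driven by the same Brownian motion $B_\cdot = B_{\cdot, 1}$. The cross-variation $2 X_t \, {\mathrm d} \langle Z, X\rangle_t = -2 Z_t X_t b(t, X_t, F_t) \, {\mathrm d} t$ therefore exactly cancels the drift $2 Z_t X_t b(t, X_t, F_t) \, {\mathrm d} t$ coming from $X_t^2$, leaving the clean identity
\[
{\mathrm d}(Z_t X_t^2) \, =\, Z_t \, {\mathrm d} t + \bigl( 2 Z_t X_t - Z_t X_t^2 b(t, X_t, F_t) \bigr) \, {\mathrm d} B_t.
\]
Stopping at $\sigma_N$, taking expectations, using $\E[Z_s] \le 1$, and letting $N \to \infty$ via Fatou's lemma yields $\E[Z_t \lvert X_t\rvert^2] \le \E[\lvert X_0\rvert^2] + t$. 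Integrating over $[0, T]$ gives the first inequality.

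For $i \ge 2$, the driving Brownian motions $B_{\cdot, i}$ and $B_{\cdot, 1}$ are independent, so $\langle Z, X_{\cdot, i}\rangle \equiv 0$ and It\^o's formula reduces to
\[
{\mathrm d}(Z_t X_{t, i}^2) \, =\, Z_t \, {\mathrm d} t + 2 Z_t X_{t, i} \, b(t, X_{t, i}, F_{t, i}) \, {\mathrm d} t + {\mathrm d} N_t,
\]
where $N$ is a local martingale. The drift no longer cancels, but the linear growth (\ref{eq: 1.2a}), the form (\ref{eq: Lin}) of $b$, the uniform bound $\sup_{0 \le s \le T} \E[\lvert X_{s, i}\rvert] < \infty$ (Proposition~\ref{prop: 1.2} together with $\mathrm{Law}(X_{\cdot, i}) = \mathrm{Law}(X_{\cdot, 1})$), and $2\lvert ab\rvert \le a^2 + b^2$ give
\[
2 \, \bigl\lvert X_{s, i}\, b(s, X_{s, i}, F_{s, i})\bigr\rvert \, \le \, C_T'\, \bigl(1 + X_{s, i}^2 + X_{s, i+1}^2\bigr).
\]
Localizing and taking expectations, and writing $\phi_j(t) := \E[Z_t X_{t, j}^2]$, one obtains
\[
\phi_i(t) \, \le \, \E[\lvert X_{0, i}\rvert^2] + (1 + C_T') T + C_T' \int_0^t \phi_i(s) \, {\mathrm d} s + C_T' \int_0^t \phi_{i+1}(s) \, {\mathrm d} s,
\]
so \textsc{Gronwall}'s lemma yields $\sup_{0 \le t \le T} \phi_i(t) < \infty$ whenever $\int_0^T \phi_{i+1}(s) \, {\mathrm d} s < \infty$, and hence $\int_0^T \phi_i(s) \, {\mathrm d} s < \infty$. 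The final claim follows by iterating this step from $i = n$ down to $i = 2$, noting that $\widetilde X_\cdot = \overline X_{\cdot, 2}$ by (\ref{eq: 4.1pair}).

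The main obstacle is the careful justification of the localization--Fatou step: the stopping time $\sigma_N$ must simultaneously control $\lvert X_{\cdot, i}\rvert$, $\lvert X_{\cdot, i+1}\rvert$, $Z_\cdot$, and the drift so that each stopped stochastic integral is a true martingale with vanishing expectation, while still satisfying $\sigma_N \uparrow \infty$ almost surely; the latter is guaranteed by the $L^2$-estimates of Proposition~\ref{prop: 1.2}. Once these technicalities are settled, the computations above are elementary.
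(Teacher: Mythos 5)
Your argument is correct and rests on the same core computations as the paper's: the It\^o formula for $Z_t X_{t,i}^2$, the exact cancellation of the drift via the cross-variation $\langle Z, X\rangle$ when $i=1$, the linear-growth bound on $2Z_t X_{t,i}\,b(t,X_{t,i},F_{t,i})$ plus \textsc{Gronwall} and iteration when $i\ge 2$. The only difference is a purely technical one in how the local martingale terms are handled — you localize with stopping times and pass to the limit via Fatou, whereas the paper truncates with the map $v\mapsto v/(1+\varepsilon v)$ and sends $\varepsilon\downarrow 0$; both are standard and equally valid (and, as a side note, your displayed formula for ${\mathrm d}(Z_t X_t^2)$ correctly retains the factor $b(t,X_t,F_t)$ in the ${\mathrm d}B_t$ term, which the paper's display accidentally drops, though this does not affect the paper's subsequent estimate).
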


\begin{proof} Under the assumptions in Proposition \ref{prop: 1} with $\, \mathbb E [ \lvert X_{0}\rvert^{2}] < \infty\,$, we consider for $\,\varepsilon > 0\,$ 
\begin{equation} \label{eq: Girsanov.1}
{\mathrm d} \Big( \frac{ Z_{t} \lvert X_{t}\rvert^{2} }{\,  1+ \varepsilon Z_{t}\lvert X_{t}\rvert^{2}\, } \Big) \, =\,  \frac{ {\mathrm d} (Z_{t}\lvert X_{t}\rvert^{2})}{\, (1+ \varepsilon Z_{t}\lvert X_{t}\rvert^{2})^{2}\, } - \frac{\varepsilon \, {\mathrm d} \langle Z_{\cdot}\lvert X_{\cdot}\rvert^{2}\rangle_{t} 	}{\, (1+ \varepsilon Z_{t}\lvert X_{t}\rvert^{2})^{3}\, } \, , 
\end{equation}
where $\, 
{\mathrm d} ( Z_{t} \lvert X_{t}\rvert^{2} ) \, =\, Z_{t} {\mathrm d} t + (2 Z_{t} X_{t}  - Z_{t}\lvert X_{t}\rvert^{2}) {\mathrm d} B_{t} \, $, $\, t \ge 0 \,$. Taking the expectations, we claim 
\[
\mathbb E \Big[\frac{ Z_{t} \lvert X_{t}\rvert^{2} }{ \, 1+ \varepsilon Z_{t}\lvert X_{t}\rvert^{2}\, }  \Big] \,  \le  \,  \mathbb E \Big[ \frac{ \lvert X_{0}\rvert^{2}  }{1+ \varepsilon \lvert X_{0}\rvert^{2}}\Big] + \mathbb E \Big[ \int^{t}_{0}  \frac{ Z_{s}}{\, (1+ \varepsilon Z_{s}\lvert X_{s}\rvert^{2})^{2}\, } {\mathrm d} s \Big] \, \le \, \mathbb E [ \lvert X_{0}\rvert^{2} ] + \int^{t}_{0} \mathbb E [ Z_{s}] {\mathrm d} s \, ; \quad t \ge 0 \, . 
\] 
Here the stochastic integrals with respect to the Brownian motion in (\ref{eq: Girsanov.1}) are indeed martingales, as in Exercise 3.11 of Bain \& Crisan (2009). Note also that $\,b\,$ disappears in the evaluation. Since $\, \mathbb E [ Z_{\cdot}] \le 1\,$, by letting $\,\varepsilon \downarrow 0\,$, we obtain the first inequality in (\ref{eq: EZX}) from $\, \mathbb E [ Z_{t} \lvert X_{t}\rvert^{2} ] \le \mathbb E [ \lvert X_{0}\rvert^{2}] + t\,$. 

For the second assertions, we replace $\, Z_{t} \lvert X_{t}\rvert^{2}/ ( 1 + \varepsilon \lvert X_{t}\rvert^{2} )\,$ in (\ref{eq: Girsanov.1})  by $\, Z_{t} \lvert X_{t, i}\rvert^{2}/ ( 1 + \varepsilon \lvert X_{t, i}\rvert^{2} )\,$, $\, t \ge 0\,$ for $\,i \, =\,  2, 3, \ldots , n\,$. Thanks to (\ref{eq: 1.2a}) and $\, \mathbb E [ Z_{\cdot}] \le 1\,$, we have 
\[
\frac{{\mathrm d}}{ {\mathrm d} t } \mathbb E \Big[\frac{ Z_{t} \lvert X_{t, i}\rvert^{2} }{ \, 1+ \varepsilon Z_{t}\lvert X_{t, i}\rvert^{2}\, }  \Big] \,  \le 1 + 4 C_{T} \Big(  1 + \mathbb E [ \sup_{0 \le s \le t} \lvert X_{s, i}\rvert^{2} ]  + 
 \mathbb E \Big[\frac{ Z_{t} \lvert X_{t, i}\rvert^{2} }{ \, 1+ \varepsilon Z_{t}\lvert X_{t, i}\rvert^{2}\, }  \Big] + \mathbb E \Big[\frac{ Z_{t} \lvert X_{t, i+1}\rvert^{2} }{ \, 1+ \varepsilon Z_{t}\lvert X_{t, i+1}\rvert^{2}\, }  \Big] \Big) \, . 
\]
As in remark \ref{rem: 2.4}, we may derive the estimate $\, \mathbb E [ \sup_{0 \le s \le T} \lvert X_{s}\rvert^{2}] < \infty\,$. Then applying the Gronwall inequality, we obtain the estimate that there exists a constant $\, c > 0 \,$  such that 
\[
\mathbb E \Big[\frac{ Z_{t} \lvert X_{t, i}\rvert^{2} }{ \, 1+ \varepsilon Z_{t}\lvert X_{t, i}\rvert^{2}\, }  \Big] \le c + 4 C_{T} \mathbb E \Big[\frac{ Z_{t} \lvert X_{t, i+1}\rvert^{2} }{ \, 1+ \varepsilon Z_{t}\lvert X_{t, i+1}\rvert^{2}\, }  \Big] + \int^{t}_{0} \Big( c +  4 C_{T} \mathbb E \Big[\frac{ Z_{s} \lvert X_{s, i+1}\rvert^{2} }{ \, 1+ \varepsilon Z_{s}\lvert X_{s, i+1}\rvert^{2}\, }  \Big] \Big) e^{4 C_{T} T} {\mathrm d} s \, . 
\]
Integrating over $\, [0, T]\,$ with respect to $\,t \,$ and letting $\,\varepsilon \downarrow 0\,$, we claim the conclusions. 
\end{proof}

Let us assume that (\ref{eq: Novikov}) or 
$(\ref{eq: EZX})$ holds. 
Then the stochastic exponential $\,(Z_{t}, \mathcal F_{t})\,$, $\,t \ge 0\,$ in $(\ref{eq: SEZ})$ is martingale. By Girsanov theorem, under a new probability measure $\, {\mathbb P}_{0}\,$ with expectation $\, {\mathbb E}_{0}\,$, defined 
by 
\begin{equation} \label{eq: GirsanovRN}
\, 
({ {\mathrm d} {\mathbb P}_{0}}/{ {\mathrm d} \mathbb P }) \vert_{\mathcal F_{T}} \, :=\,  Z_{T} \, 
\end{equation} 
for every $\,T > 0\,$,  
we have the Kallianpur-Striebel formula:  $\,{\mathbb P}_{0} \,$ $(\, \mathbb P \,)$-a.s.
\begin{equation} \label{eq: KallStr}
\pi_{t} (\varphi ) \, :=\, \mathbb E [ \varphi ( \widetilde{X}_{t} ) \vert \mathcal F^{X}_{t}] \, =\, \frac{\rho_{t}(\varphi)}{\rho_{t}( {\bf 1}) }  \, , \quad \text{ where } \quad 
 \rho_{t}(\varphi) \, :=\, {\mathbb E}_{0} [ Z_{t}^{-1} \varphi ( \widetilde{X}_{t}) \vert \mathcal F^{X}_{T} ] \, ; \quad 0 \le t \le T \,  
\end{equation} 
for measurable function $\, \varphi:\mathbb R \to \mathbb R\,$ with $\, \mathbb E  [ \lvert \varphi ( \widetilde{X}_{t}) \rvert ] < \infty \,$. Its proof is a direct consequence of Proposition 3.16 and Exercise 5.1 of Bain \& Crisan (2009). 

Given the observation $\, \mathcal F_{T}^{X}\,$, the conditional log-likelihood function $\, \mathbb E [ \log ({ {\mathrm d} \mathbb P }/{ {\mathrm d}  {\mathbb P}_{0}  }) \vert_{\mathcal F_{T}} \vert \mathcal F_{T}^{X} ] \, $ 
is 
\[
\mathbb E \big[ {} - \log Z_{T} \vert \mathcal F_{T}^{X} \big] \, =\,  \mathbb E \Big[ \int^{T}_{0} b(s, X_{s}, F_{s}) {\mathrm d} X_{s} - \frac{1}{2} \int^{T}_{0} \lvert b(s, X_{s}, F_{s}) \rvert^{2} {\mathrm d} s \Big \vert \mathcal F_{T}^{X}\Big]  \, . 
\] 
Substituting the expression $\, b(s, X_{s}, F_{s}) \, =\, u \widetilde{b}(s, X_{s}, \widetilde{X}_{s}) + (1-u) \int_{\mathbb R} \widetilde{b}(s, X_{s}, y) {\rm m}_{s}({\mathrm d} y ) \, $, $\, s \ge 0 \, $, we see it is a quadratic function of $\,u\,$. Thus the conditional log-likelihood is maximized at  the conditional maximum likelihood estimator $\, \widehat{u}_{T}\,$ defined by 
\begin{equation} \label{eq: MLE0} 
\begin{split}
& \widehat{u}_{T} \, :=\, \Big[ \mathbb E \Big[  \int^{T}_{0} \lvert \overline{b}(s, X_{s}, \widetilde{X}_{s}) \rvert^{2}  {\mathrm d} s \Big \vert \mathcal F_{T}^{X} \Big]  \Big]^{-1} \\
& ~~~~~~~~~~~~~~~~~~~~~ \times 
 \mathbb E \Big[  \int^{T}_{0} \overline{b} (s, X_{s}, \widetilde{X}_{s})  {\mathrm d}_{s} \Big( X_{s} - \int^{s}_{0} \int_{\mathbb R}\widetilde{b}(u, X_{u}, y) {\rm m}_{u} ({\mathrm d} y) {\mathrm d} u \Big) \Big \vert   \mathcal F_{T}^{X} \Big]  \, ,  
\end{split}
\end{equation}
where $\,\overline{b}(s, x, z) \, :=\, \widetilde{b}(s, x, z) - \int_{\mathbb R} \widetilde{b}(s, x, y) {\rm m} ( {\mathrm d} y ) \,$ is defined as in the proof of Proposition \ref{prop: 2.0} for $\,x, z \in \mathbb R\,$, $\, 0 \le s \le T\,$. The maximum likelihood estimator $\, \widehat{u}_{T}\,$ in (\ref{eq: MLE0}) is well defined if the denominator is not zero, e.g., $\, \overline{b}(\cdot, \cdot, \cdot) \neq 0\,$. 

The analysis of (\ref{eq: MLE0}) is not straightforward due to the conditional expectation and the filtering feature. 
We shall discuss the filtering equations in the following section \ref{sec: Filter} and then see  the consistent estimators under the special linear case in section \ref{sec: iOU}. 
For the theory of parameter estimation in Stochastic Filtering, see e.g., chapter 17 of Liptser \& Shiryayev (2001). 

\subsection{Filtering equations} \label{sec: Filter} 
In the following let us assume under $\,\mathbb P_{0}\,$ defined in (\ref{eq: GirsanovRN})
\begin{equation} \label{eq: cZakai} 
{\mathbb P }_{0} \Big( \int^{t}_{0} \big \lvert {\mathbb E}_{0} \big[  \lvert b ( s, X_{s}, F_{s})\rvert \, \big \vert \, \mathcal F_{T}^{X} \big] \big \rvert ^{2} {\mathrm d} s  < \infty \Big) \, =\,  1 \, ; \quad 0 \le t \le T \, . 
\end{equation}

\begin{prop} \label{prop: Zakai} Let us recall $(\ref{eq: 4.1pair})$ and assume $(\ref{eq: EZX})$ and $(\ref{eq: cZakai})$. For every $\,\varphi \in C^{2}_{0}(\mathbb R) \,$, the conditional expectations $\, \rho_{\cdot}(\varphi) \, =\, \mathbb E_{0} [ Z_{\cdot}^{-1} \varphi ( \widetilde{X}_{\cdot}) \vert \mathcal F_{T}^{X} ] \,$ in $(\ref{eq: KallStr})$ satisfy 
\begin{equation} \label{eq: Zakai} 
\rho_{t} (\varphi) \, =\,  \pi_{0} (\varphi)  + \int^{t}_{0} \rho_{s,2} ( \varphi b) {\mathrm d} X_{s} + \int^{t}_{0} \rho_{s,3}( \widetilde{\mathcal A}_{s} \varphi) {\mathrm d} s\, , \quad 0 \le t \le T \, , 
\end{equation}
where $\,\pi_{0}(\varphi) \, =\,  \mathbb E [ \varphi (\widetilde{X}_{0}) \vert \mathcal F_{0}^{X}] \, =\,  \mathbb E [ \varphi ( \widetilde{X}_{0}) ] \, =\,  \mathbb E [ \varphi (X_{0}) ] \,$ in $(\ref{eq: KallStr})$, $\, \rho_{s,2}(\varphi b) \,$ and $\, \rho_{s,3}( \widetilde{\mathcal A}_{s} \varphi ) \,$ are defined by 
\begin{equation} \label{eq: Zakai1}
\rho_{s, 2}(\varphi b) \, :=\,  {\mathbb E}_{0} \big [ Z_{s}^{-1}\varphi ( \widetilde{X}_{s}) b(s, X_{s}, F_{s}) \vert \mathcal F_{T}^{X}\big]  \, , 
\end{equation}
\begin{equation} \label{eq: Zakai2} 
\rho_{s,3}(\widetilde{\mathcal A}_{s} \varphi) \, :=\,  {\mathbb E}_{0} \Big[ Z_{s}^{-1} \Big( \varphi^{\prime} ( \widetilde{X}_{s}) b ( s, \widetilde{X}_{s} , {F}_{s, 2}) + \frac{1}{\, 2\, } \varphi^{\prime\prime} ( \widetilde{X}_{s}) \Big) \Big \vert  \mathcal F_{T}^{X} \Big] \, , \quad 0 \le s \le T \, . 
\end{equation}
Here $\,F_{\cdot} \, =\,  F_{\cdot, 1}\,$ and $\, {F}_{\cdot, 2}\,$ are the random measures defined as in $(\ref{eq: Fu4})$ from the law of $\,X_{\cdot} \, =\, \overline{X}_{\cdot, 1}\,$, $\, \widetilde{X}_{\cdot} \, =\, \overline{X}_{\cdot, 2}\,$ and $\, \overline{X}_{\cdot, 3} \,$,  in the solution $\, ( \overline{X}_{\cdot, 1}, \overline{X}_{\cdot, 2},  \overline{ X}_{\cdot, 3}) \,$ to the system $(\ref{eq: 4.1})$-$(\ref{eq: Fu4})$  of the directed chain stochastic differential equation with the distributional constraints $(\ref{eq: NDlaw4})$-$(\ref{eq: NDlaw5})$. 
\end{prop}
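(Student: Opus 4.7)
The plan is to follow the standard reference-measure derivation of the Zakai equation (as in Bain \& Crisan (2009), chapter 3), suitably adapted to the directed chain setting. First I would verify that under the hypothesis (\ref{eq: EZX}), the process $\{Z_t\}$ is a genuine $(\mathcal F_t,\mathbb P)$-martingale, so that $\mathbb P_0$ in (\ref{eq: GirsanovRN}) is a well-defined probability measure and Girsanov's theorem applies: the process $X_\cdot-X_0$ becomes a standard Brownian motion under $\mathbb P_0$, and because $\widetilde{X}_\cdot$ is independent of $B_\cdot$ under $\mathbb P$ (cf.\ (\ref{eq: NDlaw})--(\ref{eq: NDlaw5})), a variant of the ``Bayes/Kallianpur--Striebel'' argument shows that under $\mathbb P_0$ the full chain $(\overline X_{\cdot,2},\overline X_{\cdot,3},\ldots)$ retains its $\mathbb P$-law and is independent of $X_\cdot-X_0$. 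In particular, $\widetilde{X}_\cdot = \overline X_{\cdot,2}$ still satisfies $\mathrm d \widetilde X_t = b(t,\widetilde X_t, F_{t,2})\,\mathrm d t + \mathrm d \widetilde B_t$ under $\mathbb P_0$ for some $\mathbb P_0$-Brownian motion $\widetilde B_\cdot$ that is independent of $X_\cdot$.

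Next I would compute the dynamics needed to expand $Z_t^{-1}\varphi(\widetilde X_t)$ by It\^o's product rule under $\mathbb P_0$. Writing the measure change in terms of $X$ rather than $B$ gives $Z_t^{-1}=\exp\bigl(\int_0^t b(s,X_s,F_s)\,\mathrm d X_s-\tfrac12\int_0^t|b(s,X_s,F_s)|^2\,\mathrm d s\bigr)$, so a direct application of It\^o yields $\mathrm d Z_t^{-1}=Z_t^{-1}\,b(t,X_t,F_t)\,\mathrm d X_t$ under $\mathbb P_0$. Applying It\^o's formula to $\varphi\in C_0^2(\mathbb R)$ along $\widetilde X_\cdot$ produces
\begin{equation*}
\mathrm d \varphi(\widetilde X_t) = \Bigl(\varphi'(\widetilde X_t)\,b(t,\widetilde X_t,F_{t,2})+\tfrac12\varphi''(\widetilde X_t)\Bigr)\mathrm d t+\varphi'(\widetilde X_t)\,\mathrm d \widetilde B_t.
\end{equation*}
Because $\widetilde B$ and $X$ are independent Brownian motions under $\mathbb P_0$, $\mathrm d\langle Z^{-1},\varphi(\widetilde X)\rangle_t=0$, and the product rule gives
\begin{equation*}
\mathrm d\bigl(Z_t^{-1}\varphi(\widetilde X_t)\bigr)=Z_t^{-1}\varphi(\widetilde X_t)\,b(t,X_t,F_t)\,\mathrm d X_t+Z_t^{-1}\varphi'(\widetilde X_t)\,\mathrm d \widetilde B_t+Z_t^{-1}\bigl(\varphi'(\widetilde X_t)\,b(t,\widetilde X_t,F_{t,2})+\tfrac12\varphi''(\widetilde X_t)\bigr)\mathrm d t.
\end{equation*}

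The final step is to integrate from $0$ to $t$ and take $\mathbb E_0[\,\cdot\,|\mathcal F_T^X]$. The initial term reduces to $\pi_0(\varphi)$ since $Z_0=1$ and $\widetilde X_0\stackrel{\mathrm d}{=}X_0$. The $\mathrm d s$-integral is handled by an ordinary (conditional) Fubini theorem and produces $\int_0^t\rho_{s,3}(\widetilde{\mathcal A}_s\varphi)\,\mathrm d s$, with $\rho_{s,3}$ precisely as in (\ref{eq: Zakai2}). The $\mathrm d\widetilde B_s$-integral vanishes upon conditioning on $\mathcal F_T^X$: under $\mathbb P_0$, conditionally on $X_\cdot$, the process $\widetilde B_\cdot$ remains a Brownian motion independent of $X$, so the integral is a conditional $\mathbb P_0$-martingale with zero conditional mean (integrability of the integrand follows from (\ref{eq: cZakai}) together with $\mathbb E_0[Z_t^{-1}]=1$ and moment bounds from Proposition \ref{prop: 1.2}). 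Finally, the $\mathrm d X_s$-integral requires the most care: it demands an interchange of the conditional expectation $\mathbb E_0[\,\cdot\,|\mathcal F_T^X]$ with the stochastic integral in $X$, which is exactly where hypothesis (\ref{eq: cZakai}) enters, via the conditional stochastic-Fubini lemma of Bain \& Crisan (2009) (Lemma 3.21). This yields the desired $\int_0^t \rho_{s,2}(\varphi b)\,\mathrm d X_s$ term and completes the proof of (\ref{eq: Zakai}).

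The two steps I would expect to be the main obstacles are (i) the precise verification that Girsanov's theorem still decouples $X_\cdot-X_0$ from the infinite chain $(\widetilde X_\cdot,\overline X_{\cdot,3},\ldots)$ under $\mathbb P_0$ — this is the point at which the ``Russian nesting doll'' structure of Remark \ref{rem: 1.2} could cause trouble and must be treated by repeatedly invoking the Girsanov/independence argument along the chain — and (ii) the interchange of conditional expectation and $\mathrm d X_s$-integration, which is what forces the technical integrability assumption (\ref{eq: cZakai}).
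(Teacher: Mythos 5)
You have correctly identified the overall structure: pass to $\mathbb P_0$ via Girsanov, verify that $\widetilde X_\cdot$ retains its law and is independent of $X_\cdot-X_0$ under $\mathbb P_0$, apply It\^o to $Z_t^{-1}\varphi(\widetilde X_t)$ to produce the $\mathrm d s$, $\mathrm d\widetilde B_s$, and $\mathrm d X_s$ terms, and then condition on $\mathcal F_T^X$. The formal computation you write down is exactly the identity the paper establishes, and you even flagged the $\mathrm d X_s$-interchange as the pressure point. But there is a genuine gap at that point: you apply It\^o's product rule directly to $Z_t^{-1}\varphi(\widetilde X_t)$, and to proceed you need $\int_0^t Z_s^{-1}\varphi'(\widetilde X_s)\,\mathrm d\widetilde B_s$ and $\int_0^t Z_s^{-1}\varphi(\widetilde X_s)\,b(s,X_s,F_s)\,\mathrm d X_s$ to be genuine (conditional) $\mathbb P_0$-martingales, so that the first vanishes after conditioning and the second admits the stochastic-Fubini exchange. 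That requires $L^2(\mathbb P_0)$ control on integrands containing the factor $Z_s^{-1}$, i.e.\ on $\mathbb E_0\big[\int_0^T Z_s^{-2}|\cdots|^2\,\mathrm d s\big]=\mathbb E\big[\int_0^T Z_s^{-1}|\cdots|^2\,\mathrm d s\big]$. The hypotheses $(\ref{eq: EZX})$, $(\ref{eq: cZakai})$ and Proposition~\ref{prop: 1.2} only control quantities of the form $\mathbb E\big[\int Z_s|\cdots|^2\,\mathrm d s\big]$ (equivalently $\mathbb E_0\big[\int|\cdots|^2\,\mathrm d s\big]$, cf.\ $(\ref{eq: EZb2})$), not $\mathbb E\big[\int Z_s^{-1}|\cdots|^2\,\mathrm d s\big]$; and $\mathbb E_0[Z_t^{-1}]=1$, which you invoke, gives only $L^1(\mathbb P_0)$ information — not enough for the Fubini lemma or for concluding the $\mathrm d\widetilde B$-term has zero conditional mean.

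The paper resolves this with an $\varepsilon$-truncation that your proposal omits: one first replaces $Z_t^{-1}$ by $\widetilde Z^\varepsilon_t:=Z_t^{-1}/(1+\varepsilon Z_t^{-1})$, which is bounded by $1/\varepsilon$, applies It\^o's product rule to $\widetilde Z^\varepsilon_t\,\varphi(\widetilde X_t)$ (this introduces an additional $\mathrm d t$-correction of order $\varepsilon$), conditions on $\mathcal F_T^X$ — where the stochastic integrals are now genuine martingales because the truncated integrands are controlled via $(\ref{eq: EZX})$ and $(\ref{eq: EZb2})$ — and finally lets $\varepsilon\downarrow 0$, using $(\ref{eq: cZakai})$ and dominated convergence to pass to the limit in the $\mathrm d X_s$-integral. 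Without this device, your expansion yields only a local-martingale identity and the conditioning step you need is not justified by the stated hypotheses.
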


\begin{proof} The proof idea is a slight modification of Theorem 3.24 of Bain \& Crisan (2009). For $\,\varphi \in C^{2}_{0}(\mathbb R) \,$ let us take the semimartingale decomposition $\,\varphi (\widetilde{X}_{\cdot}) \, =\,  \varphi ( \widetilde{X}_{0}) + M_{\cdot}^{\varphi} + A_{\cdot}^{\varphi} \,$ of $\, \varphi (\widetilde{X}_{\cdot})\,$, where $\, M_{\cdot}^{\varphi}\,$ and $\, A_{\cdot}^{\varphi}\,$ are the martingale and the finite variation terms, respectively, 
\[
{\mathrm d} M^{\varphi}_{t} \, :=\, \varphi^{\prime} ( \widetilde{X}_{t}) {\mathrm d} B_{t, 2} \, , \quad 
{\mathrm d} A_{t}^{\varphi} \, :=\, \varphi^{\prime} ( \widetilde{X}_{t}) b(t, \widetilde{X}_{t}, F_{t, 2}) {\mathrm d} t + \frac{1}{\, 2\, } \varphi^{\prime\prime} ( \widetilde{X}_{t}) {\mathrm d}t \, , 
\]
and then consider $\, \widetilde{Z}^{\varepsilon}_{t} \cdot \varphi ( \widetilde{X}_{t})  \,$, $\, t \ge 0\,$ for $\, \varepsilon > 0\,$, and its conditional expectation with respect to $\, \mathcal F_{T}^{X}\,$, where $\, \widetilde{Z}^{\varepsilon}_{t} \, :=\, Z_{t}^{-1} / ( 1 + \varepsilon Z_{t}^{-1}) \,$, $\, t \ge 0\,$. 
Since $\, {\mathrm d} Z_{t}^{-1} \, =\,  Z_{t}^{-1} b(t, X_{t}, F_{t}) {\mathrm d} X_{t}\,$, $\, t \ge 0\,$, we have 
\[
{\mathrm d} \widetilde{Z}_{t}^{\varepsilon} \, =\,  \frac{Z_{t}^{-1} b(t, X_{t}, F_{t}) }{\, (1 + \varepsilon Z_{t}^{-1})^{2}\, } {\mathrm d} X_{t} - \frac{\varepsilon Z_{t}^{-2} \lvert b(t, X_{t}, F_{t}) \rvert^{2}\, }{ ( 1 + \varepsilon Z_{t}^{-1})^{3}\, } {\mathrm d} t \, ; \quad t \ge 0 \, . 
\] 

Substituting these expressions for $\, M^{\varphi}_{\cdot}\,$, $\, A^{\varphi}_{\cdot}\,$ and $\, \widetilde{Z}^{\varepsilon}_{\cdot}\,$ into 
\begin{equation} \label{eq: cZakai3} 
\mathbb E_{0} [ \widetilde{Z}^{\varepsilon}_{t}\varphi ( \widetilde{X}_{t}) \vert \mathcal F_{T}^{X} ]  \, =\,  \mathbb E_{0} [ \widetilde{Z}^{\varepsilon}_{0}\varphi ( \widetilde{X}_{0}) \vert \mathcal F_{T}^{X} ] + \mathbb E_{0} \Big[ \int^{t}_{0} \widetilde{Z}_{s}^{\varepsilon} {\mathrm d} A_{s}^{\varphi} 
+ \int^{t}_{0} \widetilde{Z}_{s}^{\varepsilon} {\mathrm d} M_{s}^{\varphi}  + \int^{t}_{0} \varphi ( \widetilde{X}_{s}) {\mathrm d} \widetilde{Z}^{\varepsilon}_{s} \Big \vert \mathcal F_{T}^{X} \Big] \, ,  
\end{equation}
and taking the limits as $\,\varepsilon \downarrow 0\,$ under  $(\ref{eq: EZX})$ and $(\ref{eq: cZakai})$, we obtain (\ref{eq: Zakai}) with (\ref{eq: Zakai1})-(\ref{eq: Zakai2}). Indeed, we need (\ref{eq: EZX}) to show (\ref{eq: EZb2}) and then  with $\, \lVert \varphi \rVert_{\infty} \, :=\, \sup_{x\in \mathbb R} \lvert \varphi (x) \rvert \,$ for $\, \varphi \in C^{2}_{0} ( \mathbb R) \,$, 
\[
\mathbb E_{0} \Big[ \int^{t}_{0} \Big \lvert \widetilde{Z}_{s}^{\varepsilon} \cdot \frac{\varphi ( \widetilde{X}_{s}) b(s, {X}_{s}, F_{s}) }{ ( 1 + \varepsilon Z_{s}^{-1}) } \Big \rvert^{2} {\mathrm d}s \Big] \le \lVert \varphi \rVert_{\infty}^{2} \mathbb E \Big[ \int^{t}_{0} Z_{s} \lvert b(s, X_{s}, F_{s})  \rvert^{2} {\mathrm d} s \Big] < \infty \, , 
\]
\[
\mathbb E_{0} \Big[ \int^{t}_{0} \mathbb E_{0} \Big[ \Big \lvert \widetilde{Z}_{s}^{\varepsilon} \cdot \frac{\varphi ( \widetilde{X}_{s}) b(s, {X}_{s}, F_{s}) }{ ( 1 + \varepsilon Z_{s}^{-1}) } \Big \rvert^{2}   \Big \vert \mathcal F_{T}^{X}\Big] {\mathrm d} s \Big] \le  \lVert \varphi \rVert_{\infty}^{2} \mathbb E \Big[ \int^{t}_{0} Z_{s} \lvert b(s, X_{s}, F_{s})  \rvert^{2} {\mathrm d} s \Big] < \infty \, , 
\]
and hence 
\begin{equation} \label{eq: cZakai2}
\mathbb E_{0} \Big[ \int^{t}_{0} \widetilde{Z}_{s}^{\varepsilon} \cdot \frac{\varphi ( \widetilde{X}_{s}) b(s, {X}_{s}, F_{s}) }{ ( 1 + \varepsilon Z_{s}^{-1}) } {\mathrm d} X_{s} \Big \vert \mathcal F_{T}^{X}\Big] \, =\, \int^{t}_{0}  \mathbb E_{0} \Big[ \widetilde{Z}_{s}^{\varepsilon} \cdot \frac{\varphi ( \widetilde{X}_{s}) b(s, {X}_{s}, F_{s}) }{ ( 1 + \varepsilon Z_{s}^{-1}) }  \Big \vert \mathcal F_{T}^{X}\Big] {\mathrm d} X_{s} \, . 
\end{equation}
is a martingale under $\, \mathbb P_{0}\,$ for $\, 0 \le t \le T\,$. We need (\ref{eq: cZakai}) to verify that $\, \mathbb P_{0} \,$ a.s. 
\[
\int^{t}_{0} \Big \lvert \mathbb E_{0} \Big[ \widetilde{Z}_{s}^{\varepsilon} \cdot \frac{\varphi ( \widetilde{X}_{s}) b(s, {X}_{s}, F_{s}) }{ ( 1 + \varepsilon Z_{s}^{-1}) } \Big \vert \mathcal F_{T}^{X} \Big]  - \rho_{s, 2} ( \varphi b) \Big \rvert^{2} {\mathrm d} s \le 4 \lVert \varphi \rVert^{2}_{\infty} \int^{t}_{0} \lvert \mathbb E_{0} [ \lvert b(s, X_{s}, F_{s})\rvert \big \vert \mathcal F_{T}^{X}]\rvert^{2} {\mathrm d} s < \infty  \,  
\]
for $\, 0 \le t \le T\,$ and then by the dominated convergence theorem to show that as $\,\varepsilon \downarrow 0\,$, for a suitably chosen subsequence $\,\varepsilon_{n} \downarrow 0\,$, (\ref{eq: cZakai2}) converges $\,\mathbb P_{0}\,$-a.s. to the $\, \mathbb P_{0}\,$-local martingale $\, \int^{\cdot}_{0} \rho_{s,2}(\varphi b) {\mathrm d} X_{s}\,$. The convergence of the other terms in (\ref{eq: cZakai3}) along $\,\varepsilon_{n}\,$ is relatively straightforward. 

\end{proof}

More generally, for every $\,n \ge 2\,$ and every $\,k \, =\, 1, 2, \ldots , n\,$, given $\,\varphi (t, x) \in C^{1,2}([0, T] \times \mathbb R^{k}) \,$ with bounded support in $\, \mathbb R^{k}\,$ and bounded in time $\, 0 \le t \le T\,$ let us recall (\ref{eq: 4.1pair}), i.e., $\, \overline{X}_{\cdot, 1} \equiv X_{\cdot}\,$, and define
\begin{equation} \label{eq: rhotk}
\rho_{t, k} (\varphi) \, :=\,  \mathbb E_{0} [ Z_{t}^{-1} \varphi (t,  \overline{X}_{t, 1}, \ldots , \overline{X}_{t,k}) \vert \mathcal F_{T}^{X}] \, ; \quad k \, =\,   2, \ldots , n \, , \, \, 0 \le t \le T \, ,  
\end{equation}
and similarly, let us define the normalized version 
\begin{equation} \label{eq: pitk}
\pi_{t,k} (\varphi) \, :=\,  \mathbb E [ \varphi (t, \overline{X}_{t, 1}, \ldots , \overline{X}_{t,k}) \, \vert \, \mathcal F_{t}^{X} ] \, ; \quad k \, =\, 2, \ldots , n , \, \, 0 \le t \le T\, 
\end{equation}
for $\, \varphi \in C^{2}_{0} ([0, T] \times \mathbb R^{k}) \,$, and for $\, (s, x) \in [0, T] \times \mathbb R^{k} \,\,$, $\, i \, =\,  1, 2, \ldots , n\,$
\[
D_{s} \varphi (s, x) \, :=\, \frac{\,\partial \varphi \,}{\, \partial s\,} (s, x) \, , \quad D_{i} \varphi (s, x)\, :=\,  \frac{\partial \varphi }{\partial x_{i}} (s, x) \, , \quad D_{i}^{2} \varphi (s, x) \, :=\, \frac{\,\partial^{2} \varphi \,}{\,\partial x_{i}^{2}\,}(s, x) \, . 
\]
Then with a similar reasoning as in the proof of Proposition \ref{prop: Zakai}, we obtain the following system (\ref{eq: Zakai3}) of Zakai equations for the (unnormalized) conditional expectations $\, \rho_{\cdot,k}\,$ of function of $\, ( \overline{X}_{\cdot, 1}, \ldots , \overline{X}_{\cdot, k}) \,$, $\, k \, =\,  2, \ldots , n\,$ with respect to $\, \mathcal F_{T}^{X} \,$ and for arbitrary $\,n \ge 2\,$. 

\begin{prop} \label{prop: Zakai2} Under the same assumption as in Proposition \ref{prop: Zakai}, $\, \rho_{\cdot,k} (\varphi) \,$ in $(\ref{eq: rhotk})$ satisfies 
\begin{equation} \label{eq: Zakai3} 
\rho_{t, k}(\varphi) \, =\,  \pi_{0,k}(\varphi) +  \int^{t}_{0} \rho_{s, k} (\varphi b) {\mathrm d} X_{s} + \int^{t}_{0} \rho_{s, k+1}( \widetilde{A}_{s} \varphi) {\mathrm d} s \, , 
\end{equation} 
where the integrands are defined by 
\begin{equation*}
\begin{split}
\rho_{s,k+1}( \widetilde{\mathcal A}_{s} \varphi) \, :=\,  {\mathbb E}_{0} \Big[ Z_{s}^{-1} & \Big( \sum_{i=1}^{k} D_{i} \varphi ( s, \overline{X}_{s, 1}, \ldots , \overline{X}_{s, k} ) \cdot b ( s, \overline{X}_{s, i} , {F}_{s, i}) + \frac{1}{\, 2\, } \sum_{i=1}^{k} D_{i}^{2} \varphi ( s, \overline{X}_{s, 1}, \ldots , \overline{X}_{s, k} ) \\
& {} +  D_{s} \varphi ( s, \overline{X}_{s, 1}, \ldots , \overline{X}_{s, k} ) + D_{1}   \varphi ( s, \overline{X}_{s, 1} , \ldots , \overline{X}_{s, k})   \cdot  b ( s, \overline{X}_{s, 1} , {F}_{s, 1})    \Big) \Big \vert  \mathcal F_{T}^{X} \Big] \, , \end{split}
\end{equation*}
and 
\begin{equation} \label{eq: rhoskphib}
\rho_{s, k}(\varphi b) \, :=\,  {\mathbb E}_{0} \big [ Z_{s}^{-1}\varphi (s, \overline{X}_{s, 1}, \ldots , \overline{X}_{s, k}) b(s, X_{s, 1}, F_{s, 1}) \, \vert \, \mathcal F_{T}^{X}\, \big]  \, 
\end{equation} 
for $\, 0 \le s \le  t \le T\,$, $\, k \, =\,  2, \ldots , n\,$ and for arbitrary $\,n \ge 2\,$. 
\end{prop}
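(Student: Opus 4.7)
The plan is to mimic the proof of Proposition~\ref{prop: Zakai} \emph{mutatis mutandis} with the multivariate It\^o expansion applied to $Z_{t}^{-1}\varphi(t,\overline{X}_{t,1},\ldots,\overline{X}_{t,k})$ in place of $Z_{t}^{-1}\varphi(\widetilde{X}_{t})$, and then to take conditional expectations given $\mathcal{F}_{T}^{X}$. First, I would write the semimartingale decomposition of $\varphi(t,\overline{X}_{t,1},\ldots,\overline{X}_{t,k})$ under $\mathbb{P}$ by It\^o's formula, producing a finite-variation part containing $D_{s}\varphi$, $\sum_{i=1}^{k}D_{i}\varphi\cdot b(s,\overline{X}_{s,i},F_{s,i})$, and $\tfrac{1}{2}\sum_{i=1}^{k}D_{i}^{2}\varphi$, together with a martingale part $\sum_{i=1}^{k}D_{i}\varphi\,{\mathrm d}B_{s,i}$. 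Combining with ${\mathrm d}Z_{t}^{-1}=Z_{t}^{-1}b(t,X_{t},F_{t})\,{\mathrm d}X_{t}$ and the cross-variation $\langle Z^{-1}_{\cdot},\overline{X}_{\cdot,1}\rangle_{t}=\int_{0}^{t}Z_{s}^{-1}b(s,X_{s},F_{s})\,{\mathrm d}s$ (only the $i=1$ index contributes, since the $B_{\cdot,i}$ are independent), integration by parts yields an explicit decomposition of $Z_{t}^{-1}\varphi$ with a $\,{\mathrm d}X_{s}$-integrand and a $\,{\mathrm d}s$-integrand that, after collecting terms, match the expressions for $\rho_{s,k}(\varphi b)$ and $\rho_{s,k+1}(\widetilde{\mathcal{A}}_{s}\varphi)$.

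Second, to justify taking conditional expectations and exchanging them with the stochastic integrals I would introduce the regularization $\widetilde{Z}^{\varepsilon}_{t}:=Z_{t}^{-1}/(1+\varepsilon Z_{t}^{-1})$, $\varepsilon>0$, exactly as in (\ref{eq: 4.2})--(\ref{eq: cZakai3}), apply It\^o to the product $\widetilde{Z}^{\varepsilon}_{t}\varphi(t,\overline{X}_{t,1},\ldots,\overline{X}_{t,k})$, take $\mathbb{E}_{0}[\,\cdot\,|\,\mathcal{F}_{T}^{X}]$ on both sides, and use the bound $\widetilde{Z}^{\varepsilon}_{t}\le \min(Z_{t}^{-1},1/\varepsilon)$ together with the integrability hypotheses (\ref{eq: EZX}) and (\ref{eq: cZakai}) to pass to $\varepsilon\downarrow 0$ by dominated convergence along a subsequence, precisely as in Proposition~\ref{prop: Zakai}. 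For the martingale parts in ${\mathrm d}B_{s,i}$ with $i\ge 2$, conditioning on $\mathcal{F}_{T}^{X}$ eliminates them because, under $\mathbb{P}_{0}$, the Brownian motions $B_{\cdot,i}$ for $i\ge 2$ remain independent of $X_{\cdot}=\overline{X}_{\cdot,1}$ (Girsanov only changes the drift of $B_{\cdot,1}$), so the conditional expectation of each such stochastic integral is zero. The ${\mathrm d}B_{s,1}$-martingale terms and the ${\mathrm d}Z_{s}^{-1}$-term combine, via $\,{\mathrm d}X_{s}=b(s,X_{s},F_{s})\,{\mathrm d}s+{\mathrm d}B_{s,1}$, into the single $\,{\mathrm d}X_{s}$-integral $\int_{0}^{t}\rho_{s,k}(\varphi b)\,{\mathrm d}X_{s}$, and the residual drift contribution produces the extra $D_{1}\varphi\cdot b(s,\overline{X}_{s,1},F_{s,1})$ summand in $\rho_{s,k+1}(\widetilde{\mathcal{A}}_{s}\varphi)$.

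Third, to identify the $\mathbb{P}_{0}$-conditional expectation of a stochastic integral with the corresponding stochastic integral of the $\mathbb{P}_{0}$-conditional expectations, I would invoke the conditional Fubini-type argument for $X$-adapted integrators (as in Lemma~3.21 of Bain \& Crisan (2009)): condition (\ref{eq: cZakai}) guarantees that the integrand $\rho_{s,k}(\varphi b)$ is $\mathcal{F}_{T}^{X}$-adapted and square-integrable $\,{\mathrm d}s$-a.e., making the stochastic integral $\int_{0}^{t}\rho_{s,k}(\varphi b)\,{\mathrm d}X_{s}$ well defined and equal to $\mathbb{E}_{0}\!\left[\int_{0}^{t}Z_{s}^{-1}[D_{1}\varphi+\varphi\, b]\,{\mathrm d}X_{s}\,\big|\,\mathcal{F}_{T}^{X}\right]$.

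The main obstacle I anticipate is this last step: the conditional expectation of the ${\mathrm d}X_{s}$-stochastic integral must be rigorously interchanged with $\mathbb{E}_{0}[\cdot|\mathcal{F}_{T}^{X}]$, and the $L^{2}(\,{\mathrm d}s\otimes\mathbb{P}_{0}\,)$-bounds on all the regularized integrands need to be established uniformly in $\varepsilon$ via (\ref{eq: EZX}) extended to $\overline{X}_{\cdot,j}$ for $j=2,\ldots,k$ by iterating Proposition~\ref{eq: Girsanov} $k-1$ times. Beyond that, everything reduces to careful book-keeping of the It\^o expansion and a standard dominated-convergence argument.
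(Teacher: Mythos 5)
Your overall strategy — applying the multivariate It\^o expansion to $Z_{t}^{-1}\varphi(t,\overline{X}_{t,1},\ldots,\overline{X}_{t,k})$, regularizing with $\widetilde Z^{\varepsilon}_{t}$, observing that the $\,{\mathrm d}B_{s,i}$-integrals for $i\ge 2$ vanish upon conditioning, and invoking the conditional Fubini argument — is exactly what the paper means by ``similar reasoning as in the proof of Proposition~\ref{prop: Zakai},'' and those pieces of your sketch are sound.

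However, the key computational assertion — that the $\,{\mathrm d}B_{s,1}$-martingale term and the $\,{\mathrm d}Z_{s}^{-1}$-term combine into the single $\,{\mathrm d}X_{s}$-integral $\int_0^t\rho_{s,k}(\varphi b)\,{\mathrm d}X_s$, with a residual drift producing the extra $D_{1}\varphi\cdot b(s,\overline{X}_{s,1},F_{s,1})$ summand in $\rho_{s,k+1}(\widetilde{\mathcal{A}}_{s}\varphi)$ — does not survive a careful bookkeeping. Collect the three contributions that involve the observation noise: the martingale piece of $d\varphi$, namely $Z_{s}^{-1}D_{1}\varphi\,{\mathrm d}B_{s,1}$; the product term $\varphi\,{\mathrm d}Z_{s}^{-1}=\varphi Z_{s}^{-1}b\,{\mathrm d}X_{s}$; and the cross-variation ${\mathrm d}\langle Z^{-1},\varphi\rangle_{s}=Z_{s}^{-1}b\,D_{1}\varphi\,{\mathrm d}s$. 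Substituting ${\mathrm d}B_{s,1}={\mathrm d}X_{s}-b\,{\mathrm d}s$ into the first piece gives $Z_{s}^{-1}D_{1}\varphi\,{\mathrm d}X_{s}-Z_{s}^{-1}D_{1}\varphi\,b\,{\mathrm d}s$. The $-Z_{s}^{-1}D_{1}\varphi\,b\,{\mathrm d}s$ created by this substitution is exactly cancelled by the $+Z_{s}^{-1}D_{1}\varphi\,b\,{\mathrm d}s$ from the cross variation, so there is \emph{no} residual drift; what is left is $Z_{s}^{-1}\big(D_{1}\varphi+\varphi\,b\big)\,{\mathrm d}X_{s}$, i.e.\ an $\,{\mathrm d}X_{s}$-integrand of the form $\rho_{s,k}\!\big(D_{1}\varphi+\varphi\,b\big)$ rather than $\rho_{s,k}(\varphi\,b)$, and a $\,{\mathrm d}s$-integrand that contains $\sum_{i=1}^{k}D_{i}\varphi\cdot b(s,\overline X_{s,i},F_{s,i})+\tfrac12\sum_{i}D_{i}^{2}\varphi+D_{s}\varphi$ only, without the extra $D_{1}\varphi\cdot b$ term. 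You can confirm the discrepancy by taking $\varphi(t,x_{1},\ldots,x_{k})=x_{1}$, so that $\rho_{t,k}(\varphi)=X_{t}\rho_{t}(\mathbf 1)$: using (\ref{eq: rhot1}) one gets $d(X_{t}\rho_{t}(\mathbf 1))=\big(\rho_{t}(\mathbf 1)+X_{t}\rho_{t,2}(b)\big)\,{\mathrm d}X_{t}+\rho_{t,2}(b)\,{\mathrm d}t$, which matches $\rho_{t,k}(D_{1}\varphi+\varphi b)\,{\mathrm d}X_t+\rho_{t,k+1}(D_1\varphi\cdot b)\,{\mathrm d}t$ but not $\rho_{t,k}(\varphi b)\,{\mathrm d}X_t+\rho_{t,k+1}(2D_1\varphi\cdot b)\,{\mathrm d}t$. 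So the combination you describe does not actually derive the displayed formula: either your It\^o arithmetic needs to be redone (the $D_{1}\varphi$ belongs in the $\,{\mathrm d}X_{s}$-integrand, not as an extra drift), or the stated form of $\rho_{s,k}(\varphi b)$ and $\rho_{s,k+1}(\widetilde{\mathcal A}_s\varphi)$ in the proposition should be flagged as a likely typographical slip. Also beware of asserting that the $\,{\mathrm d}B_{s,1}$-integral has zero conditional expectation: $B_{\cdot,1}$ is not $\mathcal F_T^X$-independent under $\mathbb P_0$ (only $X_{\cdot}-X_0$ is $\mathbb P_0$-Brownian), so only the $i\ge 2$ integrals drop out.
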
 

Now under the assumption (\ref{eq: EZX}), we have the Kallianpur-Striebel formula: $\,\mathbb P_{0}\,$ ($\, \mathbb P \,$)-a.s. 
\[
\pi_{t, k} (\varphi) \, =\,  \frac{\, \rho_{t, k}(\varphi) \,}{\, \rho_{t}( {\bf 1} ) \,} \, ; \quad k \, =\, 2, \ldots , n \, , \, \, 0 \le t \le T \, . 
\]
Then it follows from Proposition \ref{prop: Zakai} that 
\begin{equation} \label{eq: rhot1}
\rho_{t}({\bf 1}) \, =\,  1 + \int^{t}_{0} \rho_{s, 2} (b) {\mathrm d} X_{s} \, =\,  1 + \int^{t}_{0} \rho_{s} ({\bf 1}) \pi_{s, 2} (b) {\mathrm d} X_{s} \, ; \quad 0 \le t \le T \, . 
\end{equation}

For fixed $\,\varepsilon > 0\,$, applying Ito's formula to $\, (1/2) \log ( \varepsilon + \lvert \rho_{t} ( {\bf 1}) \rvert^{2} ) \,$ with (\ref{eq: rhot1}), we obtain 
\[
{\mathrm d} \Big( \frac{\,1\,}{\,2\,} \log ( \varepsilon + \lvert \rho_{t} ( {\bf 1}) \rvert^{2} ) \Big) \, =\,  \frac{\,\lvert \rho_{t} ( {\bf 1}) \rvert^{2} \pi_{t, 2} (b) \,}{\, \varepsilon + \lvert \rho_{t} ( {\bf 1}) \rvert^{2}\,} {\mathrm d} X_{t} + \frac{\,(\varepsilon - \lvert \rho_{t} ( {\bf 1}) \rvert^{2}) \lvert \rho_{t} ( {\bf 1}) \rvert^{2} \lvert \pi_{t} ( {\bf 1}) \rvert^{2}\,}{\,2(\varepsilon + \lvert \rho_{t} ( {\bf 1}) \rvert^{2}\,} {\mathrm d} t 
\]
for $\, 0 \le t \le T\,$. Under the assumption of Proposition \ref{prop: Zakai2}, letting $\, \varepsilon \downarrow 0 \, $, by the dominated convergence theorem, we have $\, 
\log \rho_{t}( {\bf 1}) \, =\,  \int^{t}_{0} \pi_{s, 2}( b) {\mathrm d} X_{s} - \frac{\,1\,}{\,2\,} \int^{t}_{0} \lvert \pi_{s, 2}(b) \rvert^{2} {\mathrm d} s \,$, 
and hence   
\[
\mathbb E_{0} [ Z_{t}^{-1} \vert \mathcal F^{X}_{T} ] \, =\, \rho_{t}( {\bf 1}) \, =\,  \exp \Big( \int^{t}_{0} \pi_{s, 2} (b) {\mathrm d} X_{s} - \frac{\,1\,}{\,2\,} \int^{t}_{0} \lvert \pi_{s, 2}(b) \rvert^{2} {\mathrm d} s \Big) \, ; \quad  0 \le t \le T\, . 
\]

\begin{prop}\label{prop: KushnerStrat}  In addition to the assumption in Proposition \ref{prop: Zakai}, let us assume 
\begin{equation} \label{eq: KScondition}
\mathbb P \Big( \int^{t}_{0} \lvert \pi_{s,k} (b) \rvert ^{2} {\mathrm d} s < \infty \Big) \, =\,  1 \, ; \quad k \, =\, 2, \ldots , n \, , \, \, 0 \le t \le T \, . 
\end{equation}
Then the conditional expectation $\, \pi_{\cdot,k} (\varphi) \,$ in $(\ref{eq: pitk})$ with respect to $\, \mathcal F_{\cdot}^{X}\,$ satisfies the following system of Kushner-Stratonovich equations  
\begin{equation} \label{eq: KushnerStratonovich}
\pi_{t,k}(\varphi) \, =\, \pi_{0, k}(\varphi) + \int^{t}_{0} (\pi_{s, k} (\varphi b) - \pi_{s,k}(b) \pi_{s,k}(\varphi) ) \big( {\mathrm d} X_{s}  - \pi_{s,k}(b) {\mathrm d} s \big) + \int^{t}_{0} \pi_{s,k+1}( \widetilde{\mathcal A}_{s} \varphi) {\mathrm d} s  
\end{equation}
where we define $\, 
\pi_{s,k}(\varphi b) \, :=\, \rho_{s,k}(\varphi b) / \rho_{s} ( {\bf 1}) \, $, $\,  \pi_{s,k+1}( \widetilde{ \mathcal A}_{s} \varphi) \, :=\, \rho_{s,k+1} ( \widetilde{\mathcal A}_{s} \varphi ) \, / \, \rho_{s} ( {\bf 1} ) \, $ from $(\ref{eq: rhoskphib})$ for $\, 0 \le t \le T\,$, $\, k \, =\,  2, \ldots , n\,$ and for arbitrary $\,n \ge 2\,$. 
\end{prop}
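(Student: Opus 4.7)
The plan is to deduce the Kushner--Stratonovich system~(\ref{eq: KushnerStratonovich}) from the Zakai system~(\ref{eq: Zakai3}) via the Kallianpur--Striebel identity $\pi_{t,k}(\varphi) = \rho_{t,k}(\varphi)/\rho_{t}({\bf 1})$, following the standard route (cf.\ Theorem~3.30 of Bain \& Crisan (2009)) but carefully tracking the ``particle index'' $k$. As a first observation, since $b(s,X_{s,1},F_{s,1})$ depends only on $\overline{X}_{s,1}$ and $\overline{X}_{s,2}$, we have for every $k\ge 2$,
\[
\pi_{s,k}(b) \, =\, \mathbb E\bigl[ b(s,\overline{X}_{s,1},F_{s,1}) \, \vert \, \mathcal F_{s}^{X}\bigr] \, =\, \pi_{s,2}(b) \, ; \quad 0 \le s \le T \, ,
\]
so the object appearing in (\ref{eq: KushnerStratonovich}) is actually the same for all $k\ge 2$. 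This is why the derived equation for $\log \rho_{t}({\bf 1})$ obtained just before the statement (involving only $\pi_{s,2}(b)$) is enough to drive the dynamics of every $\pi_{t,k}(\varphi)$.

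Next, I would apply It\^o's product rule to $\pi_{t,k}(\varphi)=\rho_{t,k}(\varphi)\cdot R_{t}$, where $R_{t}:=\rho_{t}({\bf 1})^{-1}$. From the explicit exponential expression derived for $\rho_{\cdot}({\bf 1})$, and the fact that under $\,\mathbb P_{0}\,$ the observation $X_{\cdot}$ is a standard Brownian motion (so $\langle X \rangle_{t}=t$), we get
\[
{\mathrm d} R_{t} \, =\, - R_{t} \pi_{t,2}(b) \, {\mathrm d} X_{t} + R_{t} \lvert \pi_{t,2}(b)\rvert^{2} \, {\mathrm d} t \, .
\]
Combining this with the Zakai equation~(\ref{eq: Zakai3}) and the cross-variation
\[
\,{\mathrm d} \langle \rho_{\cdot,k}(\varphi), R_{\cdot} \rangle_{t} \, =\, - \rho_{t,k}(\varphi b) \, R_{t} \, \pi_{t,2}(b) \, {\mathrm d} t \, ,
\]
I would expand $\,{\mathrm d} \pi_{t,k}(\varphi) = R_{t}\,{\mathrm d}\rho_{t,k}(\varphi) + \rho_{t,k}(\varphi)\,{\mathrm d} R_{t} + {\mathrm d} \langle \rho_{\cdot,k}(\varphi), R_{\cdot}\rangle_{t}\,$ and collect terms according to $\,\pi_{\cdot,k}(\varphi)= R_{\cdot}\rho_{\cdot,k}(\varphi)\,$, $\,\pi_{\cdot,k}(\varphi b)=R_{\cdot}\rho_{\cdot,k}(\varphi b)\,$, $\,\pi_{\cdot,k+1}(\widetilde{\mathcal A}_{\cdot}\varphi)= R_{\cdot}\rho_{\cdot,k+1}(\widetilde{\mathcal A}_{\cdot}\varphi)\,$, to obtain, after a short algebraic rearrangement (factoring out $\pi_{t,k}(\varphi b) - \pi_{t,k}(\varphi)\pi_{t,2}(b)$),
\[
{\mathrm d} \pi_{t,k}(\varphi) \, =\, \bigl( \pi_{t,k}(\varphi b) - \pi_{t,k}(b)\pi_{t,k}(\varphi)\bigr)\bigl( {\mathrm d} X_{t} - \pi_{t,k}(b)\, {\mathrm d} t\bigr) + \pi_{t,k+1}(\widetilde{\mathcal A}_{t}\varphi)\,{\mathrm d} t \, ,
\]
which is (\ref{eq: KushnerStratonovich}) in differential form. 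The initial condition $\pi_{0,k}(\varphi)$ follows from $R_{0}=1$ and $\rho_{0,k}(\varphi)=\pi_{0,k}(\varphi)$.

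The main obstacle I expect is \emph{not} the algebraic manipulation, which is essentially bookkeeping, but the rigorous justification that every stochastic integral above is well-defined as an It\^o integral under $\mathbb P$ (equivalently, under $\mathbb P_{0}$). This is exactly where the additional hypothesis~(\ref{eq: KScondition}) enters: it guarantees $\mathbb P$-a.s.\ finiteness of $\int_{0}^{t}\lvert\pi_{s,k}(b)\rvert^{2}{\mathrm d} s$, so that the innovation process $\widehat{X}_{t} := X_{t} - \int_{0}^{t}\pi_{s,2}(b)\,{\mathrm d} s$ is a continuous $(\mathcal F_{t}^{X},\mathbb P)$-local martingale (in fact a $\mathbb P$-Brownian motion, by L\'evy's characterization, since $\langle \widehat{X}\rangle_{t}=t$), and that the driver $(\pi_{s,k}(\varphi b) - \pi_{s,k}(b)\pi_{s,k}(\varphi))$, which is bounded by $2\lVert \varphi\rVert_{\infty}\lvert\pi_{s,k}(b)\rvert$ plus a bounded term, is integrable against ${\mathrm d}\widehat{X}_{s}$. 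Combined with $\varphi \in C^{2}_{0}([0,T]\times \mathbb R^{k})$ (so $\widetilde{\mathcal A}_{s}\varphi$ is bounded in the $\varphi$-terms and linearly growing in $\overline{X}$), the integrability conditions (\ref{eq: EZX}) and (\ref{eq: cZakai}) carried over from Proposition~\ref{prop: Zakai2} close the argument. One also needs to verify that the dominated convergence arguments used in the $\varepsilon\downarrow 0$ limits (as in the proof of Proposition~\ref{prop: Zakai}) carry through for each $k$, but this is a straightforward extension.
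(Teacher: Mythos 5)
Your proposal is correct and is essentially the paper's proof, which simply invokes It\^o's formula applied to $\pi_{t,k}(\varphi)=\rho_{t,k}(\varphi)/\rho_t(\mathbf 1)$ together with the representations of $\rho_\cdot(\mathbf 1)$ and $\rho_{\cdot,k}(\varphi)$ from (\ref{eq: Zakai3}); you have supplied the (correct) algebraic details the paper leaves to the reader. The observation that $\pi_{s,k}(b)=\pi_{s,2}(b)$ for every $k\ge 2$ (because $b(s,X_{s,1},F_{s,1})$ depends only on $\overline X_{s,1},\overline X_{s,2}$) is a useful clarification that the paper leaves implicit.
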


\begin{proof} The proof is now straightforward by It\^o's formula under the condition (\ref{eq: KScondition}) thanks to the representations of $\, \rho_{\cdot} ({\bf 1}) \,$ and $\,\rho_{\cdot, k} (\varphi) \,$ in (\ref{eq: Zakai3}). 
\end{proof}

\begin{remark}
As in Remarks \ref{rem: 1.2} and \ref{rem: 3.1}, when $\, u \in (0, 1]\,$, as a description of the conditional expectations $\,\pi_{\cdot, k}\,$, $\, \rho_{\cdot, k}\,$, $\, k \, =\,  2, \ldots , n\,$ with respect to $\, \mathcal F_{T}^{X}\,$, the system (\ref{eq: Zakai3}) or the system (\ref{eq: KushnerStratonovich}) has an infinite-dimensional aspect. This is because $\,\rho_{\cdot, k}\,$ in (\ref{eq: Zakai3}) is represented by the integral of $\, \rho_{\cdot, k}\,$ with respect to $\, {\mathrm d} X_{s}\,$ and the integral of $\, \rho_{\cdot, k+1}\,$ with respect to $\,  {\mathrm d} s\,$ and because $\,\rho_{\cdot, k+1}\,$ is the conditional expectation of function of $\, (\overline{X}_{\cdot, 1}, \ldots , \overline{X}_{\cdot, k+1}) \,$ if $\, u \in (0, 1]\,$ for every $\, k \, =\,  2, \ldots , n\,$. The system (\ref{eq: KushnerStratonovich}) for $\,\pi_{\cdot,k} \,$ has the same aspect, inherited from $\,\rho_{\cdot, k}\,$ in (\ref{eq: Zakai3}). Since $\, n \,$ is arbitrary, the chain of such descriptions continues. Unique characterizations of conditional expectations as a solution to this infinite system of Zakai equations (\ref{eq: Zakai3}) and  Kushner-Stratonovich equations (\ref{eq: KushnerStratonovich}) is out of scope of the current paper. Numerical methods, particle methods and their comparisons for solving such systems are also interesting ongoing projects. \hfill $\,\square\,$
\end{remark}

\subsection{Connection to the infinite-dimensional Ornstein-Uhlenbeck process} \label{sec: iOU}
Let us take a time-homogeneous linear functional $\, b(t, x, \mu) \, :=\, - \int_{\mathbb R} (x- y) \mu ({\mathrm d} y) \,$ for $\, t \ge 0\,$, $\, x \in \mathbb R\,$, $\, \mu \in \mathcal M (\mathbb R)\,$ of mean-reverting type. Then, (\ref{eq: NDu}) is reduced to the stochastic differential equation 
\begin{equation} \label{eq: E1}
{\mathrm d} X_{t}^{(u)} \, =\,  - \big(u\,  (X_{t}^{(u)} - \widetilde{X}_{t}^{(u)})  + (1-u) ( X_{t}^{(u)} - \mathbb E [ X_{t}^{(u)}] ) \big )\,  {\mathrm d} t + {\mathrm d} B_{t} \, ; \quad t \ge 0 \, 
\end{equation}
for each $\, u \in [0, 1]\,$, 
where we recall that $\widetilde{X}^{(u)}$ has the same law as ${X}^{(u)}$ and is independent of the Brownian motion $B$.

In the case $\,u \, =\, 0\,$, we have $X_{\cdot}^{(0)}=\,X_{\cdot}^{\bullet}$, where $X_{\cdot}^{\bullet}$ is the solution of the pure 
McKean-Vlasov stochastic differential equation
\begin{equation} \label{eq: E2}
{\mathrm d} X_{t}^{\bullet} \, =\,  - (X_{t}^{\bullet} - \mathbb E[ X_{t}^{\bullet}] ) {\mathrm d} t + {\mathrm d} B_{t} \, ,\quad t \ge 0 \, .
\end{equation} 

In the case $\,u \, =\, 1\,$, we have $X^{(1)}_{\cdot}=X_{\cdot}^{\dagger}$, where $X_{\cdot}^{\dagger}$ is given by
\begin{equation} \label{eq: E3}
{\mathrm d} X_{t}^{\dagger} \, =\, - (X_{t}^{\dagger} - \widetilde{X}_{t}^{\dagger}) {\mathrm d} t + {\mathrm d} B_{t} \, , \quad t \ge 0 \, ,
\end{equation}
with $\widetilde{X}^{\dagger}$ having the same law as ${X}^{\dagger}$ and  being independent of $B$.

Coming back to the general case with $u\in [0,1]$ and 
setting a fixed initial value $\, X_{0}^{(u)} \, =\,  0 \,$, we see that the expectations are constant in time 
\begin{equation} \label{eq: E4}
\mathbb E [ X_{t}^{(u)} ] \, =\, \mathbb E [ \widetilde{X}_{t}^{(u)} ] \, =\,  \mathbb E [ X_{t}^{\bullet} ] \, =\,  \mathbb E [ X_{t}^{\dagger}] \, =\, 0  \, , \quad t \ge 0 \, , \, \, u \in [0, 1] \, ,  
\end{equation}
with an explicitly solvable Gaussian pair $\, ( X^{(u)}(t), \widetilde{X}^{(u)}(t))\,$ for $\, t \ge 0 \, $, $\,  u \in [0, 1] \,$ 
\begin{equation} \label{eq: E0} 
\begin{split}
X_{t}^{(u)}\, &=\, \int^{t}_{0} e^{- (t-s)} u \widetilde{X}^{(u)}_{s} {\mathrm d} s + \int^{t}_{0} e^{-(t-s)} {\mathrm d} B_{s} \, , \\
\widetilde{X}_{t}^{(u)} \, &=\, \int^{t}_{0} \sum_{k=0}^{\infty}  \mathfrak p_{0, k} (t-s; u) \, {\mathrm d} W_{s, k} \,, \quad \mathfrak p_{0, k} (t-s; u)\, :=\,  \frac{u^{k}(t-s)^{k}}{k!} e^{-(t-s)} \, , 
\end{split}
\end{equation}
where $\, (W^{k}_{\cdot}, k \ge 0)\,$ is a sequence of independent, one-dimensional standard Brownian motions, independent of the Brownian motion $\, B(\cdot)\,$. 
Note that the integrand $\, \mathfrak p_{0, k} (t-s; u)\,$, $\,k \in \mathbb N_{0}\,$ in (\ref{eq: E0}) is a (taboo) transition probability $\, \mathbb P ( M(t-s) = 	k \vert M(0) \, =\, 0) \,$ of a continuous-time Markov chain $\,M(\cdot) \,$ in the state space $\, \mathbb N_{0}\,$ with generator matrix $\, {\bm Q} \, =\, (q_{i,j})_{i,j \in \mathbb N_{0}} \,$ with $\, q_{i,i+1} \, =\,  u \in [0, 1]\,$, $\, q_{i,i} \, =\, -1\,$ and $\,q_{i,j} \, =\, 0\,$ for the other entries $\,j \, \neq\, i, i+1\,$. When $\, u \, =\, 0 \,$, $\,{\bm Q}\,$ is the generator of a Markov chain with jump rate $\, 1\,$ from state $\,i\,$ and is killed immediately. When $\,u \, =\, 1\,$, $\, {\bm Q} \,$ is the generator of a Poisson process with rate $\,1\,$. When $\,u \, \in (0, 1)\, \,$, it jumps from $\,i\,$ to $\, i+1\,$ with probability $\,u\,$ and killed with probability $\, (1-u) \,$. Thus we interpret $\,{\mathfrak p}_{0, k}(t-s; u) \,$ as $\,(0, k) \,$-element of the $\,\mathbb N_{0} \times \mathbb N_{0}\,$-dimensional matrix exponential $\, e^{(t-s)Q}\,$, i.e., 
\[
\, (\, \mathfrak p_{i, j}(t-s; u) \, :=\,  \mathbb P ( M(t-s) \, =\, j \vert M(0) \, =\, i ) \, , i,j \in \mathbb N_{0}\, ) \, \equiv\, ((e^{(t-s) {\bm Q}})_{i, j}, i,j \in \mathbb N_{0})\,; \quad t \ge s \ge 0 \, .  
\]
For the matrix exponential $\, e^{t {\bm Q}}\,$, $\, t \ge 0\,$ of such $\, {\bm Q}\,$, see for example, {Friedman} (1971). Then we have a {Feynman-Kac} representation formula 
\begin{equation} \label{eq: E-FK}
\widetilde{X}^{(u)}_{t} \, =\,  \mathbb E^{M} \Big[ \int^{t}_{0} \sum_{k=0}^{\infty} {\bf 1}_{\{M(t-s) \, =\, k\}} {\mathrm d} W_{s, k} \vert M(0) \, =\, 0\Big]\, ; \quad t \ge 0 \, , 
\end{equation}
where the expectation is taken with respect to the probability induced by the Markov chain $\, M(\cdot)\,$, independent of the Brownian motions $\, (W_{\cdot, k}, k \in \mathbb N_{0} )\,$. 

\medskip 

Indeed, by Proposition \ref{prop: 1.4}, the solution (\ref{eq: E0}) is obtained by an infinite particle approximation 
\begin{equation} \label{eq: E-1}
{\mathrm d} X_{t,k}^{(u)}\, =\,  - ( X_{t,k}^{(u)} - u X_{t,k+1}^{(u)}) {\mathrm d} t + {\mathrm d} W_{t, k} \, ; \quad t \ge 0 \, , \quad k \in \mathbb N_{0} 
\end{equation}
of the simplified form of (\ref{eq: E1}), that is,  
\[
{\mathrm d} X_{t}^{(u)}\, =\,  - ( X_{t}^{(u)} - u \widetilde{X}^{(u)}_{t}) {\mathrm d} t + {\mathrm d} B_{t} \, ; \quad t \ge 0 \,.   
\]
Here we assume $\, {\bm \sigma} ( X_{t, k+1}^{(u)}, t \ge 0) \,$ and $\, {\bm \sigma} ( W_{t,k}, t \ge 0) \,$ are independent for every $\,k \in \mathbb N_{0}\,$. The infinite particle system (\ref{eq: E-1}) can be represented as an infinite-dimensional Ornstein-Uhlenback  stochastic differential equation or more generally, stochastic evolution equation (see e.g., {Dawson} (1972), {Da Prato \& Zabczyk} (1992), {Kallianpur \& Xiong} (1995), {Batt, Kallianpur, Karandikar, \& Xiong} (1998), {Athreya, Bass \& Perkins} (2005) for more general results in Hilbert spaces)
\begin{equation} \label{eq: E-2}
{\mathrm d} {\bm X}_{t} \, =\, {\bm Q} {\bm X}_{t}\,  {\mathrm d} t + {\mathrm d} {\bm W}_{t} \, ,  
\end{equation}
where $\, {\bm X}_{\cdot}\, :=\, (X_{\cdot, k}^{(u)}, k \in \mathbb N_{0} ) \,$ with $\, {\bm X}_{0} \, =\, {\bm 0}\,$, and $\, {\bm W}_{\cdot} \, :=\, (W_{\cdot, k},k \in \mathbb N_{0} ) \,$. Note that the transition probabilities $\, \mathbb P ( M(t) \, =\, k \vert M(0) \, =\, i ) \, =\, (e^{t {\bm Q}})_{i,k}\,$, $\,i, k \in \mathbb N_{0}\,$ of the continuous-time Markov chain $\, M(\cdot) \,$ defined in the previous paragraph satisfies the backward {Kolmogorov} equation 
\[
\frac{\, {\mathrm d} \,}{\, {\mathrm d} t\,} e^{t {\bm Q}}\, =\,  {\bm Q} \, e^{t {\bm Q}} \, ; \quad t \ge 0 \, . 
\]
Thus, by {It\^o}'s formula we directly verify 
\[
{\mathrm d} \Big( \int^{t}_{0} e^{(t-s) {\bm Q}} {\mathrm d} {\bm W}_{s} \Big) \, =\,  \Big( {\bm Q}  \int^{t}_{0} e^{(t-s) {\bm Q}} {\mathrm d} {\bm W}_{s} \Big)\, {\mathrm d} t + {\mathrm d} {\bm  W}_{t} \, ; \quad t \ge 0 \, , 
\]
and hence 
\[
{\bm X}_{t} \, =\, \int^{t}_{0} e^{(t-s) {\bm Q}} \, {\mathrm d} {\bm W}_{s} \, ; \quad t \ge 0 \, , 
\]
is a solution to (\ref{eq: E-2}). Therefore,  (\ref{eq: E0}) is the solution to (\ref{eq: E1}).  Although $\, {\bm Q} \,$ has the specific form here, it is easy to see that in general, the {Feynman-Kac} formula (\ref{eq: E-FK}) still holds for the infinite-dimensional Ornsten-Uhlenbeck process with a class of generators  $\, {\bm Q} \,$ which form a {Banach} algebra (e.g., the generator of the discrete-state, compound Poisson processes, see {Friedman} (1971)).  

\bigskip 

\subsubsection{Asymptotic Dichotomy}

\begin{table}
\begin{center}
\begin{tabular}{|c|l|c|c|}
\hline 
$\,u\,$ & Interaction Type & Asymptotic Variance & Asymptotic Independence \\ 
&&& (Propagation of chaos)\\
\hline \hline 
$\,u=0 \,$ & Purely mean-field  \eqref{eq: E2}&  & Independent \\ 
\cline{1-2} \cline{4-4} 
$u\in (0,1)$& Mixed interaction  & {Stabilized}& \\
&  &  & Dependent\\
\cline{1-3} 
$\,u=1\,$ & Purely directed chain \eqref{eq: E3}& Explosive &  \\ 
\hline
\end{tabular}
\caption{Different behaviors for different values of $\,u\,$ in the linear, Gaussian case (\ref{eq: E1}).} \label{table: 1} 

Asymptotic variances are given in Remark \ref{remark 4.2}.  Dependence is described in (\ref{eq: variance-covariance2}). 
\end{center}
\end{table}

With $\, X_{0}^{(u)} \, =\, 0\,$ still, it follows from (\ref{eq: E0}) that the auto covariance and cross covariance are 
\begin{equation} \label{eq: variance-covariance2}
\begin{split}
\mathbb E [ X_{s}^{(u)} X_{t}^{(u)} ] \, & =\, \mathbb E [ \widetilde{X}_{s}^{(u)} \widetilde{X}_{t}^{(u)} ]\, =\,  e^{-(t-s)} \int^{s}_{0} e^{-2v} I_{0} ( 2 u \sqrt{(t-s+v)v} ) {\mathrm d} v \, ; \quad 0 \le s \le t \, , 
\\
\mathbb E [ X_{s}^{(u)} \widetilde{X}^{(u)}_{t}] \, & =\,  u \int^{s}_{0} e^{-(s-v)} \mathbb E [ \widetilde{X}^{(u)}_{v} \widetilde{X}^{(u)}_{t} ] {\mathrm d} v \, =\,  u \int^{s}_{0} e^{-(s-v)} \mathbb E [ {X}^{(u)}_{v} {X}^{(u)}_{t} ] {\mathrm d} v \, ; \quad t , s \ge  0 \, 
\end{split}
\end{equation}
for $\, u \in [0,1 ]\,$. Here $\,I_{\nu}(\cdot) \,$ is the modified Bessel function of the first kind with index $\, \nu \,$, defined by 
\[
I_{\nu}(x) \, :=\,  \sum_{k=0}^{\infty} \frac{ ( x/ 2)^{2k+\nu}}{\, \Gamma (k+1) \cdot \Gamma (\nu + k + 1)\, }   \, ; \quad x > 0 \, , \nu \ge - 1 \, . 
\]
Note that the Bessel functions $\, I_{0}(x) \,$ and $\, I_{1}(x)\,$ grow with the order of $\, O ( e^{x} \, / \, \sqrt{ 2 \pi x} ) \,$ as $\, x \to \infty\,$. 

\begin{remark}[Asymptotic dichotomy of (\ref{eq: E1})] \label{remark 4.2} The asymptotic behaviors of their variances as $\, t \to \infty \,$ are dichotomous as in Table \ref{table: 1}: 
\begin{equation} \label{eq: E5}
\, \text{Var}( X_{t}^{(u)}) \, =\, \int^{t}_{0} e^{-2v} I_{0} ( 2uv) {\mathrm d} v  \, =\, \left \{ \begin{array}{cc} 
O(1) \,, & \, u \in [0, 1) \, , \\ 
O ( \sqrt{ t } ) \, , & \, u \, =\, 1 \,,  \end{array} \right . 
\end{equation}
\[
\text{ with } \quad \text{Var}( X_{t}^{(0)}) \, =\, \text{Var}( X_{t}^{\bullet}) \, =\,  \frac{\, 1 - e^{-2t}\, }{2}  \, , \quad 
\quad \text{Var}( X_{t}^{(1)}) =\text{Var}( X_{t}^{\dagger}) \, =\,  t \,e^{-2t} ( I_{0}(2t) + I_{1}(2t) ) \,.     
\]
\begin{enumerate}
\item When $\,u \, \in  [0, 1) \,$, the process $\, X^{(u)}_{\cdot}  \,$ is positive recurrent and its stationary distribution is Gaussian with mean $\,0\,$ and variance 
\begin{equation} \label{eq: varOUu}
\lim_{t\to \infty} \text{Var}( X_{t}^{(u)}) \, =\,  \int^{\infty}_{0} e^{-2 v} I_{0} (2 u v) {\mathrm d} v \, =\, \frac{\,1\,}{\,2 \sqrt{ 1 - u^{2}}  }  \, < \infty\,. 
\end{equation}
In particular, when $\,u \, =\, 0\,$, $\, X^{(0)}_{\cdot} \, =\, X^{\bullet}_{\cdot}\,$ is an  Ornstein-Uhlenbeck process with a stationary Gaussian distribution of mean $\,0\,$ and variance $\, 1\, / \, 2\,$, independent of $\, \widetilde{X}^{(0)}_{\cdot} \, =\,  \widetilde{X}^{\bullet}_{\cdot}\,$. 

\item When $\,u \, =\, 1\,$, the process $\, X^{(1)}_{\cdot}\, =\, X^{\dagger}_{\cdot}\,$ is a mean zero Gaussian process with growing variance of the order $\,O (\sqrt{t})\,$ with $\, \lim_{t\to \infty} \text{Var} ( X_{t}^{\dagger}) \, =\,  \infty\,$, given by (\ref{eq: E5}) and covariances 
\[
\mathbb E [ X_{s}^{\dagger} X_{t}^{\dagger}] \, =\, \mathbb E [ \widetilde{X}_{s}^{\dagger} \widetilde{X}_{t}^{\dagger}]\, =\, e^{-(t-s)} \int^{s}_{0} e^{-2 v} I_{0} ( 2 \sqrt{ (t -s + v) v } ) {\mathrm d} v \, ; \quad 0 \le s \le t \, .  
\]
In particular, 
$\,\mathbb E [ X_{s}^{\dagger} X_{t}^{\dagger}] \, =\,  O (e^{-(t - 2 \sqrt{ (t + s) s})} t^{-1/4} ) \,$ for large $\,t \to \infty\,$. 
\end{enumerate} 

This asymptotic dichotomy is an answer to the first question posed in section \ref{sec: Chain}. Namely, the large system of type (\ref{eq: chain1}) diverges widely, while the large system of type (\ref{eq: MF1}) converges to the stationary distribution as $\, t \to \infty\,$ under the linear case of (\ref{eq: E1}). \hfill $\,\square\,$
\end{remark}

\begin{remark}[Repulsive case]
Instead of mean-reverting, if the drift functional $\,b\,$ is of {\it repulsive} type $\, 
\, b(t, x, \mu) \, :=\, \int_{\mathbb R} (x - y) \mu ( {\mathrm d} y) \,$, then the resulting paired process in (\ref{eq: NDu}) with $\,u \, =\, 1\,$ is described by 
\begin{equation}
{\mathrm d} X_{t}^{\dagger\!\dagger} \, =\, ( X_{t}^{\dagger\!\dagger} - \widetilde{X}_{t}^{\dagger\!\dagger}) {\mathrm d} t + {\mathrm d} B_{t}  \, ; \quad t \ge  0 \, 
\end{equation}
with the conditions (\ref{eq: Fu})-(\ref{eq: NDlaw}). 
The solution with the initial values $\,X_{0}^{\dagger\!\dagger} \, =\, \widetilde{X}_{0}^{\dagger\!\dagger} \, =\, 0 \,$ is given by 
\[
X^{\dagger\!\dagger}_{t} \, =\, \int^{t}_{0} e^{t-s} \widetilde{X}^{\dagger\!\dagger}_{s} {\mathrm d} s + \int^{t}_{0} e^{t-s} {\mathrm d} B_{s}\, , \quad 
\widetilde{X}^{\dagger\!\dagger}_{t} \, =\,  \int^{t}_{0} \sum_{k=0}^{\infty} e^{t-s} \cdot \frac{\,(-1)^{k} (t-s)^{k}\,}{\,k!\,} \, {\mathrm d} W_{s, k} \, ; \quad t \ge 0 \, 
\]
for independent Brownian motions $\, W_{\cdot, k}\,$, $\,k \in \mathbb N_{0}\,$, independent of $\, B_{\cdot}\,$. In this case the variance grows {\it exponentially} fast, i.e., $\, 
\text{Var} ( X_{t}^{\dagger\!\dagger}) \, =\, t e^{2t} (I_{0}(2t) - I_{1}(2t)) \, $, $t \ge 0 \,$. 
\hfill $\,\square\,$
\end{remark}

\begin{remark}[Discrete-time time series] The discrete-time version of (\ref{eq: E1}) with distributional constraints can be defined by the difference equation for $\, (X_{k}, \widetilde{X}_{k}) \, =\, (X_{k}^{(u)}, \widetilde{X}_{k}^{(u)})\,$, $\,k \, =\, 0, 1, \ldots \,$, for example, given constants $\, a \in (0, 1)\,$, $\, u \in [0, 1]\,$, 
\begin{equation} \label{eq: discrete-time} 
X_{k} \, =\, a X_{k-1}  + (1-a) ( u \widetilde{X}_{k-1} + (1-u) \mathbb E [ X_{k-1}] ) + \varepsilon_{k} \, ; \quad k \, =\,  1, 2, \ldots 
\end{equation}
where we assume $\, \text{Law} ( \{X_{k}, k \, =\, 0, 1, 2, \ldots \}) \equiv \text{Law} ( \{\widetilde{X}_{k}, k \, =\, 0, 1, 2, \ldots \} ) \,$ and the independently, identically distributed noise sequence $\, \varepsilon_{k}\,$, $\,k \ge 1\,$ is independent of $\, \widetilde{X}_{\cdot}\,$. We shall solve for the joint distribution of $\, (X_{k}, \widetilde{X}_{k})\,$, $\, k \ge 0\,$. Again for simplicity, let us assume $\, X_{0} \, =\, 0 \, =\,  \widetilde{X}_{0}\,$. Then it reduces to $\, \mathbb E [ X_{k}] \, =\,  \mathbb E [ \widetilde{X}_{k}]\,$, $\, k \, =\, 0, 1, 2, \ldots \,$, and hence to $\, X_{k} \, =\, a X_{k-1} + (1-a)u \widetilde{X}_{k-1} + \varepsilon_{k}\,$; $\,k \, =\,  1, 2, \ldots \,$ with distributional constraints. 

By recursive substitutions, we have $\, X_{1} \, =\,  \varepsilon_{1} \,$, $\,X_{2} \, =\,  a X_{1} + (1-a) u \widetilde{X}_{1} + \varepsilon_{2}\,$, $\,X_{3} \, =\,  a X_{2} + (1-a) u \widetilde{X}_{2} + \varepsilon_{3}\,$, and $\, 
X_{n} \, =\,  u \sum_{k=1}^{n-1} a^{k-1} ( 1-a) \widetilde{X}_{n-k} + \sum_{k=0}^{n-1} a^{k} \varepsilon_{n-k} \, $. Thanks to the distributional constraints, we represent the distribution of the solution to (\ref{eq: discrete-time}) as 
\[
X_{n} \, =\,  \sum_{0 \le \ell \le k \le n-1} {k \choose \ell} u^{\ell } ( 1- a)^{\ell} a^{k-\ell} \varepsilon_{n-k, \ell} \, , \quad \widetilde{X}_{n} \, =\, \sum_{0 \le \ell \le k \le n-1} {k \choose \ell } u^{\ell } ( 1- a)^{\ell} a^{k-\ell} \varepsilon_{n-k, \ell+1}\, , 
\]
where $\, \varepsilon_{n,m}\,$, $\,n, m \in \mathbb N\,$ are independently, identically distributed noise with $\, \varepsilon_{n, 0} \, =\,  \varepsilon_{n}\,$ for $\,n \in \mathbb N\,$. While the stochastic kernel in the stochastic integral in (\ref{eq: E0}) for the solution to the continuous time equation (\ref{eq: E1}) is a Poisson probability, the stochastic kernel for the solution to the discrete time equation (\ref{eq: discrete-time}) is a binomial probability. The variance and covariances can be calculated, e.g., 
\[
\mathbb E [ X_{n}^{2}] \, =\,  \sum_{k=0}^{n-1}\sum_{\ell=0}^{k} {k \choose \ell} ^{2} u^{2\ell} ( 1- a)^{2\ell} a^{2(k-\ell)} \, =\,  \sum_{k=0}^{n-1} u^{k} (1-a)^{k} \, _{2} F_{1} \Big( -k, -k, 1 ; \, \frac{a^{2}}{\,  ( 1-a)^{2}\, }\Big) \, , 
\]
where $\, _{2} F_{1}(\cdot)\,$ is the Gauss hypergeometric function. 
\hfill $\,\square\,$
\end{remark}

\begin{remark} We may generalize these explicit examples in this section to time-inhomogeneous, linear equations, where the dependence on expectations and marginal laws remain to exist in the expressions. The resulting expressions would become more complicated.  Here we demonstrate some simple examples with the connection to the infinite-dimensional Ornstein-Uhlenbeck processes. \hfill $\,\square\,$
\end{remark}

\subsubsection{Consistent estimation}
Let us denote by $\, \mathcal F_{t}, t \ge 0 \,$ the filtration generated by the solution pair $\, (X_{\cdot}, \widetilde{X}_{\cdot}) \, :=\, (X_{\cdot}^{(u)}, \widetilde{X}_{\cdot}^{(u)})\,$ in (\ref{eq: E1}). Thanks to the Girsanov theorem, the log Radon-Nikodym derivative of the solution $\, \mathbb P^{(u)}\,$ with respect to the Wiener measure $\, \mathbb P_{0}\,$ is given by 
\[
\log \frac{\, {\mathrm d} \mathbb P^{(u)}\,}{\, {\mathrm d} \mathbb P_{0}\,} \Big \vert_{\mathcal F_{T}} \, =\,  \int^{T}_{0} ( X_{t}- u \widetilde{X}_{t}) {\mathrm d} X_{t} + \frac{\,1\,}{\,2\,} \int^{T}_{0} ( X_{t}- u \widetilde{X}_{t})^{2} {\mathrm d} t \, . 
\]
Thus given $\,\mathcal F^{X}_{T}\,$, the observer may maximize the conditional log likelihood function  
\[
\mathbb E \Big[ - \log  \Big( \frac{\, {\mathrm d} \mathbb P^{(u)}\,}{\, {\mathrm d} \mathbb P_{0}\,} \Big \vert_{\mathcal F_{T}} \Big) \Big \vert \mathcal F^{X}_{T} \Big] 
\]
with respect to $\,u\,$, and formally obtain a unique maximizer, corresponding to (\ref{eq: MLE0}),  
\begin{equation} \label{eq: MLE}
\widehat{u} \, :=\, \Big( \int^{T}_{0} \mathbb E \big[  \widetilde{X}_{t}^{2} \vert \mathcal F_{T}^{X} \big] {\mathrm d} t \Big)^{-1} \cdot \mathbb E \Big[ \int^{T}_{0} X_{t} \widetilde{X}_{t}{\mathrm d} t + \int^{T}_{0} \widetilde{X}_{t} {\mathrm d} X_{t} \, \Big \vert \, \mathcal F_{T}^{X} \Big] \, 
\end{equation}
as an estimator of $\, u\,$. Evaluation of these conditional expectations in (\ref{eq: MLE}) is a filtering problem. 

\medskip

The detailed study of $\,\widehat{u}\,$ in (\ref{eq: MLE}) still remains an open problem. If we replace $\, \widetilde{X}_{\cdot}\,$ by $\,X_{\cdot}\,$ in (\ref{eq: MLE}), then we obtain a modified estimator 
\begin{equation}
\widehat{u}_{m}\, :=\, \Big( \int^{T}_{0} X_{t}^{2} {\mathrm d} t \Big)^{-1} \cdot \Big(  \int^{T}_{0} X_{t}^{2} {\mathrm d} t + \int^{T}_{0} X_{t} {\mathrm d} X_{t} \Big) \, =\, 1 - \Big( 2 \int^{T}_{0} X_{t}^{2} {\mathrm d} t \Big)^{-1} \big( T - X_{T}^{2} \big) \, .  
\end{equation}
It follows from (\ref{eq: varOUu}) that $\, \lim_{T\to \infty} \widehat{u}_{m} \, =\,  1 - \sqrt{ 1 - u^{2}} \le u \in [0, 1]\,$. Thus this modified estimator $\, \widehat{u}_{m}\,$ underestimates the value $\,u\,$ asymptotically as $\, T \to \infty\,$. 

\bigskip 

Another typical method of estimation of $\,u \,$ is known as the method of moments. We may obtain the method of moments estimator by matching the second moment in the limit, i.e., 
\begin{equation}
\widehat{u}_{M}\, :=\, \Big [ 1 - \Big( \frac{\,2\,}{\,T\,} \int^{T}_{0}X_{t}^{2} {\mathrm d} t \Big)^{-2} \Big]^{1/2} \, . 
\end{equation} 
It follows from (\ref{eq: varOUu}) directly that $\, \lim_{T\to \infty} \widehat{u}_{M} \, =\,  u \in [0, 1]\,$. Thus this method of moments estimator $\, \widehat{u}_{M}\,$ is asymptotically consistent to the value $\,u \in [0, 1] \,$,  as $\, T \to \infty\,$.

\section{Appendix} 
\subsection{Sketch of proof of (\ref{eq: 3.17})} \label{sec: proof1}
We shall sketch the poof of  (\ref{eq: 3.17}) for Proposition \ref{prop: 2.0} when $\,u > 0\,$. If $\,u \, =\, 0\,$, it reduces to the case of propagation of chaos results and it is given in {Sznitman} (1991). First note that by the construction, $\,\overline{X}_{\cdot, i} \,$ in (\ref{eq: 3.4})-(\ref{eq: 3.5}) is determined by the iteration $\, \overline{X}_{\cdot, i} \, =\, {\bm \Phi} (\cdot, (m_{s})_{0 \le s \le \cdot}, (\overline{X}_{s, i+1})_{0 \le s \le \cdot}, (W_{s, i})_{0 \le s \le \cdot} )\,$  as in (\ref{eq: Phi0}), where $\, \overline{X}_{\cdot, i+1}\,$ is independent of $\, W_{\cdot, i}\,$ for $\, i \, =\,  n, n-1, \ldots , 1\,$, that is, with this random iterative map and a slight abuse of notation, we may write and view 
\begin{equation} \label{eq: 5.1} 
\eta_{t, i} \, :=\, \overline{X}_{t, i} \, =\,  {\bm \Phi} \circ {\bm \Phi} \circ \cdots \circ {\bm \Phi}_{t} ( \overline{X}_{\cdot, n+1} ; W_{\cdot, i}, \ldots , W_{\cdot, n}) \, =\, {\bm \Phi}_{t}^{(n+1-i)} ( \eta_{n+1}; W_{\cdot, i}, \ldots , W_{\cdot, n})  \, 
\end{equation}
for $\, 0 \le t \le T\,$ as an element in the space $\,C([0, T], \mathbb R) \, =\,  C([0, T]) \,$ of continuous functions. Thus, $\, \eta_{i}\,$, $\, i \, =\,  n+1, n, n-1, \ldots , 1\,$ possess a discrete-time Markov chain structure. In particular, for $\,j  < k < i \,$, given $\, \eta_{k}\,$, the distribution of $\, \eta_{i}\,$ and $\,\eta_{j}\,$ are conditionally independent. 

Let us write $\, {\bm W} \, :=\, (W_{\cdot, 1}, \ldots , W_{\cdot, n})\,$ for simplicity.   
For every Lipschitz function $\, \varphi (\cdot) \,$ with Lipschitz constant $\,K\,$, there exists a constant $\,c > 0 \,$ such that the difference between the conditional expectation of $\, \varphi ( \overline{X}_{t, 1}) \,$, given $\, \overline{X}_{s, n+1}\,$, $\, 0 \le s \le T\,$ and the unconditional expectation of $\, \varphi ( \overline{X}_{t, 1}) \,$ is bounded by 
\begin{equation*}
\begin{split}
& \mathbb E [ \sup_{0 \le t \le T} \lvert \mathbb E [ \varphi ( \overline{X}_{t,1}) \vert \overline{X}_{s,n+1} , 0 \le s \le T] - \mathbb E [ \varphi ( \overline{X}_{t,1}) ] \rvert ^{2} \big ] \\
\, &=\,  \int_{C([0, T])} \sup_{0 \le t \le T} \Big \lvert \int_{C([0, T])} \Big( \mathbb E^{\bm W} [ \varphi ( {\bm \Phi}_{t}^{(n)}(\eta_{n+1}; {\bm W}_{\cdot}))]  -  \mathbb E^{\bm W} [ \varphi ( {\bm \Phi}_{t}^{(n)}( \widetilde{\eta}_{n+1}; {\bm W}_{\cdot})) ] \Big)  \mathrm m( {\mathrm d} \widetilde{\eta}_{n+1}) \Big \rvert^{2} \mathrm m ( {\mathrm d} \eta_{n+1})  \\
& \le  \int_{C([0, T])^{2}} \sup_{0 \le t \le T} \mathbb E^{\bm W}[ \lvert \varphi ( {\bm \Phi}_{t}^{(n)} (\eta_{n+1}; {\bm W}) ) - \varphi ( {\bm \Phi}_{t}^{(n)} ( \widetilde{\eta}_{n+1}; {\bm W}) )  \rvert^{2} ] \mathrm m ( {\mathrm d} \widetilde{\eta}_{n+1}) \mathrm m ({\mathrm d} \eta_{n+1}) \\
& \le K^{2} \int_{C([0, T])^{2}} \mathbb E^{\bm W} [ \sup_{0 \le t \le T} \lvert {\bm \Phi}_{t}^{(n)} (\eta_{n+1}) -  {\bm \Phi}_{t}^{(n)} ( \widetilde{\eta}_{n+1}) \rvert^{2} ] \mathrm m ( {\mathrm d} \widetilde{\eta}_{n+1}) \mathrm m ( {\mathrm d} \eta_{n+1}) \le  \frac{\,c^{n}\,}{\,n!\,} \, , 
\end{split}
\end{equation*}
where $\,\mathbb E^{\bm W}\,$ is the expectation with respect to $\,{\bm W}\,$ and the last inequality is verified in a similar way as in the proof of Proposition \ref{prop: 1}, thanks to the Lipschitz continuity (\ref{eq: Lip}) of the functional $\, b(\cdot)\,$. Similarly, there exists a constant $\, c > 0 \,$ such that we have the estimate 
\begin{equation} \label{eq: 5.2} 
\mathbb E \Big [ \sup_{0 \le t \le T, \, x \in \mathbb R} \Big \lvert \mathbb E [ \varphi (x,  \overline{X}_{t,j}) \vert \overline{X}_{s,k}, 0 \le s \le T ] - \mathbb E [ \varphi (x,  \overline{X}_{t,j}) ] \Big \rvert ^{2} \Big ] \le \frac{c^{k-j}}{(k -j)! }\, ; \quad k > j \, 
\end{equation}
for a Lipschitz function $\, \varphi (x, y) : \mathbb R^{2} \to \mathbb R\,$ with $\, \lvert \varphi (x_{1}, y_{1}) - \varphi (x_{2}, y_{2}) \rvert \le K (\lvert x_{1} - x_{2}\rvert + \lvert y_{1} - y_{2} \rvert )\,$. 

Second, note that because of the definition of $\, \overline{b}(\cdot, \cdot, \cdot) \,$ appeared in (\ref{eq: 3.17}), for every $\, j \, =\,  1, \ldots , n \,$,  
\[
\mathbb E [ \overline{b} (s, x, \overline{X}_{s, j})] \, =\, \int_{\mathbb R} \widetilde{b}(s, x, z) \mathrm m_{s} ( {\mathrm d} z) - \int_{\mathbb R} \widetilde{b}(s, x, y) \mathrm m_{s} ( {\mathrm d} y) \, =\,  0 \, ; \quad s \ge 0, \, \, x \in \mathbb R \, . 
\]
Combining this observation and the Markov chain structure with (\ref{eq: 5.2}), for $\, j < k < i \,$ we evaluate 
\begin{equation} \label{eq: 5.3}
\begin{split}
& \mathbb E [ \overline{b}(s, \overline{X}_{s,i} , \overline{X}_{s,j} ) \overline{b} (s, \overline{X}_{s,i} , \overline{X}_{s, k} ) ] \\
& \, =\,  \mathbb E [ \overline{b}(s, \overline{X}_{s, i}, \overline{X}_{s,k} ) \mathbb E [ \overline{b}(s, \overline{X}_{s,i}, \overline{X}_{s,j} ) \vert \overline{X}_{\cdot, i}, \overline{X}_{\cdot , k} ] ]  \\
& \, = \,  \mathbb E [ \overline{b}(s, \overline{X}_{s, i}, \overline{X}_{s,k} ) \mathbb E [ \overline{b}(s, x, \overline{X}_{s,j} ) \vert \overline{X}_{\cdot, i}, \overline{X}_{\cdot , k} ] \big \vert_{x = \overline{X}_{s,i}} ] \\
& \, =\, \mathbb E [ \overline{b}(s, \overline{X}_{s, i}, \overline{X}_{s,k} ) \mathbb E [ \overline{b}(s, x, \overline{X}_{s,j} ) \vert \overline{X}_{\cdot , k} ] \big \vert_{x = \overline{X}_{s,i}} ] \\
& \le (\mathbb E [ \lvert \overline{b}(s, \overline{X}_{s, i}, \overline{X}_{s,k} ) \rvert^{2} ] )^{1/2} \cdot ( \mathbb E [ \lvert \mathbb E [ \overline{b} (s, x, \overline{X}_{s,j}) \vert \overline{X}_{\cdot, k} ] \vert_{x = \overline{X}_{s,i}}   \rvert^{2} ])^{1/2} \le C \cdot \Big[ \frac{\,c^{k-j}\,}{\,(k-j)!\,} \Big]^{1/2}\, ,   
\end{split}
\end{equation}
where the constant $\,c\,$ does not depend on $\,(s, i, j, k) \,$  and we used the Lipschitz continuity of $\, b (\cdot)\,$ and a similar technique as in the proof of Proposition \ref{prop: 1.2} to show $\, \sup_{0\le s\le T}(\mathbb E [ \lvert \overline{b}(s, \overline{X}_{s, i}, \overline{X}_{s,k} ) \rvert^{2} ] )^{1/2}  \le C\,$ for some constant $\,C\,$ which does not depend on $\,(i,k)\,$. This is the case $\, 1\le j < k < i \le n\,$. 

For the case $\, i < j < k \,$ or the case $\, j < i < k\,$ we need the estimates  
\begin{equation} \label{eq: 5.4} 
\mathbb E \Big [ \sup_{0 \le t \le T, \, x \in \mathbb R} \Big \lvert \mathbb E [ \varphi (x,  \overline{X}_{t,k}) \vert \overline{X}_{s,j}, 0 \le s \le T ] - \mathbb E [ \varphi (x,  \overline{X}_{t,k}) ] \Big \rvert ^{2} \Big ] \le  \Big[ \frac{c^{k-j}}{(k -j)! }\Big]\, ; \quad k > j \, . 
\end{equation}
This is similar to (\ref{eq: 5.2}) but the condition in the conditional expectation is reverse in discrete-time. We shall construct time-reversal of the discrete-time Markov chain structure (\ref{eq: 5.1}). To do so, as in Proposition \ref{prop: 1}, given the marginal law $\, \mathrm m (\cdot) \, =\,  \mathrm m^{\ast} (\cdot) \,$ with the marginal density function $\, m_{t}: \mathbb R \to \mathbb R_{+}\,$ at time $\, t \ge 0\,$ in the assumptions (\ref{ineq: bm})-(\ref{ineq: gc}) of Proposition \ref{prop: 2.0},  let us consider the following system of the directed chain stochastic equation with mean-field interaction for $\, (Y_{\cdot}, X_{\cdot}, \widetilde{X}_{\cdot})\,$: 
\begin{equation} \label{eq: sNDu} 
\begin{split}
{\mathrm d} X_{t} \, &=\,   \Big[ u \widetilde{b}(t, X_{t}, \widetilde{X}_{t}) + (1-u) \int_{\mathbb R} \widetilde{b}(t, X_{t}, z)  \widehat{{\mathrm m}}_{t} ({\mathrm d} z) \Big] {\mathrm d} t + {\mathrm d} {B}_{t} \, , 
\\
{\mathrm d} Y_{t} \, &=\,   \Big[ u \widetilde{b}(t, Y_{t}, X_{t}) \cdot \frac{\,m_{t}(Y_{t}) \,}{\,m_{t}(X_{t})\,} + (1-u) \int_{\mathbb R} \widetilde{b}(t, Y_{t}, z)  \widehat{{\mathrm m}}_{t} ({\mathrm d} z) \Big] {\mathrm d} t + {\mathrm d} \widehat{B}_{t} \,  
\end{split}
\end{equation}
driven by independent Brownian motions $\, (B_{\cdot}, \widehat{B}_{\cdot})\,$, where we assume the distributional constraints 
\begin{equation} \label{eq: sNDu1}
\begin{split}
& \text{Law} ( \widetilde{X}_{\cdot}) \, =\, \text{Law} ( X_{\cdot}) \, =\,  \text{Law} ( Y_{\cdot}) \, , \\
&  \text{Law} (X_{0}, \widetilde{X}_{0}, Y_{0}) \, =\,  \text{Law} (X_{0}) \otimes \text{Law} ( \widetilde{X}_{0}) \otimes \text{Law} ( Y_{0}) \, , 
\end{split} 
\end{equation}
and $\, \widehat{ \mathrm m}_{t}(\cdot) \,$ is the marginal law, i.e., 
\begin{equation} \label{eq: sNDu2}
\widehat{\mathrm m}_{t} \, =\,  \text{Law} ( Y_{t}) \, =\,  \text{Law} ( X_{t}) \, =\, \text{Law} ( \widetilde{X}_{t}) \, ; \quad t \ge 0 \, , 
\end{equation}
with the independence relations, similar to (\ref{eq: NDlaw}),  
\begin{equation} \label{eq: sNDu3} 
{\bm \sigma } (\widetilde{X}_{t}, t \ge 0 ) \perp\!\!\!\perp {\bm \sigma } ( B_{t}, t \ge 0 ) \, , \quad {\bm \sigma } ((\widetilde{X}_{t}, X_{t}) , t \ge 0 ) \perp\!\!\!\perp {\bm \sigma } ( \widehat{B}_{t}, t \ge 0 ) \, .  
\end{equation}

We claim that the conditional distribution of $\, Y_{t}\,$, given $\,X_{t}\,$, is the same as the conditional distribution of $\, \widetilde{X}_{t}\,$, given $\,  X_{t}\, $, for every $\, t \ge 0  \,$, i.e., 
\begin{equation}\label{eq: NDclaw}
\text{Conditional Law} ( Y_{t} \, \vert \, X_{t}) \, =\, \text{Conditional Law} ( \widetilde{X}_{t}\, \vert \, X_{t}) \, ; \quad t \ge 0 \, . 
\end{equation}
with the condition 
\begin{equation} \label{eq: NDlaw5.6}
\, {\mathrm m}_{t} \, =\, \text{Law}(X_{t}) \, \equiv\,  \text{Law} ( \widetilde{X}_{t})   \, \equiv \,  \text{Law}(Y_{t}) \, =\, \widehat{\mathrm m}_{t} \, ; \quad t \ge 0 \, .   
\end{equation}

Indeed, thanks to (\ref{ineq: bm})-(\ref{ineq: gc}) and the fixed point argument, by some appropriate changes in the proof of Proposition \ref{prop: 1}, the weak solution $\,( Y_{\cdot}, X_{\cdot}, \widetilde{X}_{\cdot}) \,$ to (\ref{eq: sNDu}) exists with the constraints (\ref{eq: sNDu1})-(\ref{eq: sNDu3}), and its joint law and marginal laws are uniquely  determined. Since the couple $\, (X_{\cdot}, \widetilde{X}_{\cdot}) \,$ also solves the first equation in (\ref{eq: NDu}), it follows from the construction of the system (\ref{eq: sNDu}) and the uniqueness of (marginal) law in Proposition \ref{prop: 1} that $\, \text{Law} (\widetilde{X}_{\cdot}) \, =\,  \text{Law} (X_{\cdot})\,$ with the marginal $\, {\mathrm m} (\cdot) \, =\, {\mathrm m}^{\ast} (\cdot)\,$ and its marginal density $\, m_{t}\,$, $\, t \ge 0\,$. Thus we obtain (\ref{eq: NDlaw5.6}). Moreover, as in Proposition \ref{prop: 1.4}, its joint distribution $\, {\mathrm M}_{\cdot}\,$ of $\, (X_{\cdot}, \widetilde{X}_{\cdot})\,$ satisfies the integral equation (\ref{eq: IntEq}) with (\ref{eq: Ainfinty}). Similarly, the joint distribution $\, \widehat{\mathrm M}_{\cdot} \,$ of $\,(Y_{\cdot}, X_{\cdot})\,$ satisfies the integral equation 
\begin{equation} \label{eq: 5.7}
\int_{\mathbb R} g(x)  {\mathrm m}_{t}( {\mathrm d} x) \, =\,   \int_{\mathbb R} g(x) {\mathrm m}_{0}( {\mathrm d} x) + \int^{t}_{0}[ \widehat{\mathcal A}_{s}( \widehat{\mathrm M}) g] \, {\mathrm d}s \, ; \quad 0 \le t \le T \, ,  
\end{equation}
similar to (\ref{eq: IntEq}), for every test function $\,g \in C^{2}_{c}(\mathbb R)\,$, where 
\[
\widehat{\mathcal A}_{s}( \widehat{\mathrm M}) g \, :=\, u \int_{\mathbb R^{2}} \widetilde{b}(s, y_{1}, y_{2}) \cdot \frac{\,m_{s}(y_{1}) \,}{\,m_{s}(y_{2})\,} g^{\prime}(y_{1}) \widehat{\mathrm M}_{s}( {\mathrm d} y_{1} {\mathrm d} y_{2}) + (1-u) \int_{\mathbb R^{2}} \widetilde{b}(s, y_{1}, y_{2}) g^{\prime}(y_{1}) \mathrm m_{s}( {\mathrm d} y_{1}) \mathrm m_{s} ( {\mathrm d} y_{2}) 
\]
\begin{equation} \label{eq: Ainfinty5.8}
\quad {} + \frac{\,1\,}{\,2\,} \int_{\mathbb R} g^{\prime\prime}(y_{1}) \mathrm m_{s}( {\mathrm d} y_{1}) \, ; \quad 0 \le s \le T \, .  
\end{equation}
The uniqueness of solution to this integral equation (\ref{eq: 5.7}) may be shown as in Lemma 10 of {Oelschl\"ager} (1984). Thus, comparing (\ref{eq: IntEq}) with (\ref{eq: 5.7}), we obtain (\ref{eq: NDclaw}) from (\ref{eq: NDlaw5.6}) and the time-reversible relation 
\[
\,  {\, m_{s}(y_{1}) \,}\widehat{ \mathrm M}_{s}( {\mathrm d}y_{1} {\mathrm d}y_{2}) \, =\, {\,m_{s}(y_{2})\,}  \mathrm M_{s} ( {\mathrm d} y_{1} {\mathrm d} y_{2}) \, ; \quad 0 \le s \le T \, , \, \, (y_{1}, y_{2}) \in \mathbb R^{2} \, . 
\]

Thus, thanks again to the Lipschitz continuity (\ref{ineq: bm}) and linear growth condition (\ref{ineq: gc}), repeating the derivation of (\ref{eq: 5.2}) but now with this reversed discrete-time Markov chain relationship (\ref{eq: NDclaw}), we obtain (\ref{eq: 5.4}). Hence both  for the cases $\, j < i < k\,$ and $\, i <  j < k \,$ there exist constants $\, c\,$ and $\,C\,$ such that 
\[
\mathbb E [ \overline{b}(s, \overline{X}_{s,i} , \overline{X}_{s,j} ) \overline{b} (s, \overline{X}_{s,i} , \overline{X}_{s, k} ) ]  \le C \cdot \Big[ \frac{\,c^{k-j}\,}{\,(k-j)!\,}\Big]^{1/2} \, . 
\]

Therefore, we conclude (\ref{eq: 3.17}), because there exist constants $\, c, C > 0\,$ such that 
\[
\frac{1}{\, n\, }\sum_{j=1}^{n} \sum_{k=1}^{n} \mathbb E [ \overline{b}(s, \overline{X}_{s,i} , \overline{X}_{s,j} ) \overline{b} (s, \overline{X}_{s,i} , \overline{X}_{s, k} ) ]  \le \frac{2 C}{n} \sum_{j=1}^{n} \sum_{k=j}^{n} \Big[ \frac{\,c^{k-j}\,}{\,(k-j)!\,}\Big]^{1/2}  \le  2C \sum_{k=0}^{\infty} \Big[ \frac{\,c^{k}\,}{\,k!\,} \Big]^{1/2} < + \infty \,  . 
\]

\section*{Acknowledgements} We wish to thank Professors E. Bayraktar, I. Karatzas, K. Kardaras, H. Kawabi, K.  Ramanan, M. Shkolnikov, H. Tanemura and Z. Zhang for several helpful discussions in the early stage of this paper. We are thankful to Drs. E. R. Fernholz and I. Karatzas for kindly sharing with us their unpublished manuscripts on the Gaussian cascades. We also appreciate the editors and the reviewers comments and suggestions. Research is supported in part by the National Science Foundation under grants NSF-DMS-1409434 and NSF-DMS-1814091 for the second author and under grants NSF-DMS-13-13373 and NSF-DMS-16-15229 for the third author.

\bigskip 

\section*{Bibliography}

\noindent \textsc{Athreya, S.R., Bass, R.F. \& Perkins, E.A.} (2005) {H\"older norm estimates for elliptic operators on finite and infinite-dimensional spaces.} {\it Trans. Amer. Math. Soc.} {\bf 357} 5001--5029. 

\medskip 

\noindent \textsc{Bain, A. \& Crisan, D.} (2009) {\it Fundamentals of stochastic filtering}. Stochastic modeling and applied probability {\bf 60}, Springer.  

\medskip 

\noindent \textsc{Batt, A.G., Kallianpur, G., Karandikar, R.L. \& Xiong, J.} (1998) {On interacting systems of Hilbert-space-valued differential equations}. {\it Appl. Math. Optim.} {\bf 37} 151--188. 

\medskip 

\noindent \textsc{Bayraktar, E., Cosso, A. \& Pham, H.} (2018) {Randomized dynamic programming principle and Feynman-Kac representation for optimal control of McKean-Vlasov dynamics.} {\it Trans. Amer. Math. Soc.} {\bf 370} 2115--2160. 

\medskip 

\noindent \textsc{Bolley, F., Guillin, A. \& Malrieu, F.} (2010) {Trend to equilibrium and particle approximation for a weakly self consistent Vlasov-Fokker-Planck equation.} {\it ESAIM: M2AN} {\bf 44} 867--884. 

\medskip

\noindent \textsc{Carmona, R., Fouque, J.-P., \& Sun, L.-H.} (2015) {
Mean Field Games and Systemic Risk}.
{\it Comm. in Math. Sci.}, {\bf 13}(4) 911-933.
\medskip 

\noindent \textsc{Chong, C. \& Kl\"uppelberg, C.} (2017) {\it Partial mean field limits in heterogeneous networks}. {\it Preprint} ArXiv:1507.01905. 

\medskip 

\noindent \textsc{Da Prato, G. \& Zabczyk, J.} (1992) {\it Stochastic Equations in Infinite Dimensions}. Cambridge,
UK: Cambridge University Press.

\medskip 

\noindent \textsc{Dawson, D.A.} (1972) {Stochastic evolution equations}. Math. Biosci. {\bf 15} 287--316. 

\medskip 

\noindent \textsc{Ethier, S.N. \& Kurtz, T.G.} (1985) {\it Markov processes: Characterization and Convergence}. Wiley, New York. 

\medskip 

\noindent \textsc{Friedman, D.} (1971) {\it Markov Chains.} Holden-Day, San Francisco. 

\medskip 

\noindent \textsc{Funaki, T.} (1984) {A certain class of diffusion processes associated with nonlinear parabolic equations.} {\it Z. Wahrscheinlichkeitstheorie verw. Gebiete} {\bf 67} {331-348}. 

\medskip

\noindent \textsc{Graham, C.} (1992) {McKean-Vlasov Ito-Skorohod equations, and nonlinear diffusions with discrete jump sets}. {\it Stochastic Process. Appl.} {\bf 40} 69-82.

\medskip 

\noindent \textsc{Graham, C. \& M\'el\'eard, S.} (1997)  {Stochastic particle approximations for generalized Boltzmann models and convergence estimates.} {\it Ann. Probab.} {\bf 25}, 115-132. 
\medskip 

\noindent \textsc{Kac, M.} (1956) {Foundation of kinetic theory}. In {\it Proceedings of the Third Berkeley Symposium on Mathematical Statistics and Probability, 1954-1955}, vol {\bf III} Berkeley and Los Angeles, University of California Press.

\medskip

\noindent \textsc{Kallianpur, G. \& Xiong, J. } (1995) {Stochastic differential equations in infinite dimensions.} {\it Lecture Notes-Monograph Series} {\bf 26} Institute of Mathematical Statistics.

\medskip 

 \noindent    \textsc{Karatzas, I. \& Shreve, S.E.}  (1991)  {\it Brownian Motion and Stochastic Calculus.}   Springer-Verlag, NY.

\medskip 

\noindent \textsc{Kolokoltsov, V.N.} (2010) {\it Nonlinear Markov processes and kinetic equations} Cambridge Tracts in Mathematics {\bf 182}, Cambridge University Press, Cambridge. 

%

%

\medskip 

\noindent \textsc{Lacker, D., Ramanan, K. \& Wu, R.} (2019) Large sparse networks of interacting diffusions. {\it Preprint} ArXiv:1904.02585. 

\medskip

\noindent \textsc{Liptser, R.S. \& Shiryayev, A.N.} (2001) {\it Statistics of Random Processes. I \& II.} Applications of Mathematics Stochastic Modeling and Applied Probability {\bf 5-6},  Springer, New York. 

\medskip 

\noindent \textsc{McKean, JR., H.P.} (1967) {An exponential formula for solving Boltzmann's equation for a Maxwellian gas.} {\it Journal of Combinatorial Theory} {\bf 2} 258--372. 

\medskip 

\noindent \textsc{M\'el\'eard, S.} (1995) {Asymptotic behavior of some interacting particle systems; McKean-Vlasov and Boltzmann models.}  In: Probabilistic Models for Nonlinear Partial Differential Equations, Montecatini Terme, {\it Lecture Notes in Mathematics} {\bf 1627} 42--95. 

\medskip

\noindent \textsc{Mischler, S. \& Mouhot, C.} (2013) {Kac's program in kinetic theory} {\it Inventiones Matematicae} {\bf 193} 1-147.  

\medskip 

\noindent \textsc{Mischler, S., Mouhot, C. \& Wennberg, B.} (2015) {A new approach to quantitative propagation of chaos for drift, diffusion and jump processes.} {\it Probab. Theory Relat. Fields} {\bf 161} 1--59.   

\medskip 

\noindent \textsc{Oelschl\"ager, K.} (1984) {A Martingale approach to the law of large numbers for weakly interacting stochastic processes}. {\it Ann. Probab.} {\bf 2} 458--479. 

\medskip 

\noindent \textsc{Sznitman, A.S.} (1984) {Nonlinear reflecting diffusion processes, and the propagation of chaos and fluctuations associated.} {\it J. Funct. Anal.} {\bf 56} 311-336. 

\medskip

\noindent \textsc{Sznitman, A.S.} (1991) {Topics in propagation of chaos}. \'Ecole d'\'Et\'e de Probabilit\'es de Saint-Flour XIX -- 1989 In {Lecture Notes in Math.} {\bf 1464} 165--251, Springer Berlin.  

\medskip 

\noindent \textsc{Tanaka, H.} (1978) {Probabilistic treatment of the Boltzmann equation of Maxwellian molecules.} {\it Z. Wahrsch. Verw. Gebiete} {\bf 46} 67--105. 
\end{document}